\let\oldsqrt\sqrt
\def\sqrt{\mathpalette\DHLhksqrt}
\def\DHLhksqrt#1#2{%
\setbox0=\hbox{$#1\oldsqrt{#2\,}$}\dimen0=\ht0
\advance\dimen0-0.2\ht0
\setbox2=\hbox{\vrule height\ht0 depth -\dimen0}%
{\box0\lower0.4pt\box2}}
\newcommand{\R}{\mathbb{R}} 
\newcommand{\N}{\mathbb{N}} 
\newcommand{\Z}{\mathbb{Z}} 
\newcommand{\dist}{\textnormal{dist}} 
\newcommand{\diam}{\textnormal{diam}} 
\newcommand{\supp}{\textnormal{supp}} 
\newcommand{\essinf}{\textnormal{essinf}} 
\newcommand{\esssup}{\textnormal{esssup}} 
\DeclareMathOperator{\vol}{vol}
\renewcommand{\phi}{\varphi}
\newcommand{\cC}{{\mathcal C}}
\newcommand{\cD}{{\mathcal D}}
\newcommand{\cE}{{\mathcal E}}
\newcommand{\cH}{{\mathcal H}}
\newcommand{\cV}{{\mathcal V}}
\newcommand{\cW}{{\mathcal W}}
\newcommand{\eps}{\varepsilon}
\theoremstyle{definition}
\theoremstyle{plain} 
\newtheorem{defi}{Definition}[section]
\newtheorem{thm}[defi]{Theorem}
\newtheorem{prop}[defi]{Proposition}
\newtheorem{lemma}[defi]{Lemma}
\newtheorem{cor}[defi]{Corollary}
\theoremstyle{definition}
\numberwithin{equation}{section} 
\title{On the strong maximum principle for nonlocal operators}
\author{
 \ Sven Jarohs\footnote{Institut f\"ur Mathematik, Goethe-Universit\"at, Frankfurt, Robert-Mayer-Stra\ss e 10, D-60629 Frankfurt, jarohs@math.uni-frankfurt.de.}
 \;   and
 \!\ Tobias Weth\footnote{Institut f\"ur Mathematik, Goethe-Universit\"at, Frankfurt, Robert-Mayer-Stra\ss e 10, D-60629 Frankfurt, weth@math.uni-frankfurt.de.}}
\date{\today}
\begin{document}
\maketitle

\begin{abstract}
In this paper we derive a strong maximum principle for weak supersolutions of nonlocal equations of the form 
$$
		Iu=c(x) u \qquad \text{ in $\Omega$,}
$$
where $\Omega\subset \R^N$ is a domain, $c\in L^{\infty}(\Omega)$ and $I$ is an operator of the form
\[
Iu(x)=P.V.\int_{\R^N}(u(x)-u(y))j(x-y)\ dy
\]
with a nonnegative kernel function $j$. We formulate minimal positivity assumptions on $j$ corresponding to a class of operators, which includes highly anisotropic variants of the fractional Laplacian. Somewhat surprisingly, this problem leads to the study of general lattices in $\R^N$. Our results extend to the regional variant of the operator $I$ and, under weak additional assumptions, also to the case of $x$-dependent kernel functions.  

\end{abstract}
{\footnotesize
\begin{center}
\textit{Keywords.} Nonlocal Operator $\cdot$ Strong Maximum Principle $\cdot$ Weak Maximum Principle
\end{center}
\begin{center}
\end{center}
}

\section{Introduction}\label{sec:introduction}
In the study of elliptic partial differential equations of second order, one of the most important tools are maximum principles of weak and strong type, as they are intimitely related to the theory of existence, regularity and symmetry of solutions. 
One of the simplest equations, where maximum principles arise, are linear elliptic second order PDEs of the type 
\begin{equation}
  \label{eq:second-order}
-\Delta u =c(x) u \qquad \text{in $\Omega$.}
\end{equation}
Here, $\Omega \subset \R^N$ is a domain and $c \in L^\infty_{loc}(\Omega)$. In this case, a strong maximum principle can be stated as follows: If $u \in H^1_{loc}(\Omega)$, $u \ge 0$ is a weak nontrivial supersolution of \eqref{eq:second-order}, i.e.,
\begin{equation}
  \label{eq:second-order-weak-supersol}
\int_{\Omega} \nabla u \nabla \phi\,dx \ge \int_{\Omega} c(x) u \phi \,dx \qquad \text{for all $\phi \in C_c^\infty(\Omega)$, $\phi \ge 0$,}
\end{equation}
then $u$ is strictly positive in $\Omega$. Here $H^1_{loc}(\Omega)$ denotes the standard local first order Sobolev space. We note that $u$ does not necessarily need to be continuous in $\Omega$; one may consider e.g. $\Omega= B_1(0) \subset \R^N$ and 
the function $x \mapsto -\ln |x|$ which is contained in $H^1(\Omega)$, nonnegative and  weakly superharmonic in $\Omega$ if $N \ge 3$. Hence the strict positivity should be understood in the sense that 
\begin{equation}
  \label{eq:def-strictly-positive}
\underset{K}\essinf \: u >0 \qquad \text{for every compact subset $K \subset \Omega$.}
\end{equation}
For functions satisfying (\ref{eq:second-order-weak-supersol}), the strict positivity follows from the classical Harnack inequality (see e.g. \cite[Theorem 8.18]{GT}), which can be seen as quantitative version of the strong maximum principle. In the present paper we are concerned with maximum principles for weak supersolutions to equations of type
\begin{equation}
  \label{nonlocal-general-0}
		Iu =c(x) u \qquad \text{in $\Omega$,}
\end{equation}
where $c \in L^\infty_{loc}(\Omega)$ and $I$ is a nonlocal operator formally given by 
\begin{equation}
  \label{eq:def-Iu-formal}
Iu(x)=P.V.\int_{\R^N}(u(x)-u(y))j(x-y)\ dy : = \lim_{\eps \to 0^+} 
\int_{\R \setminus B_{\eps}(x)}(u(x)-u(y))j(x-y)\ dy.
\end{equation}
Here, $j: \R^N \to [0,\infty]$ is the associated (nonnegative) kernel function, which typically has a singularity at the origin. Our aim is to formulate optimal conditions on $j$ such that a strong maximum principle holds for weak supersolutions of (\ref{nonlocal-general-0}). It turns out that we only need the following two assumptions. 
\begin{itemize}
\item[(j\,\!1)] {\em (L\'evy type integrability condition)} The kernel $j: \R^N \to [0,\infty]$ is even and measurable with\footnote{Here and in the following we use the notation $a\wedge b=\min\{a,b\}$ for $a,b\in \R$.}
$$
\int \limits_{\R^N} 1 \wedge |z|^2j(z)\, dz <\infty.
$$
\item[(j\,\!2)] {\em (Nontriviality condition)} For every $r>0$, $j$ does not vanish a.e. in $B_r(0)$.
\end{itemize}
In order to motivate these assumptions, let us assume for the moment that $j$ satisfies (j\,\!1), (j\,\!2) and has finite total mass, i.e., 
\begin{equation}
  \label{eq:assumption-finite}
\int_{\R^N}j(z)\,dz < \infty. 
\end{equation}
Moreover, let us consider a nonnegative bounded pointwise supersolution $u:\R^N \to \R$ of equation (\ref{nonlocal-general-0}) in a domain $\Omega \subset \R^N$, 
which then satisfies
\begin{equation}
\label{nonlocal-general-0-pointwise-bounded}  
u(x)\Bigl(c^-(x) + \int_{\R^N}j(z)\ dz \Bigr) \ge \int_{\R^N}u(x+z)j(z)\,dz, \qquad x \in \Omega.
\end{equation}
Here $c^-:= - \min \{c,0\}$ denotes the negative part of $c$. 
Since $j$ is nontrivial by assumption (j\,\!2), $u$ is positive in $x \in \Omega$ iff $\int_{\R^N}u(x+z)j(z)\,dy$ is positive. If we assume in addition that $j$ is strictly positive in $B_r(0)$ for some $r>0$, a continuation argument shows that either $u \equiv 0$ in $\Omega$ or $u > 0$ in $\Omega$. Hence the strong maximum principle readily follows in this case.  For a related result in this context, see e.g. \cite[Theorem 7]{garcia-rossi}. If we merely assume condition (j\,\!2) in place of the strict positivity of $j$ in a neighborhood of zero, the same conclusion is much less clear. We shall see in this paper that indeed a continuation argument can be performed along suitably chosen lattice paths in $\R^N$.

Although it is instructive to consider the case where \eqref{eq:assumption-finite} holds, the equation (\ref{nonlocal-general-0}) becomes much more interesting in the case where $\int_{\R^N}j(z)\,dz = \infty$. In this case, the pointwise inequality (\ref{nonlocal-general-0-pointwise-bounded}) makes no sense anymore, and it is appropriate to consider weak supersolutions of (\ref{nonlocal-general-0}) instead. We note that assumptions (j\,\!1)--(j\,\!2) include different variants of the fractional Laplacian and more general symmetric L\'evy type operators with absolutely continuous L\'evy measure and with vanishing diffusion and drift coefficients, see e.g. \cite[Section 3.3.2]{A09}, \cite[Chapters II. 2 and III.7]{J05} and \cite{fall-weth}. We will discuss these and other examples in detail below.

In order to define the notion of a weak supersolution of (\ref{nonlocal-general-0}), we consider the 
associated bilinear form
\begin{equation}\label{form-j}
(u,v) \mapsto \cE_{j}(u,v)=\frac{1}{2}\int_{\R^N}\int_{\R^N}(u(x)-u(y))(v(x)-v(y))j(x-y)\ dxdy
\end{equation}
and the function space 
\begin{equation}
\label{def-V-J-loc}
\cV^j_{loc}(\Omega):=\Bigl\{u\in L^2_{loc}(\R^N) :\! \int_{\Omega'}\int_{\R^N} (u(x)-u(y))^2j(x-y)\ dxdy<\infty \; \text{for every $\Omega' \subset \subset \Omega$}\Bigr\}.
\end{equation}
Here and in the following, $\Omega' \subset \subset \Omega$ means that $\overline {\Omega'}$ is compact and contained in $\Omega$. The use of this space is inspired by \cite{DK16,FKV13}. We remark that $\cV^j_{loc}(\Omega)$ contains all functions, which are bounded on $\R^N$ and locally Lipschitz in $\Omega$, see Lemma~\ref{Lipschitz-V-J} below. Moreover, in the special case where $j(z)=|z|^{-N-2s}$ with some $s \in (0,1)$, the space $\cV^j_{loc}(\Omega)$ contains all functions $u \in L^2_{loc}(\R^N)$ with 
$$
\int_{\R^N} \frac{u^2(x)}{1+|x|^{N+2s}}\,dx < \infty \qquad \text{and}\qquad u\big|_{\Omega} \in H^s_{loc}(\Omega),
$$
where $H^s_{loc}(\Omega)$ is the usual local Sobolev space of order $s$, see e.g. \cite{grisvard}. We call a function $u \in \cV^j_{loc}(\Omega)$ a weak supersolution of (\ref{nonlocal-general-0}) in $\Omega$, if 
 \begin{equation}
   \label{eq:assumption-supersol-variant-1}
\cE_{j}(u,\phi) \ge \int_{\Omega} c(x) u \phi \,dx \qquad \text{for all $\phi \in C^\infty_c(\Omega)$, $\:\phi \ge 0$.}
 \end{equation}
We shall see in Section~\ref{notation} that $\cE_j(u,\phi)$ is indeed well-defined (in the Lebesgue sense) if $u \in \cV^j_{loc}(\Omega)$ and $\phi \in C^{\infty}_c(\Omega)$. Our main result is the following. 

	\begin{thm}\label{main-theorem} {(Strong maximum principle)}\\[0.1cm]
Suppose (j\,\!1) and (j\,\!2), let $\Omega\subset\R^{N}$ be a domain, and let $c \in L^\infty_{loc}(\Omega)$. Moreover, let $u \in \cV^j_{loc}(\Omega)$ be a weak supersolution of (\ref{nonlocal-general-0}) in $\Omega$ with $u \ge 0$ on $\R^N$.\\[0.2cm]
Then either $u \equiv 0$ in $\Omega$, or $u$ is strictly positive in $\Omega$.
	\end{thm}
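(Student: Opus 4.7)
The plan is a continuation argument in $\Omega$ driven by a vanishing-propagation lemma and a Steinhaus-type lattice construction that exploits the nondegeneracy condition $(j\,\!2)$ together with the evenness of $j$. Throughout, write $P := \{z \in \R^N : j(z) > 0\}$; by $(j\,\!2)$ we have $|P \cap B_r(0)| > 0$ for every $r > 0$, and $P = -P$ by evenness.

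First I would prove a propagation lemma: if $\omega \subset \Omega$ is a nonempty open set on which $u = 0$ a.e., then for almost every $z \in P$, $u = 0$ a.e.\ on the translate $\omega - z$. Indeed, for nonnegative $\phi \in C_c^\infty(\omega)$ the right-hand side of (\ref{eq:assumption-supersol-variant-1}) vanishes (as $u \equiv 0$ on $\supp\phi$), so $\cE_j(u,\phi) \ge 0$; on the other hand, using $u \equiv 0$ on $\supp\phi$ in the bilinear form (\ref{form-j}) and symmetrizing via the evenness of $j$ collapses it to
\[
\cE_j(u,\phi) \;=\; -\int_\omega \phi(x)\int_{\R^N\setminus\omega} u(y)\, j(x-y)\, dy\, dx \;\le\; 0,
\]
forcing equality. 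Varying $\phi$ and applying Fubini in the variable $z = x - y$ yields the claim.

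Next I would introduce the open set
\[
V \;:=\; \{x \in \Omega : u = 0 \text{ a.e.\ on some ball } B_r(x) \subset \Omega\}
\]
and show it is also relatively closed in $\Omega$ by an iterated lattice-path argument. Given $x_0 \in \overline V \cap \Omega$, fix a ball $B_{r_0}(x_0) \subset \Omega$ and choose $y \in V$ close to $x_0$ with a radius $r_y > 0$ such that $u = 0$ a.e.\ on $B_{r_y}(y) \subset \Omega$. By $(j\,\!2)$, the symmetric set $P^* := P \cap B_{r_y/2}(0)$ has positive Lebesgue measure, so by Steinhaus's theorem $P^* + P^* \supset B_\rho(0)$ for some $\rho > 0$. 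Iterating the propagation lemma with increments in $P^*$ yields $u = 0$ a.e.\ on the shifted ball $B_{r_y}(y - (z_1 + \cdots + z_k))$ for almost every $(z_1, \ldots, z_k) \in (P^*)^k$; since the $k$-fold Minkowski sum of $P^*$ contains $B_{k\rho/2}(0)$, for $k$ sufficiently large the union of these shifted balls engulfs $x_0$. Choosing $y$ close enough to $x_0$ (possible since $x_0 \in \overline V$) keeps all intermediate balls inside $B_{r_0}(x_0) \subset \Omega$, so the iteration is legitimate and $x_0 \in V$.

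By connectedness of $\Omega$ one concludes $V \in \{\emptyset, \Omega\}$: the case $V = \Omega$ yields $u = 0$ a.e.\ on $\Omega$ (cover $\Omega$ by balls witnessing membership in $V$), giving the first alternative. The case $V = \emptyset$ must be upgraded to strict positivity in the essinf sense (\ref{eq:def-strictly-positive}); this relies on a weak Harnack-type inequality for nonnegative weak supersolutions of (\ref{nonlocal-general-0}) (presumably established elsewhere in the paper via a De Giorgi/Moser-type iteration adapted to the bilinear form $\cE_j$ under the minimal assumptions $(j\,\!1)$--$(j\,\!2)$), which forces any compact $K \subset \Omega$ with $\essinf_K u = 0$ to produce an open ball on which $u$ vanishes a.e., contradicting $V = \emptyset$. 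The principal obstacle in this plan is the closedness step: calibrating the iteration count $k$ and the step sizes in $P^*$ so that the lattice path from $y$ to $x_0$ both reaches $x_0$ via Steinhaus-covered Minkowski sums and stays inside $\Omega$ — a balance made possible by the freedom to choose $y$ arbitrarily close to $x_0$ together with the strict positivity of $\rho$ guaranteed by $(j\,\!2)$.
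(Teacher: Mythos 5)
Your propagation-of-zeros lemma and the Steinhaus/Minkowski-sum continuation are plausible in themselves (modulo two technical points you gloss over: the order of quantifiers between the radius $r_y$ of the vanishing ball and the Steinhaus radius $\rho$, which depends on $r_y$, and the fact that the propagation lemma only holds for a.e.\ $z$, so hitting a prescribed sum $z_1+\dots+z_k$ requires an open target plus a Fubini selection argument). They would yield the dichotomy ``$u\equiv 0$ in $\Omega$, or $u$ does not vanish a.e.\ on any ball $B\subset\Omega$''. But that is strictly weaker than the theorem: strict positivity here means $\essinf_K u>0$ for every compact $K\subset\Omega$, and a function which is positive only on a dense set of small measure in each ball satisfies your ``$V=\emptyset$'' alternative while having $\essinf_{B} u=0$ on every ball. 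The genuine gap is the upgrade step: you invoke a weak Harnack inequality for nonnegative weak supersolutions ``presumably established elsewhere in the paper''. No such inequality is proved in the paper, and it is not available under (j\,1)--(j\,2) alone: the paper explicitly contrasts its hypotheses with those of Dyda--Kassmann, for whom the lower bound (\ref{eq:two-sided-ineq}) on the kernel is of key importance for a Harnack inequality, and stresses that Theorem~\ref{main-theorem} dispenses with it (for zero-order kernels with an arbitrary symmetric set $A$ even continuity of solutions is stated as open). So the quantitative heart of the proof is missing and cannot be borrowed.

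The paper closes exactly this gap by propagating positivity rather than vanishing: starting from a measurable set $K$ with $\essinf_K u>0$ (which exists once $u\not\equiv0$), it uses the small-volume weak maximum principle (Theorem~\ref{weak-max-small-volume}, built on Proposition~\ref{wmp1} and the eigenvalue asymptotics (\ref{blowup})) together with an explicit barrier $w=f+a1_K$ encoded in the subsolution property (SSP) (Lemmas~\ref{SSP-basic} and \ref{positivity-inherit}) to transfer a uniform lower bound from $K$ to nearby balls, and then iterates along lattice paths built from $N$ linearly independent vectors in a positive-measure subset of $\{j\ge\delta\}$ (Lemmas~\ref{subsolutionstart}, \ref{lattice-intersection}, \ref{lattice-path}); an open/closed argument in the connected set $\Omega$, plus the density result Proposition~\ref{density} to pass from $C^\infty_c$-tested supersolutions to variational ones, then gives the $\essinf$ statement directly. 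Your Steinhaus device could serve as an alternative to the lattice-path lemma for the purely geometric continuation, but without a comparison/weak-maximum-principle mechanism producing quantitative lower bounds, the proof of the stated theorem does not go through.
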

Here and in the following, a measurable function $u$ on $\Omega$ will be called strictly positive in $\Omega$ if (\ref{eq:def-strictly-positive}) holds. As indicated already, a notable feature of Theorem~\ref{main-theorem} is the weak positivity assumption (j\,\!2) which allows to consider highly anisotropic kernels $j$. We illuminate this aspect by discussing kernels of the form
\begin{equation}
  \label{eq:example-j}
z \mapsto j(z)= 1_{A}(z) |z|^{\tau},
\end{equation}
where $1_A$ is the characteristic function of a symmetric measurable subset $A \subset \R^N$ and $\tau \in \R$. In this case, assumption (j\,\!2) amounts to the condition 
\begin{equation}
  \label{eq:j-2-example}
|A \cap B_r(0)| >0 \qquad \text{for every $r>0$,}  
\end{equation}
where $|\cdot|$ stands for Lebesgue measure. Moreover, for this type of kernels,
we may write condition (j\,\!1) as follows: 
\begin{equation}
  \label{eq:j-1-example}
\int_0^1 r^{\tau + 2} \vol_{N-1}(A \cap S_r)\,dr + 
\int_1^\infty r^{\tau} \vol_{N-1}(A \cap S_r)\,dr < \infty,
\end{equation}
Here $S_r$ denotes the sphere of radius $r>0$ centered at $0$. In the case where $A = \R^N$ and 
\begin{equation}
  \label{eq:assumption-tau-s}
\text{$\tau = -N -2s\quad$ for some $s \in (0,1)$,}  
\end{equation}
the operator $I$ coincides up to a constant with the fractional Laplacian $(-\Delta)^s$, which has been studied extensively in recent years.
A probabilistic argument yielding the strong maximum principle for supersolutions of the equation $(-\Delta)^s u= c(x) u$ can be found in \cite[p.312--313]{BB00}. Moreover, in \cite[Proposition 2.7]{CRS10}) and \cite[Section 4.6]{CS14} the representation of $(-\Delta)^s$ as a Dirichlet-to-Neumann type operator is used to derive a strong maximum principle.
\\
Anisotropic versions of the fractional Laplacian arise when considering  (\ref{eq:example-j}) and (\ref{eq:assumption-tau-s}) for a general symmetric measurable subset $A \subset \R^N$ satisfying 
\begin{equation}
  \label{eq:two-sided-ineq}
\vol_{N-1}(A \cap S_r) \ge c r^{N-1} \qquad \text{for all $r>0$ with a constant $c>0$.}
\end{equation}
Operators $I$ of this type are considered in the recent paper \cite{DK15} by Dyda and Kassmann, who proved, under additional assumptions, a weak Harnack inequality which implies the strong maximum principle, see \cite[Theorem 1.1]{DK15}. We also mention \cite{FK12} where a parabolic Harnack inequality has been proved in this context. The lower bound in (\ref{eq:two-sided-ineq}) is of key importance for a Harnack inequality to hold. In contrast, it is not required in Theorem~\ref{main-theorem}, where arbitrary measurable symmetric sets $A \subset \R^N$ satisfying (\ref{eq:j-2-example}) may be considered together with exponents of the form (\ref{eq:assumption-tau-s}). In the case where $A$ is bounded, we may relax assumption~(\ref{eq:assumption-tau-s}) and consider any $\tau > -N-2$. 

In the special case where $\tau= -N$ and $A$ is a bounded symmetric set containing a small ball $B_r(0)$ for some $r>0$, the operator $I$ is a zero order operator, which still has a regularizing effect. More precisely, in this case it has been observed in the recent work by Kassmann and Mimica \cite{KM13} that solutions of $Iu = f$ are continuous if $f$ is bounded. It is a challenging open question whether such a regularity result still holds for $\tau= -N$ and general bounded and symmetric $A$ satisfying only (\ref{eq:j-2-example}). Theorem \ref{main-theorem} shows that the strong maximum principle holds for this class of operators and weak supersolutions. 
  
Another interesting aspect is given by the fact that, depending on the shape of $A$, $j$ is allowed to have a singularity of arbitrarily high order. For example, if 
 $$
 A:= \Bigl \{(x_1,x') \in [-1,1] \times \R^{N-1}\::\: |x'| \le |x_1|^\rho \Bigr\} \subset \R^N
 $$
for some $\rho > 1$ and $\tau > -3-(N-1)\rho$, then (\ref{eq:j-2-example}) and (\ref{eq:j-1-example}) are satisfied while \eqref{eq:two-sided-ineq} does not hold (see also \cite[Example 6]{DK15} for a related example). \\

The strong maximum principle given in Theorem~\ref{main-theorem} can be extended to {\em regional operators} of the form 
\[
I_{\Omega} u(x)=P.V.\int_{\Omega}(u(x)-u(y))j(x-y)\ dy : = \lim_{\eps \to 0^+} 
\int_{\Omega \setminus B_{\eps}(x)}(u(x)-u(y))j(x-y)\ dy
\]
under the same assumptions (j1) and (j2). In order to define a notion of weak supersolutions of the equation 
\begin{equation}
  \label{nonlocal-general-0-regional}
		I_\Omega u =c(x) u \qquad \text{in $\Omega$,}
\end{equation}
we introduce the space 
\begin{equation}
\label{def-H-J-loc}
\cH^j_{loc}(\Omega):= \Bigl\{u \in L^2_{loc}(\overline{\Omega}):\! \int_{\Omega'}\int_{\Omega}
(u(x)-u(y))^2 j(x-y)\,dxdy < \infty\; \text{for every $\Omega'\subset \subset \Omega$}\Bigr\}.
\end{equation}
The following theorem then arises as a rather direct corollary of Theorem~\ref{main-theorem}.

	\begin{thm}\label{main-theorem-regional} 
Suppose that (j\,\!1) and (j\,\!2) hold, let $\Omega\subset\R^{N}$ be a domain, and let $c \in L^\infty_{loc}(\Omega)$.
Moreover, let $u \in \cH^j_{loc}(\Omega)$ be a weak supersolution of (\ref{nonlocal-general-0-regional}), i.e., 
 \begin{equation*}
\int_{\Omega}\int_{\Omega}
(u(x)-u(y))(\phi(x)-\phi(y)) j(x-y)\,dxdy \ge \int_{\Omega} c(x) u \phi \,dx\qquad \text{for all $\phi \in C^\infty_c(\Omega)$, $\phi \ge 0$.} 
 \end{equation*}
Suppose furthermore that $u \ge 0$ a.e. in $\Omega$.\\[0.1cm]
Then either $u \equiv 0$ in $\Omega$, or $u$ is strictly positive in $\Omega$.
	\end{thm}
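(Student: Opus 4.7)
The plan is to reduce Theorem~\ref{main-theorem-regional} to Theorem~\ref{main-theorem} by extending $u$ by zero outside $\Omega$. Set
$$
\tilde u(x) := u(x) \text{ if } x\in\Omega, \qquad \tilde u(x):=0 \text{ if } x\in\R^N\setminus\Omega.
$$
Since $u\in L^2_{loc}(\overline{\Omega})$, one gets $\tilde u \in L^2_{loc}(\R^N)$; since $u\ge 0$ a.e.\ in $\Omega$, one has $\tilde u\ge 0$ on $\R^N$; and $\tilde u\equiv u$ on $\Omega$. The goal is to show that $\tilde u \in \cV^j_{loc}(\Omega)$ and that $\tilde u$ is a weak supersolution of $I\tilde u = \tilde c(x)\tilde u$ in $\Omega$ for a suitable $\tilde c\in L^\infty_{loc}(\Omega)$, and then to invoke Theorem~\ref{main-theorem}; the resulting dichotomy restricts to the one claimed for $u$.

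The main computation is to split the integrals occurring in the definition of $\cV^j_{loc}(\Omega)$ and of $\cE_j(\tilde u,\phi)$ (for $0\le \phi\in C_c^\infty(\Omega)$) into the four blocks $\Omega\times\Omega$, $\Omega\times\Omega^c$, $\Omega^c\times\Omega$, $\Omega^c\times\Omega^c$, where $\Omega^c:=\R^N\setminus\Omega$. The last block contributes zero because $\tilde u$ and $\phi$ vanish on $\Omega^c$, and the two cross blocks collapse, via the symmetry of $j$, into a single exterior contribution involving
$$
h(x) := \int_{\Omega^c} j(x-y)\,dy,\qquad x\in\Omega.
$$
For any $\Omega'\subset\subset\Omega$ with $\delta := \dist(\Omega',\Omega^c)>0$ we have $h(x)\le \int_{|z|>\delta}j(z)\,dz$ for all $x\in\Omega'$, and the latter is finite by (j\,1); hence $h\in L^\infty_{loc}(\Omega)$. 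The same tail estimate bounds the $\Omega^c$-part of the $\cV^j_{loc}$-integral by $\|u\|_{L^2(\Omega')}^2\int_{|z|>\delta}j(z)\,dz$, while the $\Omega\times\Omega$-part is finite by $u\in\cH^j_{loc}(\Omega)$; together these give $\tilde u\in\cV^j_{loc}(\Omega)$. The same splitting yields the identity
$$
\cE_j(\tilde u,\phi) = \frac{1}{2}\int_{\Omega}\int_{\Omega}(u(x)-u(y))(\phi(x)-\phi(y))j(x-y)\,dxdy + \int_{\Omega} u(x)\phi(x)h(x)\,dx.
$$

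Dividing the regional weak supersolution inequality of the statement by $2$ and substituting into the above identity gives
$$
\cE_j(\tilde u,\phi) \ge \int_{\Omega}\tilde c(x)\,u(x)\phi(x)\,dx, \qquad \tilde c(x) := \frac{c(x)}{2} + h(x) \in L^\infty_{loc}(\Omega),
$$
so $\tilde u$ is a weak supersolution of $I\tilde u = \tilde c(x)\tilde u$ in $\Omega$ in the sense of (\ref{eq:assumption-supersol-variant-1}). Since $\tilde u\ge 0$ on $\R^N$, Theorem~\ref{main-theorem} applies and forces either $\tilde u\equiv 0$ in $\Omega$, or $\tilde u$ strictly positive in $\Omega$. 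As $u=\tilde u$ on $\Omega$, this is exactly the conclusion of Theorem~\ref{main-theorem-regional}. I do not expect any genuine obstacle in this reduction; the only care point is the integrability of the boundary-crossing contributions, which is handled uniformly by the tail bound $\int_{|z|>\delta}j(z)\,dz<\infty$ coming from (j\,1).
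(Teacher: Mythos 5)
Your proof is correct and follows essentially the same route as the paper: extend $u$ by zero to $\tilde u$, check $\tilde u\in\cV^j_{loc}(\Omega)$ via the tail bound from (j\,1), absorb the exterior contribution $h(x)=\int_{\R^N\setminus\Omega}j(x-y)\,dy$ into a new potential $\tilde c\in L^\infty_{loc}(\Omega)$, and apply Theorem~\ref{main-theorem}. The only (immaterial) difference is the constant in $\tilde c$: the paper writes $\tilde c=c+h$, implicitly using the $\tfrac12$-normalized regional form consistent with $\cE_j$, whereas you take the stated inequality literally and obtain $\tilde c=\tfrac{c}{2}+h$; in either case $\tilde c\in L^\infty_{loc}(\Omega)$, which is all the reduction needs.
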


To deduce this result from Theorem~\ref{main-theorem}, we merely note that, under the assumptions of Theorem~\ref{main-theorem-regional}, the trivial extension $\tilde u$ of $u$ to $\R^N$ is contained in $\cV^j_{loc}(\Omega)$, and it is a nonnegative weak supersolution of the equation $I u = \tilde c(x)u$ with 
$$
\tilde c \in L^\infty_{loc}(\Omega),\qquad \tilde c(x)= c(x) + \int_{\R^N \setminus \Omega}j(x-y)\,dy.
$$
Hence Theorem~\ref{main-theorem} shows that $\tilde u$ is strictly positive in $\Omega$, so the conclusion of Theorem~\ref{main-theorem-regional} follows.
 
We note that, in the special case where $j(z)= |z|^{-N-2s}$ for some $s \in (0,1)$, the operator $I_\Omega$ is called the {\em regional fractional Laplacian}, see e.g. \cite{QM05}. A strong maximum principle in this special case has been given in \cite[Theorem 4.1]{MN17}.

The strategy of the proof of Theorem~\ref{main-theorem} consists in 
successively increasing the region of positivity of nontrivial nonnegative supersolutions of (\ref{nonlocal-general-0}). This can be done by means of a  weak maximum principle for domains with small volume and suitably constructed comparison functions. This weak maximum principle needs to be derived in a preliminary step based on assumption (j\,\!1). In the present framework, a small volume weak maximum principle can be stated as follows. 

	\begin{thm}\label{weak-max-intro}
Suppose that (j\,\!1) is satisfied, and let $c \in L^\infty(\R^N)$ satisfy 
$$
\|c^+\|_{L^\infty(\R^N)}< \int_{\R^N}j(z)\,dz\in(0,\infty]. 
$$
Then there exists $r>0$ such that for every open set $\Omega \subset \R^N$ with $|\Omega| \le r$ and any weak supersolution $u \in \cV^j_{loc}(\Omega)$ of (\ref{nonlocal-general-0}) in $\Omega$ such that  
\begin{equation}
  \label{eq:weak-max-intro-cond}
\text{$u$ is nonnegative outside a compact subset of $\Omega$}
\end{equation}
we have $u \ge 0$ in $\R^N$.
\end{thm}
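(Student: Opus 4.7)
The plan is to test the weak supersolution inequality with $v := u^{-}=\max(-u,0)$. By hypothesis there is a compact set $K\subset \Omega$ outside which $u\geq 0$, so $v$ is supported in $K$, with $|K|\leq |\Omega|\leq r$. A preliminary (technical) step is to justify, by approximating $v$ via a sequence of nonnegative $C^\infty_c(\Omega)$-functions in the topology induced by $\|\cdot\|_{L^2(\R^N)} + \cE_j(\cdot,\cdot)^{1/2}$ (using $u \in \cV^j_{loc}(\Omega)$, the elementary pointwise bound $|u^{-}(x)-u^{-}(y)|\leq|u(x)-u(y)|$, and standard mollification), that $v$ is admissible in \eqref{eq:assumption-supersol-variant-1}. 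Since $uv = -v^{2}$ pointwise, this yields $\cE_j(u,v) \geq -\int_\Omega c(x) v^2\,dx$.

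Writing $u = u^{+}-v$ and using that $u^{+}\cdot v\equiv 0$, one has $\cE_j(v,v) = \cE_j(u^{+},v) - \cE_j(u,v)$, while the pointwise identity
$$
(u^{+}(x) - u^{+}(y))(v(x)-v(y)) = -u^{+}(x)v(y) - u^{+}(y)v(x) \leq 0
$$
gives $\cE_j(u^{+},v) \leq 0$. Combining these produces the upper bound
$$
\cE_j(v,v) \;\leq\; -\cE_j(u,v) \;\leq\; \int_\Omega c(x) v^2\,dx \;\leq\; \|c^{+}\|_{L^\infty(\R^N)} \|v\|_{L^2(\R^N)}^2. \qquad (\ast)
$$

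The main step is then a small-volume Poincar\'e-type lower bound. Since $v=0$ on $K^{c}$, the symmetry of $j$ gives
$$
\cE_j(v,v) \geq \int_K v(x)^2 F(x)\,dx, \qquad F(x):=\int_{K^{c}} j(x-y)\,dy,
$$
and for any $\delta>0$ a further restriction to $\{|x-y|\geq \delta\}\cap K^{c}$ yields
$$
F(x) \geq \int_{|z|\geq \delta} j(z)\,dz \;-\; \int_{(x-K)\cap\{|z|\geq \delta\}} j(z)\,dz.
$$
By (j\,\!1), $J_\delta := \int_{|z|\geq\delta} j(z)\,dz$ is finite and $J_\delta \uparrow J := \int_{\R^N} j(z)\,dz$ as $\delta \to 0^{+}$. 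Since $\|c^{+}\|_{L^\infty(\R^N)} < J$, one may fix $\delta_0>0$ with $J_{\delta_0} \geq \|c^{+}\|_{L^\infty(\R^N)} + 2\eps$ for some $\eps>0$ (trivially possible if $J=\infty$). As $j\mathbf{1}_{\{|z|\geq\delta_0\}} \in L^1(\R^N)$, absolute continuity of the integral produces $r>0$ such that $\int_E j(z)\,dz < \eps$ for every measurable $E \subset \{|z|\geq \delta_0\}$ with $|E|\leq r$. Since $|x-K|=|K|\leq r$, this gives $F(x)\geq \|c^{+}\|_{L^\infty(\R^N)}+\eps$ for every $x\in K$, and hence $\cE_j(v,v)\geq (\|c^{+}\|_{L^\infty(\R^N)}+\eps)\|v\|_{L^2(\R^N)}^2$. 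Together with $(\ast)$ this forces $\eps\|v\|_{L^2(\R^N)}^2\leq 0$, so $v\equiv 0$ and $u\geq 0$ on $\R^N$.

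The main obstacle is the admissibility step at the very beginning: the weak formulation \eqref{eq:assumption-supersol-variant-1} a priori accepts only $\phi \in C^\infty_c(\Omega)$, whereas $v=u^{-}$ is only known to lie in an $L^2$-space with finite $\cE_j$-energy. Establishing the required density of nonnegative $C^\infty_c(\Omega)$-functions around $v$ with respect to the relevant norm, and passing to the limit on both sides of \eqref{eq:assumption-supersol-variant-1}, is the delicate technical point; the remaining Poincar\'e-type argument is essentially elementary once this is secured.
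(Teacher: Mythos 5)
Your argument is correct in substance and shares its skeleton with the paper's proof: testing with $v=u^-$, the sign information $\cE_j(u^+,u^-)\le 0$ and the resulting bound $\cE_j(v,v)\le -\cE_j(u,v)$ are exactly the paper's Lemma~\ref{pospart} and Proposition~\ref{wmp1}, and the coercivity estimate $\cE_j(v,v)\ge \int_K v^2(x)\int_{K^c}j(x-y)\,dy\,dx$ is the same Poincar\'e-type inequality as \eqref{eq:est-dec-rearr-4}. Where you genuinely diverge is in how the small-volume lower bound on $\int_{K^c}j(x-\cdot)$ is obtained: the paper packages it as an eigenvalue statement, proving via the decreasing rearrangement of $j$ that $\Lambda_1(r)\to \int_{\R^N}j$ as $r\to 0$ (Proposition~\ref{blowup-1}) and then invoking the abstract weak maximum principle for $\|c^+\|_{L^\infty}<\Lambda_1$; you instead fix a truncation level $\delta_0$ with $\int_{|z|\ge\delta_0}j>\|c^+\|_{L^\infty}+2\eps$ and use absolute continuity of the integral of $j\,1_{\{|z|\ge\delta_0\}}\in L^1$ to absorb the removed set $(x-K)\cap\{|z|\ge\delta_0\}$ of measure at most $r$. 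Your version is more elementary and suffices for this theorem, since $r$ depends only on $j$ and $c$; the paper's rearrangement argument buys the sharper asymptotic statement about $\Lambda_1(r)$, which it reuses (through Theorem~\ref{weak-max-small-volume}) in the proof of the strong maximum principle. One caveat: the admissibility of $v=u^-$ as a test function, which you correctly single out as the delicate point, is not quite "standard mollification". It is the content of the paper's Proposition~\ref{density}, whose proof needs the Jensen-type estimate $\cE_j(v_\eps,v_\eps)\le\cE_j(v,v)$ together with a weak-convergence and uniform-convexity argument in $L^2(\R^N\times\R^N)$ (translation invariance of the kernel, $J(x,y)=j(x-y)$, is essential here); the subsequent passage to the limit in \eqref{eq:assumption-supersol-variant-1} uses the continuity bound of Lemma~\ref{testing} on a set $\Omega'$ with $\supp v\subset\Omega'\subset\subset\Omega$. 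With that step filled in as in the paper, your proof is complete.
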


Related weak maximum principles have been derived e.g. in \cite{jarohs-thesis,JW14,FKV13} under different assumptions. The proof of Theorem~\ref{weak-max-intro} is inspired  by \cite{jarohs-thesis,JW14} and relies on estimates for the bilinear form $\cE_j$ and for the small volume asymptotics of the first Dirichlet eigenvalue $\Lambda_1(\Omega)$ of the operator $I$, see (\ref{firsteigen}) and (\ref{blowup}) below. In addition to these estimates, we also need a density property since our notion of supersolutions is based on testing only with $C^\infty_c(\Omega)$-functions, see Proposition~\ref{density} below. In fact, in the proof of Theorem~\ref{main-theorem}, it will be convenient to introduce a stronger notion of supersolutions with respect to a larger Hilbertian test function space related to the variational features of the problem. We shall do this in Section \ref{sec:main-results-general}, where we also formulate weak and strong maximum principles 
in a more general framework of bilinear forms  
\begin{equation}\label{J-form}
(u,v) \mapsto \cE_J(u,v)=\frac{1}{2}\int_{\R^N}\int_{\R^N}(u(x)-u(y))(v(x)-v(y))J(x,y)\ dxdy
\end{equation}
with $x$-dependent kernel functions $J(x,y)$.
 
The most difficult step in the proof of Theorem~\ref{main-theorem} is to prove that any nontrivial nonnegative supersolution $u \in \cV^j_{loc}(\Omega)$ of (\ref{nonlocal-general-0}) is strictly positive on larger and larger subsets of $\Omega$. Within this step, we first introduce the notion of the {\em subsolution property (SSP)} of a pair $(K,M)$ of subsets of $\Omega$. By the weak maximum principle, this property turns out to be a sufficient criterion for a weak supersolution of (\ref{nonlocal-general-0}) to inherit strict positivity on $M$ from uniform positivity on $K$. Then, using the local nontriviality condition (j2) for the kernel, we determine finite sequences $(K_i,M_{i+1})_i$ of pairs of subsets of $\Omega$ satisfying $(SSP)$ and such that we can successively prove that $u$ is strictly positive in $M_{i}$ for all $i$. Somewhat surprisingly, in this step we are led to prove a purely geometric existence result for localized paths in general lattices $\sum \limits_{k=1}^N v_i \Z \subset \R^N$ generated by linearly independent vectors $v_1,\dots,v_N \in \R^N$. This result is given in Lemma~\ref{lattice-path-appendix} in the Appendix. We neither claim that it is new nor that it is optimal, but we could not find a reference for it and believe that it might be of independent interest. 

The paper is organized as follows. In Section~\ref{sec:main-results-general}, we present weak and strong maximum principles in the framework of bilinear forms.  In Section~\ref{notation} we collect useful properties and estimates related to the function spaces used in this paper. In Section~\ref{section-weak-mp}, we complete the proof of our weak maximum principles including Theorem~\ref{weak-max-intro}. In Section~\ref{section-strong-mp}, we give the proofs of our strong maximum principles including Theorem~\ref{main-theorem}. Finally, the Appendix is devoted to purely geometric properties of lattices in $\R^N$ which are used in the proof of Theorem~\ref{main-theorem}.\\ 

In the remainder of the paper, we will use the following notation. Let $U,V\subset \R^N$ be nonempty measurable sets, $x \in \R^N$ and $r>0$. We denote by $1_U: \R^N \to \R$ the characteristic function, $|U|$ the Lebesgue measure, and $\diam(U)$ the diameter of $U$. The notation $V \subset \subset U$ means that $\overline V$ is compact and contained in the interior of $U$. The distance between $V$ and $U$ is given by $\dist(V,U):= \inf\{|x-y|\::\: x \in V,\, y \in U\}$. Note that this notation does {\em not} stand for the usual Hausdorff distance.  If $V= \{x\}$ we simply write $\dist(x,U)$. We let $B_r(U):=\{x\in \R^N\;:\; \dist(x,U)<r\}$, so that $B_r(x):=B_r(\{x\})$ is the open ball centered at $x$ with radius $r$. We also put $B:=B_1(0)$ and $\omega_N:=|B|$. Finally, given a function $u: U\to \R$, $U \subset \R^N$, we let $u^+:= \max\{u,0\}$ and $u^-:=-\min\{u,0\}$ denote the positive and negative part of $u$, and we write $\supp\ u$ for the support of $u$ given as the closure in $\R^N$ of the set ${\{ x\in U\;:\; u(x)\neq 0\}}$.

\subsection*{Acknowledgement}

The authors thank Moritz Kassmann for valuable discussions. 

\section{Main results in a general setting}
\label{sec:main-results-general}

In this section, we consider a general setting of nonlocal equations extending the framework of Section~\ref{sec:introduction}. More precisely, we consider the equation
\begin{equation}
\label{general-1}
Iu= c(x) u +g \qquad \text{in $\Omega$}
\end{equation}
where $\Omega\subset \R^N$ is an open set, $c,g \in L^{\infty}(\Omega)$, and $I$ is a nonlocal linear operator formally given by 
\begin{equation}
  \label{eq:def-Iu-formal-0-J}
Iu(x)=P.V.\int_{\R^N}(u(x)-u(y))J(x,y)\ dy : = \lim_{\eps \to 0^+} 
\int_{\R \setminus B_{\eps}(x)}(u(x)-u(y))J(x,y)\ dy.
\end{equation}
Here and throughout the remainder of the paper, the measurable kernel function $J:\R^N\times \R^N  \to[0,\infty]$ is assumed to satisfy
\begin{enumerate}
	\item[(J1)] $J(x,y)=J(y,x)$ for all $x,y\in \R^N$, and
          \begin{equation}
\label{def-C-J}
        C_J:= \sup\limits_{x\in \R^N}\int\limits_{\R^N} 1 \wedge|x-y|^2 J(x,y)\ dy <\infty.    
          \end{equation}
\end{enumerate}
If $j: \R^N \to [0,\infty]$ satisfies assumption (j1) from the introduction, then the operator in (\ref{eq:def-Iu-formal}) arises as a special case of (\ref{eq:def-Iu-formal-0-J}) with the kernel $J:\R^N\times \R^N  \to[0,\infty],\; J(x,y)= j(x-y)$ which then satisfies (J1).  The bilinear form associated to $I$ is given by 
\begin{equation}\label{form}
(u,v) \mapsto \cE_J(u,v)=\frac{1}{2}\int_{\R^N}\int_{\R^N}(u(x)-u(y))(v(x)-v(y))J(x,y)\ dxdy.
\end{equation}
As we shall see in Lemma~\ref{Lipschitz-inclusion} below, assumption (J1) guarantees that $\cE_J$ is well-defined on the space of compactly supported Lipschitz functions. In particular, it is densely defined on $L^2(\Omega)$, where -- here and in the following -- we identify $L^2(\Omega)$ with the space of functions $u \in L^2(\R^N)$ with $u \equiv 0$ on $\R^N \setminus \Omega$.

We shall analyze supersolutions of \eqref{general-1} in weak sense (cf. \cite{DK15,FKV13}). For this we introduce the following function spaces. 
\begin{defi}
\label{space1}\label{space2}
For an open set $\Omega\subset \R^N $, we let 
$\cD^J(\Omega)$ denote the space of all functions $u \in L^2_{loc}(\R^N)$ with $\cE_J(u,u)<\infty$ and  $u \equiv 0$ 
on $\R^N \setminus \Omega$.

Moreover, we let 
$\cV^J(\Omega)$ denote the space of all functions $u\in L^2_{loc}(\R^N)$ such that 
$$
\rho(u,\Omega):= \frac{1}{2}\int_{\Omega}\int_{\R^N} (u(x)-u(y))^2J(x,y)\ dxdy<\infty. 
$$
\end{defi}

It is easy to see that $\cD^J(\Omega)$ and $\cV^J(\Omega)$ are indeed vector spaces. Moreover, $\cE_J$ defines a semi-definite scalar product on $\cD^J(\Omega)$. We also note the inclusions  
\begin{equation}
\label{eq:inclusions}
\cD^J(\Omega_1) \subset \cD^J(\Omega_2) \subset \cD^J(\R^N) = \cV^J(\R^N) \subset \cV^J(\Omega_2) \subset \cV^J(\Omega_1)
\end{equation}
for open subsets $\Omega_1 \subset \Omega_2 \subset \R^N$. Moreover, we shall see in Lemma~\ref{testing} below that $\cE_J(u,v)$ is well-defined for $u \in \cV^J(\Omega)$, $v \in \cD^J(\Omega)$. Thus we may define the following notions of sub- and supersolutions of (\ref{general-1}). 

\begin{defi}
\label{def-variational-supersolution}
We call $u\in \cV^J(\Omega)$ a \textit{variational supersolution} of (\ref{general-1}) if 
\begin{equation}\label{def-super}
\cE_J(u,v)\geq \int_{\Omega} [c(x)u(x)+g(x)] v(x)\ dx 
\end{equation}
for all $v\in \cD^J(\Omega)$, $v\geq 0$ with bounded support in $\R^N$. Similarly, we say that $u$ is a \textit{variational subsolution} of \eqref{general-1} if (\ref{def-super}) holds with a reversed inequality. \end{defi}
We point out that this notion of super- and subsolutions differs from the one in Section~\ref{sec:introduction} with regard to the test functions considered. Here we allow test functions $v \in \cD^J(\Omega)$ with bounded support in $\R^N$, whereas in Section~\ref{sec:introduction} the smaller space $\cC_c^\infty(\Omega)$ of standard test functions is considered. We used the terms {\em variational super- and subsolutions} because test functions $v \in \cD^J(\Omega)$ naturally appear in a variational formulation of~(\ref{general-1}).    
Note that without imposing further hypotheses, we cannot expect that $\cC_c^\infty(\Omega)$ is dense in $\cD^J(\Omega)$. A useful density result under additional assumptions will be derived below in Proposition~\ref{density}. 

To state a weak maximum principle for variational supersolutions of (\ref{general-1}), we need to consider 
\begin{equation}\label{firsteigen}
\Lambda_1(\Omega):=\inf_{u\in \cD^J(\Omega)}\frac{\cE_J(u,u)}{\|u\|_{L^2(\Omega)}^2}\in[0,\infty).
\end{equation}
Here the quotient is understood in the sense that $\frac{\cE_J(u,u)}{\|u\|_{L^2(\Omega)}^2} = 0$ if $\cE_J(u,u)= 0$ or $u \not \in L^2(\Omega)$. The value $\Lambda_1(\Omega)$ is the infimum of the spectrum of the self-adjoint realization of the operator $I$ in $L^2(\Omega)$, which is associated to the closed and densely defined symmetric bilinear form $\cE_J$ with domain $\cD^J(\Omega) \cap L^2(\Omega)$, see e.g. \cite[Theorem VIII.15, pp. 278]{SR}. Indeed, the closedness of $\cE_J$ on $\cD^J(\Omega) \cap L^2(\Omega)$ can be seen exactly as in \cite[Proposition 2.2]{JW14}, and it implies that $\cD^J(\Omega) \subset L^2(\Omega)$ is a Hilbert-space with scalar product $\cE_J$ if $\Lambda_1(\Omega)>0$. Since we will not use these functional analytic properties, we skip the details at this point. 

Consider the function 
\begin{equation}\label{smallj}
z \mapsto j(z):=\essinf \bigl \{J(x,x\pm z)\;:\;x\in \R^N \bigr \}
\end{equation}
which is an a.e. well-defined even and measurable function $j: \R^N \to [0,\infty]$ satisfying assumption (j\,\!1) from Section~\ref{sec:introduction} as a consequence of (J1). We note the following important observation proved in \cite[Lemma 2.7]{FKV13}: {\em If the set $\{j>0\} \subset \R^N$ has positive measure and $\Omega \subset \R^N$ is open and bounded, then $\Lambda_1(\Omega)>0$.} 

The following weak maximum principle indicates the significance of $\Lambda_1(\Omega)$.

\begin{prop}\label{wmp1}
Let (J1) be satisfied, let $\Omega\subset \R^N$ be open, bounded such that $\Lambda_1(\Omega)>0$, let $c\in L^{\infty}(\Omega)$ with $\|c^+\|_{L^{\infty}(\Omega)}<\Lambda_1(\Omega)$, and let $g\in L^2(\Omega)$. Then for every variational supersolution $u \in \cV^J(\Omega)$ of (\ref{general-1}) satisfying 
        \begin{equation}
\label{wmp1-cond-1}
u \ge 0 \qquad \text{a.e. in $\R^N \setminus \Omega$}          
        \end{equation}
we have 
\begin{equation}
  \label{wmp1-assertion}
\|u^-\|_{L^2(\Omega)}\leq \frac{\|g^{-}\|_{L^2(\Omega)}}{\Lambda_1(\Omega)-\|c^+\|_{L^{\infty}(\Omega)}}.  
\end{equation}
In particular, if $g \ge 0$ in $\Omega$, then  $u\geq0$ in $\R^N$.
\end{prop}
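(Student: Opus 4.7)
The plan is to test the variational supersolution inequality against $v = u^-$, which in a single step will couple the coercivity estimate coming from the definition of $\Lambda_1(\Omega)$ with the control on $c^+$, yielding the quantitative bound \eqref{wmp1-assertion}. Before testing, I have to verify that $u^-$ lies in the admissible test function class $\cD^J(\Omega)$ and has bounded support in $\R^N$.

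\textbf{Admissibility of $u^-$.} Since $u \ge 0$ a.e.\ on $\R^N \setminus \Omega$ by \eqref{wmp1-cond-1} and $\Omega$ is bounded, $u^-$ vanishes a.e.\ outside $\overline{\Omega}$, so $u^- \in L^2(\Omega)$ (because $u \in L^2_{loc}(\R^N)$ and $\overline \Omega$ is compact) and has bounded support. To see that $\cE_J(u^-,u^-)<\infty$, I split the double integral over $\R^N\times\R^N$ into the four pieces $\Omega\times \Omega$, $\Omega\times\Omega^c$, $\Omega^c\times\Omega$, $\Omega^c\times\Omega^c$. The last piece vanishes because $u^-\equiv 0$ on $\Omega^c$. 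On $\Omega\times\Omega$ I use the elementary inequality $|a^- - b^-|\le |a-b|$ to bound the integrand by $(u(x)-u(y))^2 J(x,y)$; on $\Omega\times\Omega^c$ I use that $u(y)\ge 0$, so whenever $u^-(x)\neq 0$, i.e.\ $u(x)<0$, we have $(u^-(x))^2=u(x)^2\le (u(x)-u(y))^2$. Putting these pieces together and using symmetry of $J$ yields $\cE_J(u^-,u^-)\le 2\rho(u,\Omega)<\infty$, so $u^-\in\cD^J(\Omega)$.

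\textbf{Algebraic inequality for the form.} The decisive pointwise identity is
\[
(a-b)(a^- - b^-) \;=\; (a^+ - b^+)(a^- - b^-) - (a^- - b^-)^2,
\]
where a case analysis on the signs of $a,b$ shows that $(a^+ - b^+)(a^- - b^-) \le 0$. Applied with $a=u(x)$, $b=u(y)$ and integrated against $J$, this gives $\cE_J(u,u^-) \le -\cE_J(u^-,u^-)$. At the same time, from $u=u^+-u^-$ and $u^+u^-=0$ I get $\int_\Omega c(x)u\,u^-\,dx = -\int_\Omega c(x)(u^-)^2\,dx$.

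\textbf{Conclusion.} Now testing the variational supersolution inequality against $v=u^-\ge 0$ and combining with the two observations above yields
\[
-\cE_J(u^-,u^-) \;\ge\; -\int_\Omega c(x)(u^-)^2\,dx + \int_\Omega g(x)u^-(x)\,dx,
\]
hence
\[
\cE_J(u^-,u^-) \;\le\; \int_\Omega c^+(x)(u^-)^2\,dx + \int_\Omega g^-(x)u^-(x)\,dx \;\le\; \|c^+\|_{L^\infty(\Omega)}\|u^-\|_{L^2(\Omega)}^2 + \|g^-\|_{L^2(\Omega)}\|u^-\|_{L^2(\Omega)}.
\]
Since $u^-\in\cD^J(\Omega)$, the Rayleigh characterization \eqref{firsteigen} gives $\cE_J(u^-,u^-)\ge \Lambda_1(\Omega)\|u^-\|_{L^2(\Omega)}^2$. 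Combining these two bounds and dividing by $\|u^-\|_{L^2(\Omega)}$ (trivial if this quantity is zero) produces \eqref{wmp1-assertion}; the ``in particular'' statement is then immediate, since $g\ge 0$ forces $u^-=0$ a.e.\ in $\Omega$, and $u\ge 0$ on $\R^N\setminus\Omega$ was assumed. The main subtlety in the argument is the admissibility step, where the local integrability condition $u\in\cV^J(\Omega)$ (rather than $u\in\cD^J(\R^N)$) must be leveraged by carefully decomposing the double integral.
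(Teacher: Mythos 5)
Your proof is correct and follows essentially the same route as the paper: testing with $v=u^-$, using the sign inequality $\cE_J(u,u^-)\le -\cE_J(u^-,u^-)$, and invoking the Rayleigh characterization of $\Lambda_1(\Omega)$. The admissibility of $u^-$ and the algebraic inequality that you verify inline are precisely the content of the paper's Lemma~\ref{pospart}(i)--(iv), which the paper simply cites at this point.
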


As an intermediate step in the proof of strong maximum principles, we need a variant of this weak maximum principle for domains with small volume. For this we need to analyze the small volume asymptotics of $\Lambda_1(\Omega)$. Let 
$$
\Lambda_{1}(r):= \inf\{\Lambda_1(\Omega)\::\: \text{$\Omega \subset \R^N$ open, $|\Omega|=r$}\} \qquad \text{for $r>0$.} 
$$
We shall see in Proposition~\ref{blowup-1} below that 
\begin{equation}
\label{blowup}\lim_{r \to 0}\Lambda_{1}(r) \ge \int_{\R^N}j(z)\,dz,
\end{equation}
where $j$ is given in (\ref{smallj}). A combination of Proposition~\ref{wmp1} and \eqref{blowup} readily yields the following small volume maximum principle.

	\begin{thm}\label{weak-max-small-volume}
Suppose that (J1) is satisfied, and let $c \in L^\infty(\R^N)$ satisfy 
\begin{equation}
  \label{eq:assumption-j3}
\|c^+\|_{L^\infty(\R^N)}< \int_{\R^N}j(z)\,dz,
\end{equation}
where $j$ is given in (\ref{smallj}). Then there exists $r>0$ such that for every open bounded set $\Omega \subset \R^N$ with $|\Omega| \le r$ and 
every variational supersolution $u \in \cV^J(\Omega)$ of 
\begin{equation}
\label{general-1-1}
Iu= c(x) u\qquad \text{in $\Omega$}
\end{equation}
with $u \ge 0$ a.e. in $\R^N \setminus \Omega$ we have $u \ge 0$ in $\R^N$.
\end{thm}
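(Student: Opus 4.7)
The plan is to obtain the theorem as a direct combination of Proposition~\ref{wmp1} with the small-volume asymptotics \eqref{blowup}. I would exploit \eqref{blowup} to pick $r>0$ so small that $\Lambda_1(\Omega)>\|c^+\|_{L^\infty(\R^N)}$ whenever $|\Omega|\le r$, and then invoke Proposition~\ref{wmp1} applied with $g\equiv 0$.

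\textbf{Choosing $r$.} First I would set $\delta:=\int_{\R^N}j(z)\,dz-\|c^+\|_{L^\infty(\R^N)}$, which is strictly positive by \eqref{eq:assumption-j3}. In particular $\int_{\R^N}j>0$, so $\{j>0\}$ has positive measure and the observation cited in the paragraph preceding Proposition~\ref{wmp1} yields $\Lambda_1(\Omega)>0$ for every open bounded $\Omega$. Next, using \eqref{blowup}, I would pick $r>0$ small enough that $\Lambda_1(s)\ge \int_{\R^N}j-\delta/2=\|c^+\|_{L^\infty(\R^N)}+\delta/2$ for every $0<s\le r$; the relevant monotonicity $s\mapsto\Lambda_1(s)$ is nonincreasing, since the inclusion $\cD^J(\Omega_1)\subset\cD^J(\Omega_2)$ from \eqref{eq:inclusions} whenever $\Omega_1\subset\Omega_2$ (together with the fact that $L^2$-norms are measured on the enlarged set) gives $\Lambda_1(\Omega_2)\le\Lambda_1(\Omega_1)$.

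\textbf{Conclusion via Proposition~\ref{wmp1}.} For $\Omega$ open and bounded with $|\Omega|\le r$, and for $u\in\cV^J(\Omega)$ a variational supersolution of \eqref{general-1-1} with $u\ge 0$ a.e.\ in $\R^N\setminus\Omega$, the definition of $\Lambda_1(|\Omega|)$ together with the choice of $r$ gives
$$
\Lambda_1(\Omega)\ge\Lambda_1(|\Omega|)\ge\|c^+\|_{L^\infty(\R^N)}+\delta/2>\|c^+\|_{L^\infty(\Omega)}.
$$
All hypotheses of Proposition~\ref{wmp1} are therefore met with $g\equiv 0$, and its conclusion \eqref{wmp1-assertion} becomes $\|u^-\|_{L^2(\Omega)}\le 0$. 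Hence $u\ge 0$ a.e.\ in $\Omega$, and together with the exterior hypothesis this yields $u\ge 0$ a.e.\ in $\R^N$.

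\textbf{Main obstacle.} The statement is essentially a repackaging of two prior inputs: the weak maximum principle at positive $\Lambda_1$ (Proposition~\ref{wmp1}) and the blow-up \eqref{blowup}. The only subtlety at this level will be to verify that the strict inequality in \eqref{eq:assumption-j3} simultaneously rules out the degenerate case $j\equiv 0$ (so that $\Lambda_1(\Omega)>0$ and Proposition~\ref{wmp1} is applicable) and leaves positive slack $\delta/2$ to absorb the error implicit in \eqref{blowup}. The real difficulty lives upstream, in the proofs of Proposition~\ref{wmp1} and of the asymptotic lower bound \eqref{blowup} (Proposition~\ref{blowup-1}), not in this corollary-type deduction.
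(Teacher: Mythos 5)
Your proposal is correct and follows essentially the same route as the paper: combine the small-volume asymptotics of Proposition~\ref{blowup-1} (including the monotonicity of $r\mapsto\Lambda_1(r)$) with Proposition~\ref{wmp1} applied with $g\equiv 0$, then use the exterior nonnegativity to conclude $u\ge 0$ in $\R^N$. The only cosmetic point is that when $\int_{\R^N}j(z)\,dz=\infty$ your quantity $\delta$ is infinite and the expression $\int_{\R^N}j-\delta/2$ is ill-defined; choosing instead any finite threshold above $\|c^+\|_{L^\infty(\R^N)}$ fixes this trivially and does not affect the argument.
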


Our main result in this abstract setting is a strong maximum principle. For this we need the following further assumptions. 

\begin{enumerate}
	\item[(J2)] For every $r>0$, the function $j$ given in (\ref{smallj}) does not vanish identically on $B_r(0)$.
	\item[(J3)] We have
	$$
	\sup\limits_{x\in \R^N}\int\limits_{\R^N} 1 \wedge |z|\; |J(x,x+z)-J(x,x-z)|\ dz <\infty.
	$$
\end{enumerate}

\begin{thm}\label{hopf-simple2-variant1}
Assume (J1) -- (J3), let $\Omega\subset\R^{N}$ be a domain, and let $u \in \cV^J(\Omega)$ be a variational supersolution of 
\begin{equation*}
Iu= c(x) u\qquad \text{in $\Omega$}
\end{equation*}
satisfying $u \ge 0$ in $\R^N$. Then either $u \equiv 0$ in $\Omega$, or $u$ is strictly positive in $\Omega$.
	\end{thm}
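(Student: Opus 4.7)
Let $P := \{x \in \Omega : \essinf_U u > 0 \text{ for some open neighborhood } U \subset \Omega \text{ of } x\}$, which is open by construction. Since $\Omega$ is connected, it suffices to prove, under the hypothesis $u \not\equiv 0$ in $\Omega$, that $P$ is both nonempty and relatively closed in $\Omega$.

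\textbf{A propagation lemma via (SSP).} Following the strategy announced in the introduction, I would call a pair $(K,M)$ of subsets of $\Omega$ (with $K$ compact, $M$ open) \emph{admissible} if there exist a bounded open set $\Omega'$ with $M \subset \Omega' \subset \subset \Omega$, $\overline{\Omega'} \cap K = \emptyset$, $|\Omega'|$ below the threshold provided by Theorem~\ref{weak-max-small-volume} applied with $c$ replaced by $\tilde c := c \wedge \lambda$ for some fixed $\lambda < \int_{\R^N} j$, together with a bounded variational subsolution $w \in \cV^J(\Omega')$ of $Iw = \tilde c w$ in $\Omega'$ with $0 \le w \le 1_K$ on $\R^N \setminus \Omega'$ and $\essinf_M w > 0$. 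Granted admissibility and $\essinf_K u \ge \alpha > 0$, the inequality $\tilde c \le c$ combined with $u \ge 0$ shows that $u$ remains a variational supersolution of $I(\cdot) = \tilde c(\cdot)$, so $u - \alpha w$ is a variational supersolution of the same equation on $\Omega'$ which is nonnegative outside $\Omega'$; Theorem~\ref{weak-max-small-volume} then gives $u \ge \alpha w$ in $\R^N$, hence $M \subset P$.

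\textbf{Witness subsolutions and lattice paths.} The technical core is producing such a $w$ whenever $K$ and $M$ are related by a small offset vector $z_0$ with $|\{j > 0\} \cap B_\delta(z_0)| > 0$ for every $\delta > 0$. I would take $w$ to be a smoothed $1_K$ extended by a thin nonnegative bump into $\Omega'$, and verify using (J1) and (J3) that the gain term $\int J(x,y) w(y)\,dy \ge \int_K j(x-y)\,dy$, which is uniformly positive on $M$ by the choice of offset (and by the essential lower bound $J(x,y) \ge j(x-y)$ built into (\ref{smallj})), dominates both the loss $w(x)\int J(x,y)\,dy$ and the antisymmetric drift contribution controlled by (J3), once $|\Omega'|$ is chosen small enough. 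To propagate positivity across all of $\Omega$, I would use (J2) to select $N$ linearly independent vectors $v_1,\dots,v_N$ of arbitrarily small norm whose neighborhoods each carry positive measure of $\{j > 0\}$; Lemma~\ref{lattice-path-appendix} then furnishes, between any two prescribed points of $\Omega$, a finite path with steps in $\{\pm v_1,\dots,\pm v_N\}$ staying inside a prescribed compact subset of $\Omega$. Iteratively applying the propagation lemma to small balls around the consecutive path points transports strict positivity along the path, yielding relative closedness of $P$ in $\Omega$; nonemptiness follows by the same iteration started from a positive-measure subset of $\{u \ge \delta\} \cap \Omega$ for some $\delta > 0$, which exists because $u \not\equiv 0$ and $u \ge 0$.

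\textbf{Main obstacle.} The decisive difficulty is constructing the witness subsolution $w$ under the bare positivity assumption (J2), which permits $\{j > 0\}$ to be highly anisotropic or concentrated on thin cones, so that classical radial barriers and eigenfunction-based constructions are unavailable. The offset $z_0$ must be selected from a measurable ``good set'' of directions in which $j$ is essentially nondegenerate, and the pairing of $z_0$ with the geometry of $(K,M)$ has to be choreographed so that a single choice of generators $v_k$ makes the propagation lemma applicable at every step of the lattice path from an arbitrary $x \in P$ to an arbitrary target $y \in \overline P \cap \Omega$. This coordination between measure-theoretic positivity (from (J2)) and geometry is precisely the role played by the purely geometric Lemma~\ref{lattice-path-appendix} on localized lattice paths.
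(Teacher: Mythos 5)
Your skeleton is the paper's: the open set $W$ of points of locally uniform positivity, connectedness, a propagation step obtained by comparing $u$ with a multiple of $1_K$ plus a bump and invoking the small-volume weak maximum principle (the paper's (SSP) machinery, Lemmas~\ref{SSP-basic} and \ref{positivity-inherit}), and lattice generators drawn from the positivity set of $j$ via (J2) together with the localized path Lemma~\ref{lattice-path-appendix}. But two steps you rely on are not actually available in the form you state them. First, Lemma~\ref{lattice-path-appendix} does not connect ``any two prescribed points of $\Omega$'', nor confine the path to an arbitrary prescribed compact subset: it joins only points $x,x'$ with $x-x'\in G$, and it keeps the path only in $B_{4^{N-1}\rho}(x')$. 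The paper bridges exactly this: by Lemma~\ref{lattice-intersection} the translated lattice $x_1+G$ through the target $x_1$ meets the small ball around $x_0\in W$ where positivity is already known (possible because $\sum_k |v_k|$ can be made small compared with that ball), and positivity is propagated only over the definite radius $r(x)=\frac{4^{-N}}{3}\dist(x,\partial\Omega)$ of (\ref{eq:def-r-x}), which keeps all the balls along the path inside $\Omega$ and still yields relative closedness of $W$. Without these two adjustments your closedness argument does not go through as written.

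Second, the witness subsolution. For the ball-to-ball steps your offset condition does give $\inf_{x\in M}\int_K j(x-y)\,dy>0$, but only when the target ball is strictly smaller than the source ball (the paper's shrinking radii $\eps_j=\eps_1/j$), and the construction must be carried out variationally: $\int_{\R^N}J(x,y)\,dy$ may be infinite, so the pointwise gain/loss comparison is meaningless. The paper instead takes $w=f+a1_K$ with $f\in C^2_c$, controls the bump through (J1), (J3) via Lemma~\ref{representation}, and obtains the subsolution property by taking the amplitude $a$ large (Lemma~\ref{SSP-basic}); smoothing $1_K$ is unnecessary (and would even conflict with your constraint $0\le w\le 1_K$ off $\Omega'$), since $\dist(K,M)>0$ already makes $1_K$ admissible, and the smallness of $|\Omega'|$ enters only through the weak maximum principle, not through the subsolution property. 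More importantly, the first step of your iteration starts from an arbitrary positive-measure set $K\subset\{u\ge\delta\}$, not a ball; there the uniform bound $\inf_{x\in M}\int_K j(x-y)\,dy>0$ together with $\dist(M,K)>0$ is precisely the paper's Lemma~\ref{subsolutionstart}, proved using Lebesgue density points of both $K$ and a symmetric set $A\subset\{j\ge\delta'\}$, plus a separation argument (maximizing $v\mapsto v\cdot p$ over $\overline K$). You single this out as the decisive difficulty but leave it unresolved, and you attribute the needed coordination to the purely geometric lattice lemma; in the paper that coordination is supplied by the density theorem (\ref{eq:density-one-assertion}), while the lattice lemma only handles the geometry of the path.
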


Note that in this theorem we assume $\Omega$ to be a domain, i.e. a connected open set. A variant of this strong maximum principle for arbitrary open sets $\Omega \subset \R^N$ can be obtained if (J2) is replaced by the following much stronger uniform positivity condition. 
\begin{enumerate}
	\item[(J2)$_{S}$] For the function $j$ given in (\ref{smallj}) and every $r>0$, we have $\underset{B_r(0)}{\essinf}\: j >0$. 
\end{enumerate}

	\begin{thm}
\label{hopf-simple21}            
Assume (J1), (J2)$_{S}$, (J3),  and let $\Omega\subset\R^{N}$ be an open set. 
Furthermore, let $c \in L^\infty(\Omega)$, and let $u \in \cV^J(\Omega)$ be a variational supersolution of 
\begin{equation*}
Iu= c(x) u\qquad \text{in $\Omega$}
\end{equation*}
satisfying $u \ge 0$ in $\R^N$. Then either $u \equiv 0$ in $\R^N$, or $u$ is strictly positive in $\Omega$.
	\end{thm}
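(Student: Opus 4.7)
The strategy is to reduce to the domain case (Theorem~\ref{hopf-simple2-variant1}) on each connected component of $\Omega$, and then exploit the uniform positivity condition (J2)$_{S}$ to rule out the possibility that $u$ vanishes on one component while being nontrivial elsewhere. Write $\Omega = \bigcup_\alpha \Omega_\alpha$ as the disjoint union of its (at most countably many) connected components, each of which is itself a domain. By the inclusions in (\ref{eq:inclusions}) we have $u \in \cV^J(\Omega) \subset \cV^J(\Omega_\alpha)$, and since $\cD^J(\Omega_\alpha) \subset \cD^J(\Omega)$ the supersolution inequality (\ref{def-super}) for $u$ restricts: for any nonnegative $v \in \cD^J(\Omega_\alpha)$ of bounded support, $\cE_J(u,v) \ge \int_\Omega c u v\,dx = \int_{\Omega_\alpha} c u v\,dx$. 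Theorem~\ref{hopf-simple2-variant1} therefore applies to each $\Omega_\alpha$ and yields the dichotomy: on every component, either $u \equiv 0$ or $u$ is strictly positive. Assuming for contradiction that $u \not\equiv 0$ in $\R^N$ but $u$ fails to be strictly positive in $\Omega$, we obtain a component $\Omega_0$ on which $u \equiv 0$.

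Next I would pick a nonnegative $v \in C_c^\infty(\Omega_0)$, $v \not\equiv 0$, with compact support $K \subset \Omega_0$; by Lemma~\ref{Lipschitz-inclusion} such a $v$ lies in $\cD^J(\Omega)$ and is thus admissible in (\ref{def-super}). Since $u \equiv 0$ on $K$ and $v \equiv 0$ outside $K$, the integrand $(u(x)-u(y))(v(x)-v(y))$ vanishes whenever $x,y \in K$ or $x,y \notin K$. Splitting the remaining two cross regions and using the symmetry of $J$, Fubini gives
\[
\cE_J(u,v) = -\int_{K} v(x) \int_{\R^N \setminus K} u(y)\, J(x,y)\, dy\, dx \le 0.
\]
The variational supersolution inequality yields the opposite inequality $\cE_J(u,v) \ge \int_\Omega c u v\, dx = 0$ (since $u v \equiv 0$), so equality must hold throughout. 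As $v \ge 0$, we deduce that the inner integral $\int_{\R^N \setminus K} u(y) J(x,y)\,dy$ vanishes for a.e.\ $x$ in the open set $\{v > 0\}$, which has positive measure.

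Now invoke (J2)$_{S}$: for each $R>0$ there is $\delta_R>0$ with $j(z) \ge \delta_R$ for a.e.\ $z \in B_R(0)$, and by the definition (\ref{smallj}) of $j$ together with Fubini this implies $J(x,y) \ge \delta_R$ for a.e.\ $(x,y)$ with $|x-y| < R$. Combining the three almost-everywhere statements, for each $R$ I can pick a single $x_0 \in K$ with $v(x_0) > 0$ such that the inner integral at $x_0$ vanishes and $J(x_0, y) \ge \delta_R$ for a.e.\ $y \in B_R(x_0)$; this forces $u(y) = 0$ for a.e.\ $y \in B_R(x_0) \setminus K$. Since $u \equiv 0$ on $K \subset \Omega_0$ anyway, $u \equiv 0$ a.e.\ on $B_R(x_0)$, and letting $R \to \infty$ gives $u \equiv 0$ on $\R^N$, the desired contradiction.

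The main technical hurdle is the rigorous justification of the splitting formula for $\cE_J(u,v)$: one must verify that each of the four regional integrals is well-defined so that the symmetry of $J$ can be combined with Fubini without worrying about conditional convergence. This is where Lemma~\ref{testing} is used, guaranteeing that $\cE_J(u,v)$ is well-defined in the Lebesgue sense for $u \in \cV^J(\Omega) \subset \cV^J(\Omega_0)$ and $v \in \cD^J(\Omega)$ of bounded support. Once the splitting is legitimate, the remainder of the argument is essentially mechanical and is driven entirely by (J2)$_{S}$.
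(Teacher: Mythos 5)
Your argument is correct, but it takes a genuinely different and considerably heavier route than the paper. The paper does not pass through Theorem~\ref{hopf-simple2-variant1} at all: assuming $u \not\equiv 0$ in $\R^N$, it picks a bounded measurable set $K \subset \R^N$ (not necessarily inside $\Omega$) with $|K|>0$ and $\underset{K}{\essinf}\, u>0$, and then, for an arbitrary $x_0 \in \Omega$ and a small ball $M=B_{2r}(x_0)\subset\Omega$, observes that (J2)$_S$ gives $\inf_{x\in M}\int_K J(x,y)\,dy \ge |K|\,\underset{B_R(0)}{\essinf}\, j>0$ for $R$ large; hence $(K,M)$ satisfies (SSP) by Lemma~\ref{SSP-basic} and Lemma~\ref{positivity-inherit} yields strict positivity near $x_0$. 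This is a one-step propagation of positivity from anywhere in $\R^N$ into $\Omega$, needing neither connectedness, nor a component decomposition, nor the lattice-path machinery hidden inside Theorem~\ref{hopf-simple2-variant1}; indeed the paper remarks that this proof is much simpler than that of Theorem~\ref{hopf-simple2-variant1}. Your proof instead invokes the harder theorem componentwise (legitimate, since $\cV^J(\Omega)\subset\cV^J(\Omega_\alpha)$ and $\cD^J(\Omega_\alpha)\subset\cD^J(\Omega)$, and there is no circularity) and then disposes of a vanishing component $\Omega_0$ by the identity $\cE_J(u,v)=-\int_K v(x)\int_{\R^N\setminus K}u(y)J(x,y)\,dy\,dx$ together with the supersolution inequality and the a.e.\ lower bound $J\ge\delta_R$ on $\{|x-y|<R\}$ coming from (J2)$_S$; this forces $u\equiv 0$ on $\R^N$, which is a nice elementary mechanism and essentially reproves positivity propagation by hand rather than via the weak maximum principle. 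A few small points you glossed over but which are easily repaired: Corollary~\ref{Lipschitz-inclusion} is stated for bounded sets, so apply it to a bounded neighborhood of $\supp v$ inside $\Omega_0$; the passage from ``$u$ not strictly positive in $\Omega$'' to ``some component with $u\equiv 0$'' uses that a compact $K'\subset\Omega$ meets only finitely many components and that each $K'\cap\Omega_\alpha$ is clopen in $K'$, hence compact; and since your $x_0$ may depend on $R$ you should either note that $K$ is bounded (so the balls $B_R(x_0(R))$ still exhaust $\R^N$) or fix one $x_0$ working for a countable sequence $R_n\to\infty$.
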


We emphasize that the alternative in Theorem~\ref{hopf-simple21} is stronger than the one in Theorem~\ref{hopf-simple2-variant1}. 
We also remark that, due to local uniform positivity of the kernel $J$ assumed in (J2)$_{S}$, the proof of Theorem~\ref{hopf-simple21} is much simpler than the proof of Theorem~\ref{hopf-simple2-variant1}.

\section{Preliminaries on the functional analytic setting}\label{notation}

In the following we keep using the notation from the previous section, and we assume (J1) throughout this section. The following statement ensures that our definition of variational supersolution of (\ref{general-1}) is well-defined.

\begin{lemma}\label{testing}
	Let $\Omega\subset \R^N$ be open. Then we have 
        \begin{equation}
          \label{eq:inequality-well-defined}
\int_{\R^N} \int_{\R^N} |u(x)-u(y)|\cdot|v(x)-v(y)|J(x,y)\ dxdy\le (2+ \sqrt{2}) \rho(u,\Omega)^{\frac{1}{2}} \cE_J(v,v)^{\frac{1}{2}}<\infty
        \end{equation}
for every $u \in \cV^J(\Omega)$, $v \in \cD^J(\Omega)$. Hence the bilinear form 
$\cE_J$ is well-defined and continuous on $\cV^J(\Omega)\times \cD^J(\Omega)$.
\end{lemma}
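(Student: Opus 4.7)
The plan is to verify the inequality \eqref{eq:inequality-well-defined} by a direct Cauchy--Schwarz estimate after splitting the double integral over $\R^N \times \R^N$ into pieces on which the necessary square-integrability can be controlled by $\rho(u,\Omega)$ or $\cE_J(v,v)$.

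First I would decompose
\[
\R^N \times \R^N = (\Omega \times \R^N) \;\cup\; \bigl((\R^N \setminus \Omega) \times \Omega\bigr) \;\cup\; \bigl((\R^N \setminus \Omega) \times (\R^N \setminus \Omega)\bigr),
\]
noting that the three pieces are disjoint up to a null set. On the last piece we have $v(x) = v(y) = 0$ since $v \in \cD^J(\Omega)$, so that contribution is zero. On the remaining two pieces I would apply Cauchy--Schwarz to the integral of $|u(x)-u(y)|\,|v(x)-v(y)|\,J(x,y)$.

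For the piece $\Omega \times \R^N$, Cauchy--Schwarz produces the two factors $\bigl(\int_{\Omega}\int_{\R^N}(u(x)-u(y))^2 J(x,y)\,dxdy\bigr)^{1/2}$ and $\bigl(\int_{\Omega}\int_{\R^N}(v(x)-v(y))^2 J(x,y)\,dxdy\bigr)^{1/2}$. The first is exactly $\sqrt{2\rho(u,\Omega)}$ by definition, while the second is bounded by $\sqrt{2\cE_J(v,v)}$ because it is a sub-region of the symmetric domain of $\cE_J$. This yields the contribution $2\,\rho(u,\Omega)^{1/2}\cE_J(v,v)^{1/2}$. For the piece $(\R^N \setminus \Omega) \times \Omega$, the $u$-factor is still bounded by $\sqrt{2\rho(u,\Omega)}$ after swapping variables and using $J(x,y)=J(y,x)$ from (J1); the $v$-factor is controlled by $\sqrt{\cE_J(v,v)}$ since, on account of $v \equiv 0$ on $\R^N\setminus \Omega$, the full form $2\cE_J(v,v)$ splits as the $\Omega\times\Omega$ contribution plus twice the $\Omega\times(\R^N \setminus \Omega)$ contribution, so each mixed region contributes at most $\cE_J(v,v)$. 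Combining gives the second contribution $\sqrt{2}\,\rho(u,\Omega)^{1/2}\cE_J(v,v)^{1/2}$, and adding the two produces exactly the stated constant $2+\sqrt{2}$.

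Finiteness of the right-hand side is then immediate: $\rho(u,\Omega)<\infty$ by the definition of $\cV^J(\Omega)$, and $\cE_J(v,v)<\infty$ by the definition of $\cD^J(\Omega)$. In particular the integrand $(u(x)-u(y))(v(x)-v(y))J(x,y)$ is absolutely integrable on $\R^N\times \R^N$, so $\cE_J(u,v)$ is well defined as a Lebesgue integral. Continuity on $\cV^J(\Omega)\times \cD^J(\Omega)$ (with the natural seminorms $\rho(\cdot,\Omega)^{1/2}$ and $\cE_J(\cdot,\cdot)^{1/2}$) follows from the same estimate applied to $\cE_J(u_n-u, v_n) + \cE_J(u, v_n - v)$ via bilinearity.

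I expect no serious obstacle; the only subtle bookkeeping is the asymmetry between the bound $2\rho(u,\Omega)$ available for $u$ on $\Omega\times (\R^N\setminus\Omega)$ and the bound $\cE_J(v,v)$ available for $v$ on the same set (because $v$ vanishes outside $\Omega$), which is precisely what makes the final constant equal to $2+\sqrt 2$ rather than $2+2\sqrt 2$.
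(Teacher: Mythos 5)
Your proof is correct and follows essentially the same route as the paper: split the double integral according to whether the variables lie in $\Omega$ or $\R^N\setminus\Omega$, use $v\equiv 0$ outside $\Omega$ to kill or simplify the exterior pieces, and apply Cauchy--Schwarz on each remaining piece, bounding the $u$-factors by $2\rho(u,\Omega)$ and the $v$-factors by $2\cE_J(v,v)$ resp.\ $\cE_J(v,v)$, which yields the same constant $2+\sqrt{2}$. No gaps; the concluding well-definedness and continuity assertions follow from the bound exactly as in the paper.
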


\begin{proof}
	Let $u\in \cV^J(\Omega)$, $v\in \cD^J(\Omega)$. Then, by the Cauchy-Schwarz inequality,
	\begin{align*}
	&\int_{\R^N} \int_{\R^N} |u(x)-u(y)|\cdot|v(x)-v(y)|J(x,y)\ dxdy
\\
&=  \int_{\Omega} \int_{\R^N} |u(x)-u(y)|\cdot|v(x)-v(y)|J(x,y)\ dxdy+\!\!\int_{\R^N\setminus \Omega}\,\int_{\R^N}|u(x)-u(y)|\cdot|v(x)-v(y)|J(x,y)\ dxdy\\
&\leq 2\rho(u,\Omega)^{\frac{1}{2}}\rho(v,\Omega)^{\frac{1}{2}} +\int_{\Omega}\int_{\R^N\setminus \Omega}|v(x)| |u(x)-u(y)|J(x,y)\ dydx\\
&\leq 2\rho(u,\Omega)^{\frac{1}{2}}\cE_J(v,v)^{\frac{1}{2}}  
+\Bigl(\int_{\Omega}\int_{\R^N \setminus \Omega }|v(x)|^2J(x,y)\ dydx\Bigr)^{\frac{1}{2}}
\Bigl(\int_{\Omega}\int_{\R^N \setminus \Omega }|u(x)-u(y)|^2J(x,y)\ dydx\Bigr)^{\frac{1}{2}}\\
&\leq 2\rho(u,\Omega)^{\frac{1}{2}}\cE_J(v,v)^{\frac{1}{2}}  
+\Bigl(\frac{1}{2}\int_{\R^N}\int_{\R^N}|v(x)-v(y)|^2J(x,y)\ dxdy\Bigr)^{\frac{1}{2}}
\Bigl(\int_{\Omega}\int_{\R^N}|u(x)-u(y)|^2J(x,y)\ dxdy\Bigr)^{\frac{1}{2}}\\
&\leq (2+ \sqrt{2})\rho(u,\Omega)^{\frac{1}{2}}\cE_J(v,v)^{\frac{1}{2}}<\infty.
	\end{align*}
	The continuity of the bilinear form $\cE_J$ on $\cV^J(\Omega)\times \cD^J(\Omega)$ now follows immediately from this bound.
\end{proof}

Next, we collect some elementary estimates for functions in $\cV^J(\Omega)$ and in $\cD^J(\Omega)$.

\begin{lemma}\label{pospart}
	Let $\Omega\subset \R^N$ open and $u\in \cV^J(\Omega)$. Then we have: 
        \begin{enumerate}
\item[(i)] If $u\equiv 0$ on $\R^{N}\setminus \Omega$, then $u\in \cD^{J}(\Omega)$.      
\item[(ii)] $u^{\pm}\in \cV^J(\Omega)$ and $\rho(u^\pm,\Omega)\leq \rho(u,\Omega)$.
\item[(iii)] If $u^+\in \cD^J(\Omega)$ or $u^-\in \cD^J(\Omega)$, then $\cE_J(u^+,u^-)$ is well-defined with $\cE_J(u^+,u^-) \le 0$.
\item[(iv)] If $u \ge 0$ on $\R^N \setminus \Omega$, then $u^- \in \cD^J(\Omega)$ and 
	\begin{equation}\label{eq:key-ineq1}
	\cE_J(u^-,u^-)\leq -\cE_J(u,u^-),
	\end{equation}
where the RHS is well-defined by Lemma~\ref{testing}. 
 \end{enumerate}
\end{lemma}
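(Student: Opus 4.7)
I would prove the four parts in the order stated, since each one feeds into the next.

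For part (i), I would split $\cE_J(u,u) = \frac12\int_{\R^N}\!\int_{\R^N}(u(x)-u(y))^2 J(x,y)\,dxdy$ into the four sub-regions $\Omega\times\Omega$, $\Omega\times(\R^N\setminus\Omega)$, $(\R^N\setminus\Omega)\times\Omega$, $(\R^N\setminus\Omega)\times(\R^N\setminus\Omega)$. The last piece vanishes because $u\equiv 0$ outside $\Omega$, and the two mixed pieces are equal by the symmetry $J(x,y)=J(y,x)$. Comparing this decomposition with the analogous one for $\rho(u,\Omega)=\frac12\int_{\Omega}\!\int_{\R^N}(u(x)-u(y))^2J(x,y)\,dxdy$ (which contains the $\Omega\times\Omega$ piece and one copy of the mixed piece), one gets $\cE_J(u,u)\le 2\rho(u,\Omega)<\infty$, so $u\in\cD^J(\Omega)$. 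For (ii), the maps $s\mapsto s^+$ and $s\mapsto s^-$ are $1$-Lipschitz on $\R$, hence $(u^{\pm}(x)-u^{\pm}(y))^2\le (u(x)-u(y))^2$ pointwise; integrating over $\Omega\times\R^N$ gives $\rho(u^{\pm},\Omega)\le\rho(u,\Omega)$, and $u^{\pm}\in L^2_{loc}(\R^N)$ since $|u^{\pm}|\le|u|$.

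For part (iii), I would observe that by (ii) both $u^+$ and $u^-$ belong to $\cV^J(\Omega)$, so if one of them lies in $\cD^J(\Omega)$, Lemma~\ref{testing} guarantees that $\cE_J(u^+,u^-)$ is well-defined as an absolutely convergent double integral. The sign is a purely pointwise fact: since $u^+(x)u^-(x)=0$ for every $x$, expanding the product gives
\[
(u^+(x)-u^+(y))(u^-(x)-u^-(y)) = -u^+(x)u^-(y) - u^+(y)u^-(x) \le 0,
\]
and integrating against $J\ge 0$ yields $\cE_J(u^+,u^-)\le 0$. For (iv), the hypothesis $u\ge 0$ on $\R^N\setminus\Omega$ means $u^-\equiv 0$ there, so combining (ii) with (i) gives $u^-\in\cD^J(\Omega)$. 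Then writing $u=u^+-u^-$ and using bilinearity of $\cE_J$ (which is legal on $\cV^J(\Omega)\times\cD^J(\Omega)$ by Lemma~\ref{testing}) yields
\[
\cE_J(u,u^-) = \cE_J(u^+,u^-) - \cE_J(u^-,u^-),
\]
and (iii) applied to the first term on the right gives the claimed inequality \eqref{eq:key-ineq1}.

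No step looks genuinely hard; the only point requiring care is to verify well-definedness before manipulating $\cE_J$ bilinearly, which is why (i) and (ii) are done first so that Lemma~\ref{testing} can be invoked in (iii) and (iv). The slickest presentation of (i) is the region-splitting argument above, which simultaneously shows $\cE_J(u,u)\le 2\rho(u,\Omega)$ — a quantitative bound that may be useful elsewhere but is not explicitly stated in the lemma.
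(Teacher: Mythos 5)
Your proposal is correct and follows essentially the same route as the paper: the same region-splitting for (i) with the bound $\cE_J(u,u)\le 2\rho(u,\Omega)$, the same pointwise sign identity for (iii), and the same bilinear decomposition $\cE_J(u,u^-)=\cE_J(u^+,u^-)-\cE_J(u^-,u^-)$ for (iv), with well-definedness secured via Lemma~\ref{testing}. The only (harmless) difference is in (ii), where you use the $1$-Lipschitz bound $(u^{\pm}(x)-u^{\pm}(y))^2\le(u(x)-u(y))^2$ instead of the paper's expansion of $\rho(u^+-u^-,\Omega)$ with the nonpositive cross term; both give the stated inequality.
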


\begin{proof}
(i) By assumption, we have that 
        \begin{align*}
\cE_J(u,u)&=\rho(u,\Omega)+\int_{\R^N\setminus \Omega}\int_{\R^N}(u(x)-u(y))^2J(x,y)\ dxdy\\
&=\rho(u,\Omega)+\int_{\R^N\setminus \Omega}\int_{\Omega}(u(x)-u(y))^2J(x,y)\ dxdy \leq 2\rho(u,\Omega).
        \end{align*}
(ii) For $x,y \in \R^N$ we have 
\begin{equation}
  \label{eq:pointwise}
(u^+(x)-u^+(y))(u^-(x)-u^-(y))=-2\bigl(u^+(x)u^-(y)+u^-(x)u^+(y)\bigr) \le 0.
\end{equation}
Then 
\begin{align*}
\rho(u,\Omega)=& \rho(u^+ - u^-,\Omega)=\rho(u^+,\Omega)+\rho(u^-,\Omega)\\
&\quad -2 \int_{\Omega}\int_{\R^N}(u^+(x)-u^+(y))(u^-(x)-u^-(y))J(x,y)dx dy \\ 
\ge& \rho(u^+,\Omega)+\rho(u^-,\Omega).
\end{align*}
Consequently, $u^\pm \in \cV^J(\Omega)$ and $\rho(u^\pm,\Omega) \le \rho(u,\Omega)$.\\ 
(iii) Since $u^+\in \cD^J(\Omega)$ or $u^-\in \cD^J(\Omega)$ by assumption,
$\cE_J(u^+,u^-)$ is well-defined by Lemma~\ref{testing}. Moreover, by (\ref{eq:pointwise}) we have $\cE_J(u^+,u^-) \le 0$.\\
(iv) Since $u \ge 0$ on $\R^N \setminus \Omega$, the function $u^- \in \cV^J(\Omega)$ satisfies $u^- \equiv 0$ on $\R^N \setminus \Omega$. Consequently 
$u^- \in \cD^J(\Omega)$ by (i). Moreover, we have 
$$
\cE_J(u,u^-)= \cE_J(u^+-u^-,u^-) = \cE_J(u^+,u-)-\cE_J(u^-,u^-)
$$
where all terms are well-defined by Lemma~\ref{testing}. Furthermore, $\cE_J(u^+,u^-) \le 0$ by (iii). Hence (\ref{eq:key-ineq1}) follows.
\end{proof}

Next we give the proof of \eqref{blowup}, which we restate for the reader's convenience.

\begin{prop}
\label{blowup-1}
Let 
$$
\Lambda_{1}(r):= \inf\{\Lambda_1(\Omega)\::\: \text{$\Omega \subset \R^N$ open, $|\Omega|=r$}\}\qquad \text{for $r>0$.}
$$
Then we have 
\begin{equation}
\label{eq:limit-inequality}
\lim_{r \to 0}\Lambda_{1}(r) \ge \int_{R^N}j(z)\,dz.
\end{equation}
\end{prop}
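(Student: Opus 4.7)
The plan is to bound $\cE_J(u,u)$ from below on $\cD^J(\Omega)$ by exploiting that any admissible test function vanishes outside $\Omega$ and that $J(x,x+z) \ge j(z)$ for a.e.\ pair $(x,z)$. More precisely, let $\Omega \subset \R^N$ be open and bounded and let $u \in \cD^J(\Omega)\setminus\{0\}$. Since $u \equiv 0$ on $\R^N \setminus \Omega$, discarding the nonnegative $\Omega \times \Omega$ contribution gives
\[
\cE_J(u,u) \;\ge\; \int_\Omega u(x)^2 \int_{\R^N \setminus \Omega} J(x,y)\,dy\,dx.
\]
By definition of $j$ in \eqref{smallj} and Fubini, for a.e.\ $x \in \R^N$ we have $J(x,x+z) \ge j(z)$ for a.e.\ $z$, hence after the substitution $z = y - x$,
\[
\int_{\R^N \setminus \Omega} J(x,y)\,dy \;=\; \int_{\R^N \setminus (\Omega - x)} J(x,x+z)\,dz \;\ge\; \int_{\R^N \setminus (\Omega - x)} j(z)\,dz.
\]

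Next I would, for fixed $\delta > 0$, truncate near the origin and write
\[
\int_{\R^N \setminus (\Omega - x)} j(z)\,dz \;\ge\; \int_{\{|z|>\delta\}} j(z)\,dz \;-\; \int_{(\Omega - x) \cap \{|z|>\delta\}} j(z)\,dz.
\]
The key observation is that the subtracted term is small uniformly in $x$ once $|\Omega|$ is small. Indeed, (J1) (equivalently (j\,\!1) for the function $j$) implies $j\, 1_{\{|z|>\delta\}} \in L^1(\R^N)$, so by the absolute continuity of the Lebesgue integral, for every $\eps>0$ there exists $\eta>0$ such that $\int_E j(z)\,dz < \eps$ for every measurable $E \subset \{|z|>\delta\}$ with $|E|<\eta$. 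Applied to $E=(\Omega-x)\cap\{|z|>\delta\}$, which has Lebesgue measure at most $|\Omega|$, this yields the uniform bound
\[
\sup_{x\in\Omega}\int_{(\Omega - x) \cap \{|z|>\delta\}} j(z)\,dz \;<\; \eps \qquad \text{whenever } |\Omega|<\eta.
\]

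Combining these estimates, whenever $|\Omega|<\eta=\eta(\delta,\eps)$,
\[
\cE_J(u,u) \;\ge\; \Bigl(\int_{\{|z|>\delta\}} j(z)\,dz - \eps\Bigr) \|u\|_{L^2(\Omega)}^2,
\]
so $\Lambda_1(\Omega) \ge \int_{\{|z|>\delta\}} j(z)\,dz - \eps$ by taking the infimum over $u \in \cD^J(\Omega)$. Consequently $\Lambda_1(r) \ge \int_{\{|z|>\delta\}} j(z)\,dz - \eps$ for all sufficiently small $r>0$. Sending $\eps \to 0$ first yields $\liminf_{r\to 0}\Lambda_1(r) \ge \int_{\{|z|>\delta\}}j(z)\,dz$ for every $\delta>0$, and then sending $\delta \to 0$ and invoking monotone convergence on the nonnegative function $j$ gives the claim, with the understanding that $\lim_{r\to 0}\Lambda_1(r)=\infty$ when $\int_{\R^N}j(z)\,dz = \infty$.

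The only nontrivial point is the uniformity in $x$ of the smallness of $\int_{(\Omega-x)\cap\{|z|>\delta\}} j(z)\,dz$; this is the one place where one cannot just cite $j\in L^1_{\mathrm{loc}}(\R^N \setminus\{0\})$ and must use absolute continuity of the integral against the single function $j\, 1_{\{|z|>\delta\}}$, which is $x$-independent. The truncation parameter $\delta$ is essential because $j$ itself need not be integrable near $0$, and indeed $0 \in \Omega - x$ for every $x \in \Omega$, so one cannot hope to control an integral of $j$ over $\Omega - x$ without first moving away from the singularity.
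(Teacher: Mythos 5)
Your argument is correct and shares the paper's first reduction --- both proofs discard the $\Omega\times\Omega$ contribution to $\cE_J(u,u)$ and use $J(x,x+z)\ge j(z)$ to obtain $\cE_J(u,u)\ge \inf_{x\in\Omega}\bigl(\int_{\R^N\setminus(\Omega-x)}j(z)\,dz\bigr)\|u\|_{L^2(\Omega)}^2$ --- but from there you take a genuinely different route. The paper estimates $\int_{\R^N\setminus(\Omega-x)}j$ via the superlevel sets $M_c=\{j\ge c\}$ and the decreasing rearrangement $d(r)$ of $j$, arriving at the explicit bound $\Lambda_1(r)\ge \int_{\{j<d(r)\}}j\,dz+d(r)\bigl(|M_{d(r)}|-r\bigr)$ (a bathtub-type estimate: the worst case is that $\Omega-x$ covers the region where $j$ is largest), and then passes to the limit by monotone convergence, distinguishing the cases $\esssup j<\infty$ and $\esssup j=\infty$. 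You instead fix a truncation radius $\delta$, note that $j\,1_{\{|z|>\delta\}}\in L^1(\R^N)$ by (J1), and use absolute continuity of its integral to make $\int_{(\Omega-x)\cap\{|z|>\delta\}}j$ small uniformly in $x$ once $|\Omega|$ is small --- correctly observing that the uniformity is free because the dominating function is $x$-independent and $|(\Omega-x)\cap\{|z|>\delta\}|\le|\Omega|$ --- and then send $\eps\to0$ and $\delta\to0$ with monotone convergence. Your route is more elementary (no rearrangement, no case distinction on $\esssup j$), while the paper's computation yields a quantitative lower bound on $\Lambda_1(r)$ for each fixed $r$, which is more than the limit statement requires. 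One small point: as written you only obtain the $\liminf$ inequality, whereas the statement speaks of $\lim_{r\to0}\Lambda_1(r)$; add the paper's one-line remark that $r\mapsto\Lambda_1(r)$ is nonincreasing (since $\cD^J(\Omega_1)\subset\cD^J(\Omega_2)$ for $\Omega_1\subset\Omega_2$), so that the limit exists and your bound applies to it.
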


	\begin{proof}
The proof is a refinement and generalization of the argument in \cite[Lemma 2.7]{JW14}. We first note that 
$$
\Lambda_{1}(r_1)  \ge \Lambda_{1}(r_2)\qquad \text{for $0<r_1<r_2$,}
$$ 
which can easily be deduced from the fact that 
$$
\cD^J(\Omega_1) \subset \cD^J(\Omega_2)\qquad \text{for $\Omega_1,\Omega_2 \subset \R^N$ with $\Omega_1 \subset \Omega_2$.}
$$
Hence the limit in (\ref{eq:limit-inequality}) exists. We also recall that $J(x,y)\geq j(x-y)$ for all $x,y\in \R^N$, $x\neq y$. We set 
		$$
		M_c:= \{ z \in \R^N \::\: j(z) \ge c\} \qquad \text{and}\qquad M^c:= \{ z \in \R^N \::\: j(z) < c\}
		$$
		for $c \in [0,\infty]$. Moreover, we consider the decreasing rearrangement of $j$ given by 
$$
d:(0,\infty) \to [0,\infty], \qquad d(r)= \sup \{c \ge 0 \::\: |M_c| \ge r \}
$$
 We first note that 
		\begin{equation}
		\label{eq:est-dec-rearr-3}
		|M_{d(r)}| \ge r \qquad \text{for every $r>0$} 
		\end{equation}
		Indeed, this is obvious if $d(r)=0$, since $M_0= \R^N $. If $d(r)>0$, we have $|M_c| \ge r$ for every $c < d(r)$ by definition, whereas $|M_c| < \infty$ for every $c>0$ as a 
		consequence of the fact that $j \in L^{1}(\R^{N}\setminus B_1(0))$ by (J1). Consequently, since $M_{d(r)}= \underset{c < d(r)}{\bigcap} M_c$, we have $|M_{d(r)}| =   \inf \limits_{c < d(r)} |M_c| \ge r.$ Next we claim that 
		\begin{equation}
		\label{eq:est-dec-rearr}
		\Lambda_{1}(r)  \ge \int_{M^{d(r)}} j(z)\,dz +  d(r) \Bigl(|M_{d(r)}|- r\Bigr) \qquad \text{for $r>0$.}
		\end{equation}
		Indeed, let $r>0$ and $\Omega \subset \R^N$ be measurable with $|\Omega|=r$. 
		For $u\in \cD^{J}(\Omega)$ we have
		\begin{align}
		\cE_J(u,u)&=\frac{1}{2}\int_{\R^{N}}\int_{\R^{N}}(u(x)-u(y))^2J(x,y)\ dxdy \nonumber \\
		&\geq\frac{1}{2}\int_{\Omega}\int_{\Omega}(u(x)-u(y))^2 j(x-y)\ dxdy+\int_{\Omega}u^2(x)\int_{\R^{N}\setminus \Omega} j(x-y)\ dy \ dx  \nonumber\\
		&\geq \inf_{x\in \Omega}\biggl(\;\int_{\;\R^{N}\setminus \Omega_x} j(z)\ dz\biggr)\|u\|^2_{L^{2}(\Omega)} \label{eq:est-dec-rearr-4}
		\end{align}
		with $\Omega_x:=x+\Omega$. Let $d:=d(r)$. We then have 
$$
|M_d \setminus \Omega_x| -|\Omega_x \setminus M_d| = |M_d|- |M_d \cap \Omega_x| -\Bigl(|\Omega_x|- |M_d \cap \Omega_x|\Bigr)= |M_d|-|\Omega_x|= |M_d|-r
$$
for every $x \in \Omega$ and thus 
		\begin{align*}
		\int_{\R^{N}\setminus \Omega_x} j(y)\ dy&=\int_{\R^N \setminus M_d} j(y)\ dy +\int_{M_d\setminus \Omega_x}j(y)\ dy- \int_{\Omega_{x}\setminus M_d}j(y)\ dy\\
		&\ge \int_{M^d} j(y)\ dy + d \Bigl(|M_d\setminus \Omega_x| - 
		|\Omega_{x}\setminus M_d|\Bigr) = \int_{M^d} j(y)\ dy +d(|M_d|-r). 
		\end{align*}
		Combining this with (\ref{eq:est-dec-rearr-4}), we obtain (\ref{eq:est-dec-rearr}), as  claimed. Now, by definition, the decreasing rearrangement of $j$ satisfies $d(r) \to d_0:= \underset{\R^N}{\esssup}\: j$ as $r \to 0$ and thus, by monotone convergence  
		$$
   \lim_{r\to 0} \int_{M^{\,d(r)}} j(y)\ dy  =    \lim_{r\to 0} \int_{\{j <d(r)\}} j(y)\ dy = \int_{\{j <d_0\}} j(y)\ dy
= \int_{M^{\,d_0}} j(y)\ dy.
$$
Together with (\ref{eq:est-dec-rearr}), this shows that 
$$
\lim_{r \to 0} \Lambda_{1}(r) \ge \liminf_{r\to 0} \Bigl(\int_{M^{\,d(r)}} j(y)\ dy  + d(r)(|M_{d(r)}|-r)\Bigr)
=\int_{M^{\,d_0}} j(y)\ dy + \liminf_{r\to 0} d(r)(|M_{d(r)}|-r)\Bigr)
$$
In the case where $d_0< \infty$, we have, by monotone convergence,
$$
\lim_{r\to 0} d(r)(|M_{d(r)}|-r) = d_0 |M_{d_0}| 
$$
and thus
$$
\lim_{r \to 0} \Lambda_{1}(r) \ge \int_{M^{\,d_0}} j(y)\ dy + d_0 |M_d| = \int_{\R^N}j(z)\,dz,
$$
as claimed. In the case where $d_0= \infty$ we have 
$$
\int_{M^{\,d_0}} j(y)\ dy = \int_{\R^N} j(z)\,dz 
$$
and thus also   
$$
\lim_{r \to 0} \Lambda_{1}(r) \ge  \int_{\R^N}j(z)\,dz.
$$
The proof is finished.
	\end{proof}

In the following, we note some useful inclusions of function spaces.

\begin{lemma}\label{cutoff}
	Let $\Omega\subset \R^N$ be a bounded open set, $\phi: \R^N \to \R$ be a Lipschitz function with $\phi \equiv 0$ in $\R^N \setminus \Omega$ and $u\in \cV^J(\Omega)$. Then $\phi u\in \cD^J(\Omega)$.
\end{lemma}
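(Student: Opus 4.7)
The plan is to show that $\phi u \in \cV^J(\Omega)$ and $\phi u \equiv 0$ on $\R^N \setminus \Omega$; Lemma~\ref{pospart}$(i)$ then delivers $\phi u \in \cD^J(\Omega)$. The vanishing outside $\Omega$ is immediate from $\phi \equiv 0$ there, and $\phi u \in L^2_{loc}(\R^N)$ follows from $u \in L^2_{loc}(\R^N)$ together with the boundedness of $\phi$ (a Lipschitz function vanishing outside the bounded set $\Omega$ is necessarily bounded). Thus everything reduces to proving the quantitative bound $\rho(\phi u, \Omega) < \infty$.

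The key tool is the Leibniz-type identity
$$
\phi(x) u(x) - \phi(y) u(y) \; = \; \phi(x)\bigl(u(x)-u(y)\bigr) \; + \; u(y)\bigl(\phi(x)-\phi(y)\bigr),
$$
which, combined with $(a+b)^2 \le 2a^2 + 2b^2$, yields a pointwise bound on $(\phi u(x) - \phi u(y))^2$. The summand $2\phi(x)^2 (u(x)-u(y))^2$, integrated against $J$ over $\Omega \times \R^N$, is dominated by $4\|\phi\|_\infty^2\, \rho(u,\Omega) < \infty$. For the remaining summand $2 u(y)^2(\phi(x)-\phi(y))^2$, boundedness and Lipschitz continuity of $\phi$ yield $(\phi(x)-\phi(y))^2 \le C (1 \wedge |x-y|^2)$ for a constant $C$ depending only on $\|\phi\|_\infty$ and the Lipschitz constant of $\phi$.

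To handle this second summand I would fix $R \ge 1$ with $\Omega \subset B_R(0)$ and split the $y$-integration into $B_{2R}(0)$ and its complement. On the near part $y \in B_{2R}$, Fubini together with (J1) and $u \in L^2(B_{2R})$ gives finiteness. On the far part, $|x-y| \ge R \ge 1$ for $x \in \Omega$ forces $1 \wedge |x-y|^2 = 1$, so the contribution reduces to $\int_\Omega \int_{\R^N \setminus B_{2R}} u(y)^2\, J(x,y)\, dy\,dx$; to bound this, I would apply the absorption inequality $u(y)^2 \le 2 u(x)^2 + 2(u(x)-u(y))^2$, which controls the result by $4 \rho(u,\Omega) + 2 C_J \|u\|_{L^2(\Omega)}^2$ via a further application of (J1).

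The main subtlety is precisely this far-field estimate: since $u$ is only $L^2_{loc}(\R^N)$ rather than globally $L^2$, the Lipschitz bound on $\phi$ alone does not close the estimate, and one must trade the unknown tail behaviour of $u(y)$ against the finite quantity $\rho(u,\Omega)$ via the absorption inequality above. Everything else is routine.
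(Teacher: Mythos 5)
Your argument is correct, and the overall skeleton (establish $\rho(\phi u,\Omega)<\infty$, note $\phi u\equiv 0$ outside $\Omega$, then invoke Lemma~\ref{pospart}(i)) is the same as in the paper; the difference lies in which Leibniz splitting you choose. You write $\phi(x)u(x)-\phi(y)u(y)=\phi(x)\bigl(u(x)-u(y)\bigr)+u(y)\bigl(\phi(x)-\phi(y)\bigr)$, which puts the factor $u(y)^2$ at the point ranging over all of $\R^N$; since $u$ is only in $L^2_{loc}$, this forces the extra near/far decomposition and the absorption inequality $u(y)^2\le 2u(x)^2+2(u(x)-u(y))^2$ on the far range, and you correctly identify and close this gap (the far-field term is controlled by $C_J\|u\|_{L^2(\Omega)}^2$ and $\rho(u,\Omega)$, the near one by Fubini, symmetry of $J$ and (J1)). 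The paper instead uses the other splitting, $u(x)\phi(x)-u(y)\phi(y)=u(x)\bigl(\phi(x)-\phi(y)\bigr)+\phi(y)\bigl(u(x)-u(y)\bigr)$, so that the square of $u$ is evaluated at $x\in\Omega$; then the inner $y$-integral of $[\phi(x)-\phi(y)]^2J(x,y)\le C_\phi^2\,(1\wedge|x-y|^2)J(x,y)$ is bounded by $C_J$ directly, and one gets $\rho(\phi u,\Omega)\le C_\phi^2 C_J\|u\\|_{L^2(\Omega)}^2+2\|\phi\|_{L^\infty(\R^N)}^2\rho(u,\Omega)$ in two lines, with no case distinction. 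So your proof buys nothing extra but costs an additional (avoidable) step; choosing the splitting that attaches $u$ to the variable confined to $\Omega$ is the cleaner route.
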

\begin{proof}
By assumption, there exists a constant $C_\phi>0$ with 
$$
|\phi(x)-\phi(y)|\le C_\phi  (1 \wedge |x-y|)
\quad \text{for $x,y \in \R^N$.}
$$
Consequently, 
	\begin{align*}
	\rho(\phi u,\Omega) &= \frac{1}{2}\int_{\Omega}\int_{\R^N} [u(x)\phi(x)-u(y)\phi(y)]^2J(x,y)\ dydx\\
        &= \frac{1}{2}\int_{\Omega}\int_{\R^N} \bigl[u(x)\bigl(\phi(x)-\phi(y)\bigr)+ \phi(y)\bigl(u(x)-u(y)\bigr)\bigr]^2J(x,y)\ dydx\\
	&\leq  \int_{\Omega}u(x)^2 \int_{\R^N}[\phi(x)-\phi(y)]^2J(x,y)\ dy dx+ \int_{\Omega}\int_{\R^N} \phi(y)^2[(u(x)-u(y)]^2J(x,y)\ dydx \\
	&\leq  C_\phi^2 \int_{\Omega}u(x)^2 \int_{\R^N} 1 \wedge |x-y|^2 J(x,y)\ dy dx + \|\phi\|_{L^\infty(\R^N)}^2 
\int_{\Omega}\int_{\R^N} [(u(x)-u(y)]^2J(x,y)\ dy dx \\
	&\leq C_\phi^2 C_J \|u\|_{L^2(\Omega)}^2 + 2 \|\phi\|_{L^\infty(\R^N)}^2 \rho(u,\Omega) < \infty
	\end{align*}
with $C_J$ given in (J1). Hence $\phi u\in \cV^J(\Omega)$. Since $\phi u \equiv 0$ on $\R^N \setminus \Omega$ by assumption, Lemma \ref{pospart}(i) now implies that $\phi u\in \cD^J(\Omega)$.
\end{proof} 
 
\begin{cor}\label{Lipschitz-inclusion}
Let $\Omega \subset \R^N$ be a bounded open subset. If $\phi: \R^N \to \R$ is a Lipschitz function with $\phi \equiv 0$ on $\R^N \setminus \Omega$, then $\phi \in \cD^J(\Omega)$.   
\end{cor}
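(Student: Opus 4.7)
The plan is to reduce the corollary directly to Lemma \ref{cutoff} by the choice $u \equiv 1$. Indeed, the constant function $u \equiv 1$ on $\R^N$ is certainly in $L^2_{loc}(\R^N)$, and $\rho(u,\Omega) = 0$ since the integrand $(u(x)-u(y))^2$ vanishes identically. Hence $u \in \cV^J(\Omega)$. Applying Lemma \ref{cutoff} to the product $\phi \cdot u = \phi$ then yields $\phi \in \cD^J(\Omega)$ at once.

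Alternatively, one can argue directly without invoking the product structure. Using that $\phi$ is Lipschitz on $\R^N$ with some constant $L$ and bounded (because it vanishes outside the bounded set $\Omega$), one gets the elementary pointwise bound
\[
(\phi(x)-\phi(y))^2 \le C\,\bigl(1 \wedge |x-y|^2\bigr), \qquad x,y \in \R^N,
\]
with $C := \max\{L^2,\,4\|\phi\|_{L^\infty(\R^N)}^2\}$. Feeding this into the definition of $\rho(\phi,\Omega)$ and invoking the integrability assumption (J1) together with the boundedness of $\Omega$ gives
\[
\rho(\phi,\Omega) \le \tfrac{C}{2}\int_\Omega \int_{\R^N} \bigl(1\wedge|x-y|^2\bigr) J(x,y)\,dy\,dx \le \tfrac{C\, C_J}{2}\,|\Omega| < \infty,
\]
so $\phi \in \cV^J(\Omega)$. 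Since $\phi \equiv 0$ on $\R^N \setminus \Omega$ by hypothesis, Lemma \ref{pospart}(i) then promotes this to $\phi \in \cD^J(\Omega)$.

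There is no real obstacle here: the work has already been done in Lemma \ref{cutoff} and Lemma \ref{pospart}(i), and boundedness of $\Omega$ ensures the finite total mass factor $|\Omega|$ appearing after invoking (J1). The only mild point worth flagging is that $\phi$ is assumed Lipschitz on all of $\R^N$, so its trivial vanishing behavior outside $\Omega$ is already built in and no additional extension step is required.
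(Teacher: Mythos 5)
Your first argument is exactly the paper's proof: apply Lemma~\ref{cutoff} with $u \equiv 1 \in \cV^J(\Omega)$. The alternative direct estimate (using the Lipschitz and boundedness of $\phi$ to get $(\phi(x)-\phi(y))^2 \le C(1\wedge|x-y|^2)$, then (J1), $|\Omega|<\infty$, and Lemma~\ref{pospart}(i)) is also correct, but it merely unwinds the same computation already contained in Lemma~\ref{cutoff}, so there is nothing to add.
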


\begin{proof}
This follows from Lemma~\ref{cutoff} applied to $u \equiv 1 \in \cV^J(\Omega)$.
\end{proof}
 
\begin{lemma}
\label{Lipschitz-V-J}  
Let $\Omega\subset \R^N$ be a an open set, and let $u \in L^\infty(\R^N)$ be locally Lipschitz in $\Omega$. Then $u \in V^J(\Omega')$ for all open sets $\Omega' \subset \subset \Omega$. 
\end{lemma}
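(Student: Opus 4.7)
Since $u\in L^\infty(\R^N)$, we have $u\in L^2_{loc}(\R^N)$, so the only task is to show that $\rho(u,\Omega')<\infty$ for every open $\Omega'\subset\subset\Omega$.

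The plan is to split the inner integral in the definition of $\rho(u,\Omega')$ at a carefully chosen scale $r>0$: near the diagonal I exploit the Lipschitz bound, away from the diagonal I exploit boundedness, and both pieces are then controlled by the single quantity $C_J$ from assumption (J1).

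First I would extract a uniform Lipschitz constant on a neighborhood of $\overline{\Omega'}$. Since $\overline{\Omega'}\subset\Omega$ is compact, I can pick $r_0>0$ with $K:=\overline{B_{r_0}(\Omega')}\subset\Omega$. For each $x\in K$, local Lipschitzness gives a ball $B_{r_x}(x)\subset\Omega$ on which $u$ is Lipschitz with some constant $L_x$. Covering $K$ by finitely many balls $B_{r_{x_i}/2}(x_i)$ and setting $r:=\min_i r_{x_i}/2\wedge 1\wedge r_0$ and $L:=\max_i L_{x_i}$, an elementary argument shows that for every $x\in\Omega'$ and every $y\in B_r(x)$, one has $y\in B_{r_{x_i}}(x_i)\subset\Omega$ (for the index $i$ with $x\in B_{r_{x_i}/2}(x_i)$), hence $|u(x)-u(y)|\le L|x-y|$.

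Next I would estimate, for each $x\in\Omega'$,
\[
\int_{\R^N}(u(x)-u(y))^2 J(x,y)\,dy = \int_{B_r(x)}(\cdots) + \int_{\R^N\setminus B_r(x)}(\cdots).
\]
On $B_r(x)$, using $|u(x)-u(y)|\le L|x-y|$ and $r\le 1$, the integrand is bounded by $L^2\bigl(1\wedge|x-y|^2\bigr)J(x,y)$, so this piece is at most $L^2 C_J$. On the complement, $|u(x)-u(y)|\le 2\|u\|_{L^\infty(\R^N)}$, and since $r\le 1$, on $\R^N\setminus B_r(x)$ one has $1\le r^{-2}\bigl(1\wedge|x-y|^2\bigr)$ (splitting the cases $|x-y|\le 1$ and $|x-y|>1$); hence this piece is bounded by $4\|u\|_{L^\infty(\R^N)}^2 C_J/r^2$. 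Combining,
\[
\int_{\R^N}(u(x)-u(y))^2 J(x,y)\,dy \le \Bigl(L^2+\frac{4\|u\|_{L^\infty(\R^N)}^2}{r^2}\Bigr) C_J
\quad\text{for all }x\in\Omega'.
\]

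Finally, since $\Omega'$ is bounded (as $\overline{\Omega'}$ is compact), integrating over $x\in\Omega'$ gives
\[
\rho(u,\Omega') \le \frac{|\Omega'|}{2}\Bigl(L^2+\frac{4\|u\|_{L^\infty(\R^N)}^2}{r^2}\Bigr) C_J < \infty,
\]
so $u\in\cV^J(\Omega')$. There is no serious obstacle here; the only subtlety is extracting a \emph{uniform} Lipschitz constant on a neighborhood of $\overline{\Omega'}$ from the merely local hypothesis, which is a standard compactness argument.
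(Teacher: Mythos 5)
Your proposal is correct and follows essentially the same route as the paper: the paper simply compresses your near/far splitting into the single pointwise bound $|u(x)-u(y)|\le C\,(1\wedge|x-y|)$ for $x\in\Omega'$, $y\in\R^N$ (leaving the uniform local Lipschitz constant, which you justify by a covering argument, implicit), and then concludes $2\rho(u,\Omega')\le C^2|\Omega'|C_J<\infty$ exactly as you do.
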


\begin{proof}
Let $\Omega' \subset \subset \Omega$ be an open set. By assumption, there exists a constant $C = C(u,\Omega')>0$ with 
$$
|u(x)-u(y)|\le C  (1 \wedge |x-y|)
\quad \text{for $x \in \Omega',\: y\in \R^N$.}
$$
Consequently, by (J1) we have 
\[
2\rho(u,\Omega') = \int_{\Omega'}\int_{\R^N} [u(x)- u(y)]^2J(x,y)\ dydx \le  C^2 \int_{\Omega'} \int_{\R^N} 1 \wedge |x-y|^2 J(x,y)\ dydx\le C^2|\Omega'| C_J< \infty,
\]
so that $u \in V^J(\Omega')$.
\end{proof}

Next, we define the space of functions $u \in \cV^J(\Omega)$ such that $I u$ is well-defined as an element in the topological dual $L^1(\Omega)^* \cong L^\infty(\Omega)$ of $L^1(\Omega)$.

\begin{defi}	\label{DJinfty}
 Let $\Omega \subset \R^N$ be an open set. Denote by $\cV^J_{\infty}(\Omega)$ the space of all functions $u \in \cV^J(\Omega)$ such that there exists a constant $C=C(u)>0$ with 
	\[
	|\cE_J(u,\phi)| \le C \int_{\Omega} |\phi(x)|\,dx \qquad \text{for all $\phi \in \cD^J(\Omega)\cap L^1(\Omega)$.}
	\]
	Moreover, for $u\in \cV^J_{\infty}(\Omega)$ we put
$$
	\|u\|_{\cV^{J}_{\infty}(\Omega)}:=\sup \bigl\{|\cE_J(u,\varphi)| \::\: \varphi \in \cD^J(\Omega) \cap L^1(\Omega),\: \int_{\Omega}|\varphi(x)|\,dx = 1 \bigr\}
$$
\end{defi}

 We note that, for $\Omega\subset \R^N$ open, $\|\cdot\|_{\cV^{J}_{\infty}(\Omega)}$ defines a semi-norm on $\cV^J_{\infty}(\Omega)$. Moreover, 
 \begin{equation}
   \label{eq:basic-est-V-infty-L-1}
	\cE_J(u,\varphi) \leq \|u\|_{\cV^{J}_{\infty}(\Omega)} \|\varphi\|_{L^1(\Omega)}\qquad\text{for $u\in \cV^J_{\infty}(\Omega)$ and $\phi \in \cD^J(\Omega) \cap L^1(\Omega)$.}
    \end{equation}
We also recall that $\cD^J(\Omega) \cap L^1(\Omega)$ is dense in $L^1(\Omega)$ by Corollary~\ref{Lipschitz-inclusion}. Hence, for fixed $u \in \cV^J_{\infty}(\Omega)$, the map $\varphi \mapsto \cE_J(u,\varphi)$ extends to a continuous linear form in $L^1(\Omega)^* \cong L^\infty(\Omega)$. In the case where $u \in C^2_c(\R^N)$ and (J3) holds, we shall see in the following lemma that the associated element in $L^\infty(\Omega)$ is precisely given by $Iu$ in the sense of (\ref{eq:def-Iu-formal-0-J}). We also note that 
\begin{equation}
  \label{eq:inclusions-1}
 \cV^J_\infty(\R^N) \subset \cV^J_\infty(\Omega)\qquad \text{for every open subset $\Omega \subset \R^N$.}   
\end{equation}

\begin{lemma}\label{representation}
	Assume (J3). Then $C^{2}_c(\R^N)\subset \cV^J_{\infty}(\R^N)$, and we have 
	\begin{equation}\label{represent-equal}
	\cE_J(u,v)=\int_{\Omega} [Iu](x)v(x)\ dx\quad\text{ for all $u\in C^2_c(\R^N)$ and $v \in \cD^J(\R^N) \cap L^1(\R^N)$,} 
	\end{equation}
	where $Iu\in L^\infty(\R^N) \cap C(\R^N)$ is given by (\ref{eq:def-Iu-formal-0-J}) for $x \in \R^N$.
\end{lemma}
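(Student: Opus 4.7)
The plan is to establish the three assertions in order: well-definedness and boundedness of $Iu(x)$ as a pointwise principal value, the integration-by-parts type identity $\cE_J(u,v) = \int Iu\cdot v\,dx$, and finally the inclusion $C^2_c(\R^N)\subset \cV^J_\infty(\R^N)$ together with continuity of $Iu$.

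First I would fix $u\in C^2_c(\R^N)$ with support in some ball $B_R$ and analyze
\[
I^\eps u(x) = \int_{\R^N\setminus B_\eps(x)}(u(x)-u(y))J(x,y)\,dy
= \int_{\R^N\setminus B_\eps(0)}(u(x)-u(x+z))J(x,x+z)\,dz.
\]
I split the domain into $\eps<|z|\le 1$ and $|z|>1$. On the outer part, $|u(x)-u(x+z)|\le 2\|u\|_\infty$ and $\int_{|z|>1}J(x,x+z)\,dz\le C_J$ by (J1), giving a uniform bound and allowing $\eps\to 0$ by dominated convergence. On the inner part, I Taylor-expand
\[
u(x+z) = u(x) + \nabla u(x)\cdot z + \tfrac12 z^{T}D^2u(\xi_{x,z})z,
\]
so that the quadratic remainder contributes at most $\tfrac12\|D^2u\|_\infty\int_{|z|\le 1}|z|^2 J(x,x+z)\,dz\le \tfrac12\|D^2u\|_\infty C_J$. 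The first-order term is the delicate one: it is an odd integrand paired with an essentially symmetric kernel, so by the substitution $z\mapsto -z$ and averaging I rewrite
\[
\int_{\eps<|z|\le 1}\!\!-\nabla u(x)\cdot z\, J(x,x+z)\,dz
= -\tfrac12\int_{\eps<|z|\le 1}\!\!\nabla u(x)\cdot z\,[J(x,x+z)-J(x,x-z)]\,dz,
\]
which by (J3) is bounded by $\tfrac12\|\nabla u\|_\infty\int_{|z|\le 1}|z|\,|J(x,x+z)-J(x,x-z)|\,dz$, and the integrand is dominated by an $L^1(dz)$ function uniformly in $x$ and $\eps$. Hence $I^\eps u(x)$ converges as $\eps\to 0$ to a limit $Iu(x)$, and
\[
\sup_{x\in\R^N}|Iu(x)|\le C(u)
\]
for a constant $C(u)$ depending only on $\|u\|_\infty,\|\nabla u\|_\infty,\|D^2u\|_\infty$, $C_J$ and the (J3) constant.

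Next, to prove the identity, I start from the symmetric truncation
\[
\cE_J^\eps(u,v):=\tfrac12\iint_{|x-y|>\eps}(u(x)-u(y))(v(x)-v(y))J(x,y)\,dxdy.
\]
By the symmetry $J(x,y)=J(y,x)$ from (J1) and Fubini (which is justified since, for fixed $\eps>0$, the integrand is absolutely integrable using $u\in C^2_c$ and $v\in L^1\cap \cD^J$), I swap $x$ and $y$ in the $u(y)v(y)$ part and obtain
\[
\cE_J^\eps(u,v) = \int_{\R^N} v(x)\, I^\eps u(x)\,dx.
\]
Passing to the limit $\eps\to 0$ on both sides is then straightforward: on the left, the integrand $(u(x)-u(y))(v(x)-v(y))J(x,y)$ is absolutely integrable on $\R^N\times\R^N$ by Lemma~\ref{testing}, so dominated convergence gives $\cE_J^\eps(u,v)\to\cE_J(u,v)$; on the right, the uniform bound $|I^\eps u(x)|\le C(u)$ together with pointwise convergence $I^\eps u(x)\to Iu(x)$ and $v\in L^1(\R^N)$ yields $\int v\cdot I^\eps u\to\int v\cdot Iu$.

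Finally, the identity combined with the uniform bound gives
\[
|\cE_J(u,\varphi)| \le C(u)\,\|\varphi\|_{L^1(\R^N)}\quad \text{for all }\varphi\in\cD^J(\R^N)\cap L^1(\R^N),
\]
so $u\in\cV^J_\infty(\R^N)$. Continuity of $Iu$ follows by one more dominated convergence argument using the same $\eps$-uniform bounds as $x$ varies in any compact set. The main technical obstacle is the first-order Taylor term: without (J3) the integral $\int \nabla u(x)\cdot z\, J(x,x+z)\,dz$ over $|z|\le 1$ need not have an absolutely convergent principal value, and it is precisely the symmetry-based rewriting together with (J3) that saves the day; everything else is a bookkeeping exercise of splitting regions and invoking dominated convergence.
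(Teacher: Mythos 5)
Your argument is correct and is essentially the paper's proof: the same symmetrization $z\mapsto-z$ over the symmetric region turns the first-order Taylor term into one controlled by (J3), while (J1) controls the Hessian remainder and the far field (the paper packages this as a uniform Cauchy estimate for $I_{\eps'}u-I_\eps u$ via the second difference $2u(x)-u(x+z)-u(x-z)$, which is the same computation as your direct decomposition), and the identity \eqref{represent-equal} is obtained exactly as in the paper by unfolding the symmetric truncation using $J(x,y)=J(y,x)$ and passing to the limit, with Lemma~\ref{testing} dominating the left-hand side and the $\eps$-uniform bound on $I^\eps u$ together with $v\in L^1(\R^N)$ handling the right-hand side; the deduction $u\in\cV^J_\infty(\R^N)$ from $|\cE_J(u,\varphi)|\le \|Iu\|_{L^\infty}\|\varphi\|_{L^1}$ is also the intended one.

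The only step I would not accept as written is the continuity of $Iu$: a dominated convergence argument in the $x$-variable does not apply, because $x\mapsto J(x,x+z)$ is merely measurable, so the integrand need not converge for a.e.\ $z$ as $x_n\to x$. The repair is already implicit in your estimates: since (J1) and (J3) are suprema over $x$, all your error bounds are uniform in $x$, hence $I^\eps u\to Iu$ uniformly on $\R^N$; continuity of $Iu$ then follows once each truncation $I^\eps u$ is known to be continuous, which is what the paper asserts (and is clear, for instance, in the translation-invariant case $J(x,y)=j(x-y)$, where $I^\eps u$ is $u$ times a constant minus a convolution of $u$ with an $L^1$ kernel). State the continuity argument this way rather than as a bare dominated-convergence claim.
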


\begin{proof}
Fix $u\in C^2_c(\R^N)$. Then we may write 
	\[
	u(x+z)=u(x)+\nabla u(x)\cdot z + g(x,z) \qquad \text{for $x,z \in \R^N$}
	\]
with a function $g:\R^N\times \R^N\to \R$ satisfying 
$$
C:= \sup_{x,z \in \R^N}\frac{|g(x,y)|}{|z|^2}<\infty.
$$
Consequently, we also have  
$$
|2u(x)-u(x-z)-u(x+z)|\leq 2 C|z|^2 \qquad\text{for all $x,z\in \R^N$.} 
$$
For $0<   \eps < \eps' \le 1$ and $x \in \R^N$, we now write 
	\begin{align*}
	I_{\epsilon}u(x)&:=\int_{\R^N\setminus B_{\epsilon}(0)}(u(x)-u(x+z))J(x,x+z)\ dz\qquad \text{ and }\\
	K_{\epsilon',\epsilon}(x)&:=I_{\epsilon'}u(x)-I_{\epsilon}u(x)=\int_{B_{\epsilon'}(0)\setminus B_{\epsilon}(0)}(u(x)-u(x+z))J(x,x+z)\ dz.
	\end{align*}
It readily follows from assumption (J1) that $I_\eps u \in L^\infty(\R^N) \cap C(\R^N)$ for every $\eps>0$. We also have
	\begin{align*}
	K_{\epsilon',\epsilon}(x) &=\frac{1}{2} \int_{B_{\epsilon'}(0)\setminus B_{\epsilon}(0)}(u(x)-u(x+z))J(x,x+z)+(u(x)-u(x-z))J(x,x-z)\ dz \\
	&=\frac{1}{2}\int_{B_{\epsilon'}(0)\setminus B_{\epsilon}(0)}\Bigl(2u(x)-u(x+z)-u(x-z))J(x,x-z)\\
	&\qquad\qquad\qquad\qquad\qquad +(u(x)-u(x+z))(J(x,x+z)-J(x,x-z)\Bigr)\ dz
 \end{align*}
and thus 
\begin{align*}
|K_{\epsilon',\epsilon}(x)|&\leq C\int_{B_{\epsilon'}(0)\setminus B_{\epsilon}(0)}|z|^2J(x,x-z)\ dz+
\frac{\|\nabla u\|_{L^\infty}}{2}\int_{B_{\epsilon'}(0)\setminus B_{\epsilon}(0)}|z| |J(x,x+z)-J(x,x-z)| dz 
\end{align*}
It thus follows from assumptions (J1) and (J3) that 
	\[
	\lim_{\epsilon'\to0}\; \sup_{\epsilon\in(0,\epsilon')}|K_{\epsilon',\epsilon}(x)|=0 \qquad \text{uniformly in $x \in \R^N$.}
	\]
Hence the limit $Iu(x)=\lim\limits_{\epsilon\to0}I_{\epsilon} u(x)$ exists for all $x \in \R^N$, and $Iu\in L^\infty(\R^N) \cap C(\R^N)$. Now to see \eqref{represent-equal}, we consider $v\in \cD^J(\R^N) \cap L^1(\R^N)$ and apply Lebesgue's Theorem to get 
	\begin{align*}
	\cE_J(u,v)&=\frac{1}{2} \lim_{\epsilon\to0}\: \underset{|x-y| \ge \eps}{\int \int} (u(x)-u(y))(v(x)-v(y))J(x,y)\ dxdy\\
        &= \lim_{\epsilon\to 0} \int_{\R^N} v(x) \int_{\R^N \setminus B_\eps(x)} (u(x)-u(y)) J(x,y)\ dy dx\\     
	&= \int_{\R^N} v(x) \:\lim_{\eps \to 0}\, \int_{\R^N \setminus B_\eps(0)} (u(x)-u(y)) J(x,y)\ dzdx =\int_{\R^N} [Iu](x) v(x)\ dx,
	\end{align*}
	as claimed.
\end{proof}

\section{Proof of weak maximum principles}
\label{section-weak-mp}
This section is devoted to the proof of weak maximum principles. More precisely, we complete the proofs of Theorems~\ref{weak-max-intro},
Theorem~\ref{wmp1} and Theorem~\ref{weak-max-small-volume}. We begin with the 

\begin{proof}[Proof of Theorem~\ref{wmp1}(completed)]
Let $u \in \cV^J(\Omega)$ be a variational supersolution of (\ref{general-1}) satisfying (\ref{wmp1-cond-1}). Then $u^-\in \cD^J(\Omega)$ by Lemma~\ref{pospart}(iv), and
\begin{align*}
-\Lambda_1(\Omega)\|u^-\|_{L^2(\Omega)}^2&\geq -\cE_J(u^-,u^-)\geq \cE_J(u,u^-)\geq \int_{\Omega} \bigl(c(x) u u^- + g u^-\bigr)\,dx\\
&\ge -\int_{\Omega}c^+(x)[u^-]^2(x)\ dx-\int_{\Omega}g^-(x)u^-(x)\ dx\\
&\ge -\|c^+\|_{L^\infty(\Omega)}\|u^-\|_{L^2(\Omega)}^2 - 
\|u^-\|_{L^2(\Omega)}\|g^-\|_{L^2(\Omega)}.
\end{align*}
Since $\|c^+\|_{L^\infty(\Omega)} < \Lambda_1(\Omega)$ by assumption, we thus conclude that $\|u^-\|_{L^2(\Omega)}\leq \frac{\|g^{-}\|_{L^2(\Omega)}}{\Lambda_1(\Omega)-\|c^+\|_{L^{\infty}(\Omega)}}$, as claimed.
\end{proof}

\begin{proof}[Proof of Theorem~\ref{weak-max-small-volume}(completed)]
Combining assumption~(\ref{eq:assumption-j3}) with Lemma~\ref{blowup-1}, we find $r>0$ such that $\|c^+\|_{L^{\infty}(\R^N)}<\Lambda_{1}(\Omega)$ for every open set $\Omega \subset \R^N$ with $|\Omega|<r$. Fix $\Omega$ with this property and let $u \in \cV^J(\Omega)$ be a variational supersolution of (\ref{general-1-1}). Applying Theorem~\ref{wmp1} with $g \equiv 0$ then yields $u \ge 0$ in $\Omega$, so that $u \ge 0$ in $\R^N$ by assumption. The proof is finished.
\end{proof}

It remains to complete the proof of Theorem~\ref{weak-max-intro}. For this we need a density property, since the supersolution property assumed in Theorem~\ref{weak-max-intro} refers to testing with functions in $C^\infty_c(\Omega)$. In the following, for a function $u \in L^1_{loc}(\R^N)$, we let $u_\eps: \R^N \to \R$ denote the usual mollification of $u$ given by 
$$
u_{\epsilon}:=\rho_{\epsilon}\ast u=\int_{\R^N}\rho_{\epsilon}(z)u(\cdot-z)\ dz,
$$
where $\rho_0 \in C^\infty_c(\R^N)$ is a nonnegative radial function supported in $B_1(0)$ with $\int_{\R^N}\rho_0(x)\,dx = 1$ and  
$\rho_{\epsilon}(x)=\epsilon^{-N}\rho(\epsilon^{-1}x)$ for $x \in \R^N$, $\eps>0$.

\begin{prop}\label{density}
	Let $j: \R^N \to [0,\infty]$ be a function satisfying the assumption (j\,\!1) from the introduction, and put $J(x,y)=j(x-y)$ for $x,y\in \R^N$. Moreover, 
let $\Omega \subset \R^N$ be an open set, and let $u\in \cD^J(\Omega)$ be a function, which vanishes outside a compact subset of $\Omega$. Then we have $u_\eps \in \cD^J(\Omega)$ for $\eps>0$ sufficiently small and 
$$
\lim_{\eps \to 0} \cE_J(u-u_\eps,u-u_\eps)=0.
$$
\end{prop}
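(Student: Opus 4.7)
The first assertion should be almost immediate from what is already in place. Since $u$ vanishes outside a compact set $K\subset\Omega$, for every $\eps<\dist(K,\R^N\setminus\Omega)$ the mollification $u_\eps$ is smooth with compact support contained in the bounded open set $\Omega':=B_\eps(K)\subset\subset\Omega$. In particular $u_\eps$ is Lipschitz on $\R^N$ and vanishes on $\R^N\setminus\Omega'$, so Corollary~\ref{Lipschitz-inclusion} together with the inclusion in \eqref{eq:inclusions} gives $u_\eps\in\cD^J(\Omega')\subset\cD^J(\Omega)$.

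For the convergence, my plan is to reduce the claim to the $L^2$-continuity of translations by exploiting the translation invariance of $J(x,y)=j(x-y)$. Writing $u^z(\,\cdot\,):=u(\,\cdot\,-z)$, we have $u_\eps=\int_{\R^N}\rho_\eps(z)\,u^z\,dz$, and hence
$$
(u-u_\eps)(x)-(u-u_\eps)(y)=\int_{\R^N}\rho_\eps(z)\bigl[(u-u^z)(x)-(u-u^z)(y)\bigr]\,dz.
$$
I would square both sides, apply the Cauchy--Schwarz (equivalently, Jensen) inequality with respect to the probability measure $\rho_\eps(z)\,dz$, multiply by $\tfrac{1}{2}j(x-y)$, integrate in $(x,y)\in\R^{2N}$, and invoke Fubini to obtain the Minkowski-type bound
$$
\cE_J(u-u_\eps,\,u-u_\eps)\;\le\;\int_{\R^N}\rho_\eps(z)\,\cE_J(u-u^z,\,u-u^z)\,dz.
$$
Bilinearity of $\cE_J$ together with $\cE_J(u^z,u^z)=\cE_J(u,u)$ (again by translation invariance of $j$) yields the uniform bound $\cE_J(u-u^z,u-u^z)\le 4\cE_J(u,u)<\infty$, so dominated convergence will reduce everything to showing that $\cE_J(u-u^z,u-u^z)\to 0$ as $z\to 0$.

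To establish this last step I would introduce the auxiliary function $F(x,y):=(u(x)-u(y))\sqrt{j(x-y)}$, which lies in $L^2(\R^N\times\R^N)$ because $\|F\|_{L^2(\R^{2N})}^2=2\cE_J(u,u)<\infty$. Translation invariance of $j$ gives $F(x-z,y-z)=(u^z(x)-u^z(y))\sqrt{j(x-y)}$, and therefore
$$
\cE_J(u-u^z,u-u^z)\;=\;\tfrac{1}{2}\bigl\|F-F(\,\cdot\,-z,\,\cdot\,-z)\bigr\|_{L^2(\R^{2N})}^2,
$$
which tends to $0$ as $z\to 0$ by the strong continuity of translations in $L^2(\R^{2N})$ along the diagonal direction $(z,z)$. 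The only genuine delicacy in this plan lies in writing out the Minkowski/Jensen step carefully; once that is in place, the limit is a consequence of a standard $L^2$ fact and requires no quantitative information about $j$ beyond its translation invariance and the Lévy-type integrability in (j\,\!1).
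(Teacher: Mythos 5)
Your argument is correct, but the convergence step follows a genuinely different route from the paper. The paper also starts from a Jensen-type estimate (there used to show $\cE_J(u_\eps,u_\eps)\le\cE_J(u,u)$), but then introduces $U_\eps(x,y)=[u_\eps(x)-u_\eps(y)]\sqrt{j(x-y)}$, obtains $\|U_\eps\|_{L^2(\R^{2N})}\to\|U\|_{L^2(\R^{2N})}$ from a.e.\ convergence and Fatou's lemma, proves the weak convergence $U_\eps\rightharpoonup U$ by a truncation argument, and concludes strong $L^2$-convergence from uniform convexity. You instead bound $\cE_J(u-u_\eps,u-u_\eps)$ directly by $\int\rho_\eps(z)\,\cE_J(u-u^z,u-u^z)\,dz$ via Jensen/Minkowski and identify $\cE_J(u-u^z,u-u^z)=\tfrac12\|F-F(\cdot-z,\cdot-z)\|_{L^2(\R^{2N})}^2$ with $F(x,y)=(u(x)-u(y))\sqrt{j(x-y)}$, reducing everything to the strong continuity of translations in $L^2(\R^{2N})$ along the diagonal direction; like the paper's argument, this exploits the translation invariance $J(x,y)=j(x-y)$, but it bypasses the weak-convergence and uniform-convexity machinery entirely and is shorter and more elementary. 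Two small polish points: the invocation of dominated convergence at the end is not quite the right phrasing, since the measure $\rho_\eps(z)\,dz$ itself depends on $\eps$; the clean finish is simply $\int\rho_\eps(z)\,\cE_J(u-u^z,u-u^z)\,dz\le\sup_{|z|\le\eps}\cE_J(u-u^z,u-u^z)\to0$, using that $\rho_\eps$ is a probability density supported in $B_\eps(0)$ and that $z\mapsto\cE_J(u-u^z,u-u^z)$ vanishes continuously at $z=0$ (so the uniform bound $4\,\cE_J(u,u)$ is not needed). Also, for the first assertion your route through Corollary~\ref{Lipschitz-inclusion} and \eqref{eq:inclusions} is fine, whereas the paper obtains $u_\eps\in\cD^J(\R^N)$ from the bound $\cE_J(u_\eps,u_\eps)\le\cE_J(u,u)$; either way works.
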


\begin{proof}
Let $K \subset \Omega$ be a compact subset such that $u \equiv 0$ on $\R^N \setminus K$. Then $u_{\epsilon} \in C^{\infty}_c(\R^N)$ is supported in $K_\eps:= \{x \in \R^N\::\: \dist(x,K) \le \eps\}
$ for $\eps>0$. In particular, we have 
\begin{equation}
\label{eq:Omega-Omega'}
\text{$u_\eps \equiv 0$ on $\R^N \setminus \Omega$ for $\eps>0$ sufficiently small.}  
\end{equation}
Since $J(x,y)=j(x-y)$ and $\rho_{\epsilon}(z)=\rho_{\epsilon}(-z)$ for $z \in \R^N$, we have, by H\"older's inequality and Jensen's inequality,
	\begin{align}
	2&\cE_J(u_{\epsilon},u_{\epsilon})\notag\\
	&=\int_{\R^N}\int_{\R^N} \int_{\R^N}\int_{\R^N}\rho_{\epsilon}(z)\rho_{\epsilon}(z') [u(x-z)-u(y-z)] [u(x-z')-u(y-z')]  j(x-y)\ dz'dzdxdy\notag \\
	&=\int_{\R^N}\int_{\R^N} \int_{\R^N}\int_{\R^N}\rho_{\epsilon}(z)\rho_{\epsilon}(z') [u(x+z'-z)-u(y+z'-z)][u(x)-u(y)]   j(x-y)\ dz'dzdxdy \notag\\
	&=\int_{\R^N}\int_{\R^N} \int_{\R^N}\int_{\R^N}\rho_{\epsilon}(z-z')\rho_{\epsilon}(z') [u(x+z)-u(y+z)][u(x)-u(y)]   j(x-y)\ dz'dzdxdy \notag\\
	&=\int_{\R^N}\int_{\R^N} [u(x)-u(y)]j(x-y)  \int_{\R^N} [\rho_{\epsilon}\ast \rho_{\epsilon}](z)   [u(x+z)-u(y+z)]\ dzdxdy  \notag\\
	&\leq \sqrt{2}\cE_J(u,u)^{\frac{1}{2}}\Bigg(\ \int_{\R^N}\int_{\R^N}\Bigg(\ \int_{\R^N} [\rho_{\epsilon}\ast \rho_{\epsilon}](z)   [u(x+z)-u(y+z)]\ dz\Bigg)^{2}j(x-y)dxdy\Bigg)^{\frac{1}{2}} \notag\\
	&\leq \sqrt{2}\cE_J(u,u)^{\frac{1}{2}}\Bigg(\ \int_{\R^N} [\rho_{\epsilon}\ast \rho_{\epsilon}](z)\int_{\R^N} \int_{\R^N}[u(x+z)-u(y+z)]^2 j(x-y)dxdydz\Bigg)^{\frac{1}{2}}\\
&=\sqrt{2} \cE_J(u,u)^{\frac{1}{2}}\Bigg(\ \int_{\R^N} [\rho_{\epsilon}\ast \rho_{\epsilon}](z)
 \int_{\R^N} \int_{\R^N}[u(x)-u(y)]^2 j(x-y)dxdydz \Bigg)^{\frac{1}{2}}\\
&= 2\cE_J(u,u) \quad\text{ for all $\epsilon>0$.}
\label{conv3a}
\end{align}
Here we also used that
	\begin{equation}\label{rho-eps2}
	\int_{\R^N}[\rho_{\epsilon}\ast \rho_{\epsilon}](z)\ dz=\left(\ \int_{\R^N}\rho_{\epsilon}(z)\ dz \right)^2=1 \quad\text{ for all $\epsilon>0$.}
	\end{equation}
We thus conclude that $u_\eps \in \cD^J(\R^N)$ for every $\eps>0$, and that $u_\eps \in \cD^J(\Omega)$ for $\eps>0$ sufficiently small by (\ref{eq:Omega-Omega'}). Next, we define $U,U_\eps \in L^2(\R^N\times \R^N)$ by 
$$
U(x,y)=[u(x)-u(y)]\sqrt{j(x-y)},\qquad U_\eps(x,y)=[u_\eps(x)-u_\eps(y)]\sqrt{j(x-y)}.
$$
Since 
$$
\cE_J(u-u_\eps,u-u_\eps)= \int_{\R^N} \int_{\R^N} [(u-u_\eps)(x)-(u-u_\eps)(y)]^2 j(x-y)\,dx dy = \|U-U_\eps\|_{L^2(\R^N \times \R^N)}^2 
$$
for $\eps >0$, it remains to prove that 
\begin{equation}
  \label{eq:full-convergence}
U_\eps \to U \quad \text{in $L^2(\R^N \times \R^N)$ as $\eps \to 0$}.
\end{equation}
For this we note that 
\begin{equation}
\label{aepointwise}
u_\eps \to u \quad \text{in $L^2(\R^N)$ and a.e. pointwise in $\R^N$ as $\eps \to 0$,}
\end{equation}
see e.g. \cite[Section 4.2.1]{EG92}. As a consequence, $U_\eps \to U$ a.e. pointwise in $\R^N \times \R^N$. By Fatou's Lemma and \eqref{conv3a}, we then find that 
\begin{align*}
\|U\|_{L^2(\R^N \times \R^N)}^2 \leq \liminf_{ \eps \to 0} \|U_\eps\|_{L^2(\R^N \times \R^N)}^2
\le \limsup_{\eps \to 0 } \|U_\eps\|_{L^2(\R^N \times \R^N)}^2&= \limsup_{\eps \to 0} \cE_J(u_{\epsilon},u_{\epsilon})\\
& \leq\cE_J(u,u) = \|U\|_{L^2(\R^N \times \R^N)}^2.
\end{align*}
This shows the convergence 
\begin{equation}
\label{eq:normconvergence}
\|U_\eps\|_{L^2(\R^N \times \R^N)} \to \|U\|_{L^2(\R^N \times \R^N)} \quad \text{as $\eps \to 0$.}
\end{equation}
Next we claim that 
\begin{equation}
\label{eq:weakconvergence}
U_\eps \rightharpoonup U \qquad \text{in $L^2(\R^N \times \R^N)$.}
\end{equation}
To see this, we let $V \in L^2(\R^N \times \R^N)$ be arbitrary, and we let  
$$
V_n(x,y)= \left \{ 
  \begin{aligned}
  &V(x,y) &&\qquad \text{if $|x|,|y| \le n$ and $|x-y| \ge \frac{1}{n}$;}\\
  &0 &&\qquad \text{elsewhere.}    
  \end{aligned}
\right.
$$
It then follows from Lebesgue's theorem that 
\begin{equation}
  \label{eq:V-convergence}
\text{$V_n \to V$ in $L^2(\R^N \times \R^N)$ as $n \to \infty$.}  
\end{equation}
Moreover, since $\sqrt{j(x-y)} \le 1 + j(x-y)$, we deduce from (j1) and the definition of $V_n$ that the functions 
$$
h_1^n,h_2^n: \R^N \to \R,\qquad h_1^n(x)= \int_{\R^N}\sqrt{j(x-y)} V_n(x,y)\,dy,\qquad h_2^n(y)= \int_{\R^N}\sqrt{j(x-y)} V_n(x,y)\,dx 
$$
are bounded with bounded support, so in particular $h_1^n,h_2^n \in L^2(\R^N)$ for $n \in \N$. By (\ref{aepointwise}), we thus conclude that 
\begin{align*}
\lim_{\eps \to 0}&\int_{\R^N} \int_{\R^N} U_\eps(x,y)V_n(x,y) \,dxdy =\lim_{\eps \to 0} \int_{\R^N} \int_{\R^N} [u_\eps(x)-u_\eps(y)]\sqrt{j(x-y)} V_n(x,y) \,dxdy\\
&= \lim_{\eps \to 0} \int_{\R^N}u_\eps(x) h_1^n(x)\,dx - \int_{\R^N}u_\eps(y) h_2^n(y)\,dy = \int_{\R^N}u(x) h_1^n(x)\,dx - \int_{\R^N}u(y) h_2^n(y)\,dy\\ 
&= \int_{\R^N} \int_{\R^N} [u(x)-u(y)]\sqrt{j(x-y)} V_n(x,y) \,dxdy= \int_{\R^N} \int_{\R^N} U(x,y)V_n(x,y) \,dxdy
\end{align*}
for every $n \in \N$, which implies that 
\begin{align*}
&\limsup_{\eps \to 0 } \Bigl|\int_{\R^N} \int_{\R^N} (U_\eps-U)(x,y)V(x,y) \,dxdy\Bigr|= \limsup_{\eps \to 0 }\Bigl| \int_{\R^N} \int_{\R^N} (U_\eps-U)(x,y)(V-V_n)(x,y) \,dxdy\Bigr| \\  
&\le \limsup_{\eps \to 0 } \|U_\eps-U\|_{L^2(\R^N \times \R^N)} \|V-V_n\|_{L^2(\R^N \times \R^N)}
\le 2 \|U\|_{L^2(\R^N \times \R^N)} \|V-V_n\|_{L^2(\R^N \times \R^N)} 
\end{align*}
for every $n \in \N$. Consequently, 
$$
\lim_{\eps \to 0} \int_{\R^N} \int_{\R^N} (U_\eps-U)(x,y)V(x,y) \,dxdy = 0 \qquad \text{as $\eps \to 0$}
$$
by (\ref{eq:V-convergence}). Hence (\ref{eq:weakconvergence}) holds. Since $L^2(\R^N \times \R^N)$ is uniformly convex, (\ref{eq:full-convergence}) now follows from (\ref{eq:normconvergence}) and (\ref{eq:weakconvergence}).
\end{proof}

\begin{cor}
\label{sec:supersolution-variational-supersolution}
	Let $j: \R^N \to [0,\infty]$ be a function satisfying the assumption (j\,\!1) from the introduction, and put $J(x,y)=j(x-y)$ for $x,y\in \R^N$. 

Moreover, let $\Omega \subset \R^N$ be an open set, let $c \in L^\infty_{loc}(\Omega)$, and let $u \in \cV^j_{loc}(\Omega)$ be a weak supersolution of (\ref{nonlocal-general-0}) in the sense of (\ref{eq:assumption-supersol-variant-1}). Then, for any open set 
$\Omega' \subset \subset \Omega$, the function $u \in \cV^j(\Omega')$ is a variational supersolution of the equation 
\begin{equation}
  \label{eq:equation-omega-prime}
I u = c(x)u \qquad \text{in $\Omega'$.}  
\end{equation}
\end{cor}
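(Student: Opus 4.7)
The plan is to approximate nonnegative test functions in $\cD^J(\Omega')$ with bounded support by smooth compactly supported functions in $\Omega$ via mollification, apply the given weak supersolution hypothesis (\ref{eq:assumption-supersol-variant-1}) to those smooth approximants, and pass to the limit using the continuity estimate from Lemma~\ref{testing} and the convergence result from Proposition~\ref{density}.

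First I verify that $u\in\cV^j(\Omega')$; this is immediate from the definition (\ref{def-V-J-loc}) of $\cV^j_{loc}(\Omega)$ since $\Omega'\subset\subset\Omega$. I then fix an auxiliary open set $\Omega''$ with $\Omega'\subset\subset\Omega''\subset\subset\Omega$, so that $u\in\cV^j(\Omega'')$ as well. Given any nonnegative $v\in\cD^J(\Omega')$ with bounded support, the identity $v\equiv 0$ on $\R^N\setminus\Omega'$ yields $\supp v\subset\overline{\Omega'}$, a compact subset of $\Omega''$, and by the inclusions (\ref{eq:inclusions}) we have $v\in\cD^J(\Omega'')$. Let $v_\eps:=\rho_\eps\ast v$ be the standard mollification. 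For every $\eps<\dist(\overline{\Omega'},\R^N\setminus\Omega'')$, the function $v_\eps$ is smooth, nonnegative and supported in $\Omega''$, so in particular $v_\eps\in C^\infty_c(\Omega)$ with $v_\eps\ge 0$.

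Applying the hypothesis (\ref{eq:assumption-supersol-variant-1}) to the test function $v_\eps$ gives
$$\cE_j(u,v_\eps)\ge\int_\Omega c(x)\,u(x)\,v_\eps(x)\,dx$$
for all such $\eps$. I then pass to the limit $\eps\to 0$ on both sides. For the left-hand side, Proposition~\ref{density} (applied in $\Omega''$) yields $v_\eps\in\cD^J(\Omega'')$ for small $\eps$ together with $\cE_j(v-v_\eps,v-v_\eps)\to 0$. Since $u\in\cV^j(\Omega'')$ and $v-v_\eps\in\cD^J(\Omega'')$, Lemma~\ref{testing} gives
$$|\cE_j(u,v_\eps)-\cE_j(u,v)|\le (2+\sqrt{2})\,\rho(u,\Omega'')^{1/2}\,\cE_j(v-v_\eps,v-v_\eps)^{1/2}\longrightarrow 0.$$
For the right-hand side, both integrands are supported in $\Omega''$, on which $c\in L^\infty$ and $u\in L^2$, so $cu\in L^2(\Omega'')$; the standard convergence $v_\eps\to v$ in $L^2(\R^N)$ together with Cauchy--Schwarz yields $\int c\,u\,v_\eps\,dx\to\int c\,u\,v\,dx$. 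Combining, we conclude that $\cE_j(u,v)\ge\int_{\Omega'}c(x)\,u(x)\,v(x)\,dx$, which is the variational supersolution property on $\Omega'$.

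The main obstacle is a purely bookkeeping one: ensuring simultaneously that the mollifications $v_\eps$ sit inside $C^\infty_c(\Omega)$ (so that the hypothesis (\ref{eq:assumption-supersol-variant-1}) can be invoked) and that both $v$ and $v_\eps$ lie in a common space $\cD^J$ on which the continuity of $\cE_j(u,\,\cdot\,)$ against $u\in\cV^j$ is available. The intermediate domain $\Omega''$ reconciles both requirements. No ingredient beyond Lemma~\ref{testing} and Proposition~\ref{density} is needed.
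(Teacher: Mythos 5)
Your proposal is correct and follows essentially the same route as the paper's proof: fix an intermediate set $\Omega''$ with $\Omega'\subset\subset\Omega''\subset\subset\Omega$, mollify the test function $v\in\cD^J(\Omega')$, apply the $C^\infty_c(\Omega)$-hypothesis to $v_\eps$, and pass to the limit using Proposition~\ref{density} together with the continuity bound of Lemma~\ref{testing} on the left and $L^2$-convergence of $v_\eps$ against $cu\in L^2(\Omega'')$ on the right. The only (harmless) deviation is the constant in the continuity estimate, where the paper uses $\tfrac{2+\sqrt{2}}{2}$ because $\cE_j$ carries the factor $\tfrac12$; your larger constant still gives the needed convergence.
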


\begin{proof}
Choose an open set $\Omega''$ with  
$$
\Omega' \subset \subset \Omega'' \subset \subset \Omega
$$ 
Since $u \in \cV^j_{loc}(\Omega)$, we have $u \in \cV^J(\Omega'') \subset \cV^J(\Omega')$. Let $v \in \cD^J(\Omega')$, $v \ge 0$. By Proposition~\ref{density},
\begin{equation}
  \label{eq:density-statement}
\lim_{\eps \to 0} \cE_J(v-v_\eps,v-v_\eps) \to 0 \qquad \text{as $\eps \to 0$.} 
\end{equation}
Moreover, $v_\eps \in \cD^J(\Omega'')$ for $\eps>0$ sufficiently small, whereas also $v_\eps \to v$ in $L^2(\Omega'')$ as $\eps \to 0$.
It then follows from Lemma~\ref{testing} and (\ref{eq:density-statement}) that 
$$
|\cE_J(u,v-v_\eps)| \le \frac{2+ \sqrt{2}}{2}\, \rho(u,\Omega'')^{\frac{1}{2}}\, \cE_J(v-v_\eps,v-v_\eps)^{\frac{1}{2}} \to 0 \qquad \text{as $\eps \to 0$.}
$$
Consequently, by (\ref{eq:assumption-supersol-variant-1}),  
$$
\cE_J(u,v)= \lim_{\eps \to 0}\cE_J(u,v_\eps) \ge \lim_{\eps \to 0} \int_{\Omega}c(x) u  v_\eps\,dx =
\lim_{\eps \to 0} \int_{\Omega''}c(x) u v_\eps\,dx =\int_{\Omega''}c(x) u v\,dx = \int_{\Omega}c(x) u v\,dx.  
$$
Hence $u$ is a variational supersolution of $Iu = c(x)u$ in $\Omega'$, as claimed.
\end{proof}

\begin{proof}[Proof of Theorem \ref{weak-max-intro}(completed)]
Let $r>0$ be given by Theorem~\ref{weak-max-small-volume}. Moreover, let $\Omega \subset \R^N$ be an open set with $|\Omega|<r$, and let $u \in \cV^j_{loc}(\Omega)$ be a weak supersolution of (\ref{nonlocal-general-0}) in $\Omega$ satisfying (\ref{eq:weak-max-intro-cond}). Then we may choose an open set $\Omega' \subset \subset \Omega$ such that $u \ge 0$ on $\R^N \setminus \Omega'$. We note that $|\Omega'| \le |\Omega| < r$. Moreover, $u$ is a variational supersolution of $Iu =c(x)u$ in $\Omega'$ by Corollary~\ref{sec:supersolution-variational-supersolution}. Consequently, Proposition~\ref{wmp1} -- applied with $\Omega'$ in place of $\Omega$ -- yields that $u \ge 0$ in $\R^N$. The proof is finished. 
\end{proof}

\section{Proof of strong maximum principles}
\label{section-strong-mp}
This section is devoted to the proof of strong maximum principles. More precisely, we will complete the proofs of Theorem~\ref{main-theorem}, Theorem~\ref{hopf-simple2-variant1}, and Theorem~\ref{hopf-simple21}. As a preparation, we first need to investigate the validity of the following key property of a pair of subsets $M,K \subset \R^N$. Throughout this section, we assume that the kernel function $J$ satisfies assumptions (J1)--(J3).

\begin{defi}
\label{SSP-property}
Let $K \subset \R^N$ be measurable with $\diam\ K < \infty$, and let $M \subset \R^N$ be open with $\dist(M,K)>0$. We say that the pair $(K,M)$ satisfies the {\em subsolution property -- (SSP) in short --} if for every $f \in \cV^J_\infty(M)$ and $\kappa>0$ there exists $a>0$ such that the function \mbox{$w:=f+a1_K\in \cV^J_\infty(M)$} is a variational subsolution of the equation 
$$
I w = -\kappa \qquad \text{in $M$.}
$$
\end{defi}
We recall here that indeed $w \in \cV^J_\infty(M) \subset\cV^J(M)$ since $f \in \cV^J_{\infty}(M)$ and $1_K \in \cV^J_\infty(M)$ as a consequence of the fact that $\dist(M,K)>0$ and $\diam\ K < \infty$.

\begin{lemma}
\label{SSP-basic}
Let $K \subset \R^N$ be measurable with $\diam\ K < \infty$ and let $M \subset \R^N$ be open with $\dist(M,K)>0$. 
If 
\begin{equation}
\label{pos-j}
\inf_{x\in M} \int_K J(x,y)\ dy>0,  
\end{equation}
then the pair $(M,K)$ satisfies (SSP).  
\end{lemma}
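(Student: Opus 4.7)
The plan is to exploit the bilinearity of $\cE_J$: for $w = f + a 1_K$ with $f \in \cV^J_\infty(M)$ and any nonnegative test function $v \in \cD^J(M)$ of bounded support,
$$
\cE_J(w,v) = \cE_J(f,v) + a\,\cE_J(1_K,v).
$$
The term $\cE_J(f,v)$ is controlled by $\|f\|_{\cV^J_\infty(M)} \int_M v\,dx$ (using that $v \in L^1(M)$, since $v$ is $L^2$ with bounded support). The whole argument will reduce to extracting from $\cE_J(1_K,v)$ a strictly negative multiple of $\int_M v\,dx$ and then choosing $a$ large enough to absorb the contribution of $f$ and achieve the bound $-\kappa \int_M v\,dx$.

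First I would derive an explicit formula for $\cE_J(1_K,v)$. Since $\dist(M,K)>0$, the sets $K$ and $M$ are disjoint, so $1_K$ vanishes on $\supp v \subset \overline M$ and $v$ vanishes on $K$. Splitting the defining double integral of $\cE_J(1_K,v)$ into the four cases according to where $x,y$ fall relative to $K$, the diagonal cases vanish, while the off-diagonal cases $(x\in K,\, y\notin K)$ and $(x\notin K,\, y\in K)$ coincide by symmetry of $J$ and Fubini; together they yield
$$
\cE_J(1_K,v) = -\int_M h(y)\,v(y)\,dy, \qquad h(y) := \int_K J(x,y)\,dx.
$$

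Next I would verify that $1_K \in \cV^J_\infty(M)$ by showing that $h$ is uniformly bounded on $M$. Setting $d := \dist(M,K) > 0$, for $y \in M$ we have $K \subset \R^N \setminus B_d(y)$, and (J1) directly gives
$$
h(y) \le \int_{|x-y|\ge d} J(x,y)\,dx \le \frac{1}{1 \wedge d^2}\int_{\R^N} (1 \wedge |x-y|^2)\, J(x,y)\,dx \le \frac{C_J}{1 \wedge d^2}.
$$
Combined with the explicit formula above, this yields $|\cE_J(1_K,\phi)| \le (C_J/(1\wedge d^2))\|\phi\|_{L^1(M)}$ for $\phi \in \cD^J(M) \cap L^1(M)$, so $1_K \in \cV^J_\infty(M)$ and hence $w \in \cV^J_\infty(M) \subset \cV^J(M)$.

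Finally, the hypothesis \eqref{pos-j} provides $c_0>0$ with $h(y) \ge c_0$ on $M$, so $\cE_J(1_K,v) \le -c_0 \int_M v\,dx$ for nonnegative admissible $v$. Choosing $a := (\|f\|_{\cV^J_\infty(M)} + \kappa)/c_0$, we obtain
$$
\cE_J(w,v) \le \bigl(\|f\|_{\cV^J_\infty(M)} - a c_0\bigr)\int_M v\,dx = -\kappa \int_M v\,dx,
$$
which is precisely the variational subsolution inequality for $Iw = -\kappa$ in $M$. The argument is essentially linear and offers no substantive obstacle; the only care point is the bookkeeping in Step 1, where disjointness of $K$ and $M$ together with symmetry of $J$ is used to collapse the four cases of the double integral to the clean expression involving $h$, and the application of (J1) in Step 2 to ensure the $\cV^J_\infty$ membership of $1_K$ needed to keep $w$ in the right function space.
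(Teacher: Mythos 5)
Your proof is correct and follows essentially the same route as the paper: split $\cE_J(w,v)=\cE_J(f,v)+a\,\cE_J(1_K,v)$, use the disjointness of $K$ and $\supp v$ to reduce $\cE_J(1_K,v)$ to $-\int_M v(x)\int_K J(x,y)\,dy\,dx$, and choose $a$ large via the hypothesis \eqref{pos-j}; you merely spell out the computation of $\cE_J(1_K,v)$ and the bound $\int_K J(x,y)\,dy\le C_J/(1\wedge d^2)$ that the paper leaves implicit. One tiny bookkeeping point: membership $1_K\in\cV^J_\infty(M)$ also requires $\rho(1_K,M)<\infty$, which for possibly unbounded $M$ follows by the same estimate after swapping the order of integration and using $|K|<\infty$ (from $\diam K<\infty$) — exactly the role the paper assigns to that hypothesis in its remark after the definition of (SSP).
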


\begin{proof}
Let $f \in \cV^J_\infty(M)$ and $\kappa>0$. By \eqref{pos-j} we may choose $a>0$ sufficiently large such that 
		$$
		\|f\|_{\cV^{J}_{\infty}(U)}  - a \inf_{x \in M} \int_K J(x,y)\ dy \le  -\kappa.
		$$
We then put $w=f+a1_K \in \cV^J_\infty(M)$, and we note that for a nonnegative function $\varphi\in \cD^{J}(M)$ with bounded support we have 
		\begin{align*}
		\cE_J(w,\varphi)&=\cE_J(f,\varphi) +a\cE_J(1_K, \phi)\leq \|f\|_{\cV^{J}_{\infty}(M)} \int_{M}\varphi(x)\ dx -a\int_{M}\varphi(x)\int_{K}J(x,y)\ dydx \\
		&\leq \Bigl(\|f\|_{\cV^{J}_{\infty}(M)}-a\inf_{x\in M}\int_{K}J(x,y)\ dy\Bigr) \int_{M}\varphi(x)\ dx \le -\kappa \int_{M} \phi(x)\,dx.
\end{align*}
By definition, this implies that $w$ is a variational subsolution of the equation $I w = -\kappa$ in $M$, as claimed.
	\end{proof}

	\begin{lemma}\label{positivity-inherit}        
Let $\Omega\subset\R^{N}$ be an open set, let $c \in L^\infty(\Omega)$ and suppose that
 \begin{equation}
   \label{eq:assumption-supersol-variant-1-0}
\left\{
  \begin{aligned}
&\text{$u \in \cV^J(\Omega)$ is a variational supersolution of the}\\
&\text{equation $Iu = c(x)u\:$ in $\Omega$ with $u\geq 0$ in $\R^{N}$.}
  \end{aligned}
\right.
\end{equation}
Furthermore, let $K \subset \R^N$ be measurable with 
$$
|K|>0, \quad \diam\ K < \infty \quad \text{and}\quad  \underset{K}\essinf\: u >0,
$$
and let $M \subset \Omega$ be open with $\dist(M,K)>0$ and such that the pair $(K,M)$ satisfies (SSP).\\[0.1cm]
Then $u$ is strictly positive in $M$.
	\end{lemma}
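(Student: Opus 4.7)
My plan is to compare $u$ from below with a small positive multiple of a subsolution supplied by (SSP). Since ``strictly positive in $M$'' refers to each compact subset, I fix an arbitrary compact $M' \subset M$ and aim to produce $\delta' > 0$ with $u \ge \delta'$ a.e.\ on $M'$. I first truncate: set $M_R := M \cap B_R(0)$ with $R$ large enough that $M' \subset B_R(0)$, so that any test function supported in $\overline{M_R}$ is bounded. Then I choose a cutoff $f \in C^2_c(M_R)$ with $0 \le f \le 1$ and $f \ge \delta$ on $M'$ for some $\delta > 0$; by Lemma~\ref{representation} combined with~\eqref{eq:inclusions-1} we have $f \in \cV^J_\infty(M)$. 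Applying (SSP) to this $f$ with a parameter $\kappa > 0$ still to be specified, I obtain $a > 0$ such that $w := f + a\,1_K$ is a variational subsolution of $Iw = -\kappa$ in $M$, and hence also in $M_R$ (since $\cD^J(M_R) \subset \cD^J(M)$). Finally I set $m := \essinf_K u > 0$, $\sigma := m/a$, and $v := u - \sigma w$; by construction $v \ge 0$ on $\R^N \setminus M_R$, because on $K$ we have $\sigma w = \sigma a = m \le u$ and $w$ vanishes off $K \cup M_R$.

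By Lemma~\ref{pospart}(iv) applied to $v \in \cV^J(M_R)$ we get $v^- \in \cD^J(M_R) \subset \cD^J(\Omega)$ with bounded support and $\cE_J(v^-,v^-) \le -\cE_J(v,v^-)$. Testing the supersolution property of $u$ and, scaled by $\sigma$, the subsolution property of $w$ against $v^-$ yields
\[
\cE_J(v^-,v^-) \le -\int_M c(x)\,u\,v^-\,dx - \sigma\kappa\int_M v^-\,dx.
\]
On $\supp v^- \subset M_R$ the identities $w = f$ and $u = \sigma f - v^-$ hold, so $c\,u\,v^- = \sigma c f v^- - c(v^-)^2$. The decisive observation is the pointwise bound $v^- \le \sigma w = \sigma f \le \sigma\|f\|_{L^\infty}$ (inherited from $u \ge 0$); this renders the $c(v^-)^2$ term linear in $\int v^-$ and lets me absorb both $c$-contributions at once into
\[
\cE_J(v^-,v^-) \le \sigma\bigl(2\|c\|_{L^\infty}\|f\|_{L^\infty} - \kappa\bigr)\int_M v^-\,dx.
\]

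Fixing $\kappa > 2\|c\|_{L^\infty}\|f\|_{L^\infty}$ makes the right-hand side nonpositive, while $\cE_J(v^-,v^-) \ge 0$ in any case; this forces $\int_M v^- = 0$, so $v^- \equiv 0$ a.e. Consequently $u \ge \sigma w = \sigma f \ge \sigma\delta > 0$ a.e.\ on $M'$, giving $\essinf_{M'} u \ge \sigma\delta > 0$, as required. The main obstacle this route finesses is the absence of any hypothesis relating $\|c^+\|_{L^\infty}$ to $\Lambda_1(M)$ or to $\int_{\R^N} j$, which would be needed to invoke a weak maximum principle on $M$ directly; the pointwise smallness $v^- \le \sigma\|f\|_{L^\infty}$ is the key device that collapses the estimate to a single linear-in-$\int v^-$ comparison, so that the $\sigma\kappa$ penalty coming from (SSP) — whose size is entirely at my disposal via $\kappa$ — dominates every contribution from $c$.
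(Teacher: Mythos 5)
Your argument is correct, and it proves the lemma by a genuinely different route than the paper. The paper reduces to $c \le 0$, fixes $x_0 \in M$, and invokes the small-volume weak maximum principle (Theorem~\ref{weak-max-small-volume}, which rests on Proposition~\ref{wmp1}, the eigenvalue asymptotics \eqref{blowup} and, through $\int_{\R^N} j(z)\,dz>0$, on (J2)) on a small ball $B_{2r}(x_0)\subset M$: there the (SSP)-subsolution $w=f+a1_K$ built with $\kappa=\|c\|_{L^\infty(M)}$ and $0\le w\le 1$ is a subsolution of $Iw=c(x)w$, and the comparison $v=u-\frac{\delta}{a}w\ge 0$ follows from that weak maximum principle, giving $u\ge \delta/a$ on $B_r(x_0)$. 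You instead work on a bounded truncation $M_R$ of $M$ all at once and replace the weak maximum principle by a direct energy estimate: you test the supersolution inequality for $u$ and the (SSP)-subsolution inequality for $w$ with $\phi=v^-\in\cD^J(M_R)$ (justified by Lemma~\ref{pospart}(iv) and Lemma~\ref{testing}), and exploit the pointwise bound $v^-=(\sigma w-u)^+\le \sigma\|f\|_{L^\infty}$ --- valid precisely because $u\ge 0$ on $\R^N$ --- to reduce both $c$-contributions to terms linear in $\int_M v^-\,dx$, which the adjustable forcing $\kappa>2\|c\|_{L^\infty}\|f\|_{L^\infty}$ then dominates; the order of choices is legitimate since this threshold for $\kappa$ does not depend on the constant $a$ returned by (SSP), and the factor $\sigma=m/a$ scales out of the final inequality. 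What your route buys: it avoids the small-volume machinery entirely (no use of $\Lambda_1$, and no use of (J2) at this step), and it yields uniform positivity on an arbitrary compact $M'\subset M$ in one stroke rather than ball by ball; what the paper's route buys is brevity, by reusing Theorem~\ref{weak-max-small-volume}, which it needs elsewhere anyway. The supporting details you rely on (well-definedness and bilinearity of $\cE_J(u,v^-)$ and $\cE_J(w,v^-)$; $v\ge 0$ outside $M_R$ because $f$ vanishes on $K$ and $\sigma a=\essinf_K u$; $f,\,w\in\cV^J_\infty(M)$ via (J3)) are all covered by Lemmas~\ref{testing}, \ref{pospart}, \ref{representation} and the remark following Definition~\ref{SSP-property}, so I see no gap.
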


        \begin{proof}
Since 
$$
\cE_J(u,\phi) \ge \int_{\Omega} c(x) u \phi\,dx \ge - \int_{\Omega} c^-(x) u\phi\,dx 
$$
for every $\phi \in \cD^J(\Omega)$, $\phi \ge 0$ with bounded support by (\ref{eq:assumption-supersol-variant-1-0}), we may assume that $c \le 0$ in the following. It suffices to show that for every $x_0 \in M$ there exists $r>0$ such that $\underset{B_{r}(x_0)}\essinf\: u >0$. So let $x_0 \in M$. Since $\|c^+\|_{L^\infty(\Omega)}=0 < \int_{\R^N}j(z)\,dz$ by (J2), we may -- extending $c$ trivially to all of $\R^N$ -- apply Theorem~\ref{weak-max-small-volume} to find $r>0$ sufficiently small such that $B_{2r}(x_0) \subset M$ and such that 
 \begin{equation}
   \label{eq:assumption-supersol-variant-1-0-1}
\left\{
  \begin{aligned}
&\text{every variational supersolution $v \in \cV^J(B_{2r}(x_0))$ of the equation $I v = c(x) v$}\\
&\text{in $B_{2r}(x_0)$ with $v \ge 0$ in $\R^{N} \setminus B_{2r}(x_0)$ satisfies $v \ge 0$ in $B_{2r}(x_0)$.} 
  \end{aligned}
\right.
\end{equation}
We then pick a function $f\in C^{2}_{c}(\R^N)$ such that $0 \le f \le 1$ and 
		\begin{equation*}
		f(x):=\left\{\begin{aligned} 
                &1 &&\qquad \text{ for $|x-x_0|\leq r$,}\\
		&0 &&\qquad \text{ for $|x-x_0|\geq 2r$.}\\
		\end{aligned}\right.
		\end{equation*}
                We note that $f \in \cV^J_\infty(M)$ by (\ref{eq:inclusions-1}) and 
Lemma~\ref{representation}. Moreover, since the pair $(K,M)$ satisfies property (SSP), there exists $a>0$ such that the 
		function $w := f + a 1_K \in \cV^{J}_\infty(M)$ is a variational subsolution of the equation  
$$
I w = - \|c\|_{L^\infty(M)} \qquad \text{in $M$.}
$$
Since $0 \le w \le 1$ in $B_{2r}(x_0)$, this clearly implies that 
		\begin{equation}
		\label{eq:w-ineq}
		 \cE_J(w,\varphi)  \le -\|c\|_{L^\infty(M)} \int_{B_{2r}(x_0)}\varphi(x)\,dx \leq \int_{B_{2r}(x_0)}c(x)w(x)\varphi(x)\ dx
		\end{equation}
for $\varphi \in \cD^J(B_{2r}(x_0)),\varphi \ge 0$. Note that the function $w$ also satisfies  
		\begin{equation}
		\label{eq:ineq-w}
		w\equiv 0 \quad \text{on $\R^{N} \setminus (B_{2r}(x_0) \cup K)$,} \quad \qquad w \equiv  a 
		\quad \text{on $K$.}
		\end{equation}
		Let $\delta:= \underset{K}\essinf\: u$, so that $\delta>0$ by assumption. By (\ref{eq:assumption-supersol-variant-1-0}), (\ref{eq:w-ineq}), and (\ref{eq:ineq-w}) the function 
$$
v:=u-\frac{\delta}{a} w \:\in\: \cV^{J}(B_{2r}(x_0))
$$ 
is a variational supersolution of the equation $I v = c(x) v$ in $B_{2r}(x_0)$ with $v \ge 0$ on $\R^{N} \setminus B_{2r}(x_0)$. 
By (\ref{eq:assumption-supersol-variant-1-0-1}), we thus conclude that $v \ge 0$ in $\R^N$, so that 
$$
u \geq \frac{\delta}{a} w = \frac{\delta}{a}  >0 \qquad \text{in $B_{r}(x_0)$,}
$$
as desired.
        \end{proof}

We may now complete the 
	
	\begin{proof}[Proof of Theorem~\ref{hopf-simple21}]
 We assume that $u \not \equiv 0$  in $\R^N$. For given $x_0 \in \Omega$, it then suffices to show that $\underset{B_r(x_0)}\essinf\: u >0$ for $r>0$ sufficiently small. Since $u\not\equiv 0$ in $\R^N$ there exists a bounded measurable set 
		$K \subset \R^{N}$ with $|K|>0$, $\diam\ K < \infty$, $x_0 \not \in \overline K$ and such that $\underset{K}\essinf\: u >0$.
We may then fix $0<r< \frac{1}{2}\dist(x_0,K \cup \partial \Omega)$ and set $M:= B_{2r}(x_0) \subset \Omega$. Then 
$$
\inf_{x\in M} \int_K J(x,y)\ dy \ge \inf_{x\in M} \int_K j(x-y)\ dy \ge |K| \:\underset{B_R(0)}{\essinf}\: j >0 
$$
for $R>0$ sufficiently large as a consequence of assumption (J2)$_{S}$, and thus (\ref{pos-j}) is satisfied. Consequently, the pair $(K,M)$ satisfies (SSP) by Lemma~\ref{SSP-basic}, and thus Lemma~\ref{positivity-inherit} implies that $u$ is strictly positive in $M$. The proof is thus finished.
	\end{proof}
	
The next task of this section is to complete the proof of Theorem~\ref{hopf-simple2-variant1}, in which the global positivity assumption (J2)$_{S}$ for the kernel is not assumed. We recall that, due to the fact that we are not assuming the global positivity condition (J2)$_{S}$, we cannot expect that $u \not \equiv 0$ in $\R^N$ implies $u \not \equiv 0$ in $\Omega$. Hence the alternative in Theorem~\ref{hopf-simple2-variant1} differs from the alternative in Theorem~\ref{hopf-simple21}.  
We need to recall a measure theoretic notion. Let $K \subset \R^N$ be a measurable subset. A point $x \in \R^N$ is called a \textit{point of density one for $K$} if 
\begin{equation}
  \label{eq:density-one}
\lim_{r \to 0} \frac{|B_r(x) \cap K|}{|B_r|}=1.
\end{equation}
By a classical result, 
\begin{equation}
  \label{eq:density-one-assertion}
\text{a.e. $x \in K$ is a point of density one for $K$,}
\end{equation}
see e.g. \cite[Corollay 3 in Section 1.7]{EG92}. We need the following result on the existence of pairs $(K,M)$ satisfying the subsolution property (SSP).

\begin{lemma}\label{subsolutionstart}
Let $K \subset \R^N$ be measurable with $|K|>0$, $\diam\ K<\infty$ and such that
every $x \in K$ is a point of density one for $K$.\\
Then for every $\epsilon>0$ there exists a nonempty open set $M \subset \subset B_{\epsilon}(K)\setminus \overline K$ such that the pair $(K,M)$ satisfies property (SSP).
\end{lemma}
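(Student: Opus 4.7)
The plan is to apply Lemma~\ref{SSP-basic}: since $J(x,y) \geq j(x-y)$, it suffices to produce a nonempty open set $M$ with $\overline M \subset B_\eps(K) \setminus \overline K$ and
$$
\inf_{x \in M} \int_K j(x-y)\,dy > 0.
$$
I will take $M$ to be a small ball around a carefully chosen point $x^* \in B_\eps(K) \setminus \overline K$, with positivity of the integral at $x^*$ coming from simultaneous density-one concentration at an anchor $y_0 \in K$ and at a ``good'' increment $z^*$ in the positivity set of $j$.

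First I pick the increment. By (J2), $\{j > 0\} \cap B_{\eps/2}(0)$ has positive measure, so for some $\alpha > 0$ the set $E_\alpha := \{j \geq \alpha\} \cap B_{\eps/2}(0)$ has positive measure. The Lebesgue density theorem then supplies a density-one point $z^* \in E_\alpha$; necessarily $z^* \neq 0$. Next I find an anchor $y_0$: I claim there exists $y_0 \in K$ with $x^* := y_0 + z^* \notin \overline K$. Indeed, otherwise $K + z^* \subset \overline K$ as sets, and taking closures yields $\overline K + z^* \subset \overline K$; iterating gives $\overline K + n z^* \subset \overline K$ for every $n \in \N$, incompatible with boundedness of $K$ since $z^* \neq 0$. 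Note $x^* \in B_\eps(K) \setminus \overline K$ because $\dist(x^*,K) \leq |z^*| < \eps$ and $x^* \notin \overline K$.

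The main step is a uniform positive lower bound on $\int_K j(x-y)\,dy$ in a neighborhood of $x^*$. Substituting $u = y - y_0$ and restricting to $u \in B_r(0)$,
$$
\int_K j(x-y)\,dy \geq \alpha \cdot \bigl| (K - y_0) \cap B_r(0) \cap \bigl((x - y_0) - E_\alpha\bigr) \bigr|.
$$
The density-one hypothesis at $y_0 \in K$ handles the first factor: $|(K - y_0) \cap B_r(0)| \geq (1-\eta)|B_r|$ for $r$ small. A change of variable identifies $|B_r(0) \cap ((x - y_0) - E_\alpha)|$ with $|B_r(x-y_0) \cap E_\alpha|$, which is a convolution of the bounded compactly supported functions $1_{B_r(0)}$ and $1_{E_\alpha}$ evaluated at $x-y_0$ and hence continuous in $x$; at $x = x^*$ its value is $|B_r(z^*) \cap E_\alpha| \geq (1-\eta)|B_r|$ by density-one of $z^*$ in $E_\alpha$. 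Combining via a union bound produces $\rho > 0$ such that the triple intersection has measure at least $(1-3\eta)|B_r|$ for all $x \in B_\rho(x^*)$, yielding the sought uniform positive lower bound $\alpha(1-3\eta)|B_r|$.

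Finally I set $M := B_\rho(x^*)$, shrinking $\rho$ further if needed so that $\overline M \subset B_\eps(K) \setminus \overline K$ (possible because $\overline K$ is closed, $x^* \notin \overline K$, and $B_\eps(K)$ is open), and apply Lemma~\ref{SSP-basic}. The main obstacle is the central step, where the density-one hypothesis on $K$ is essential to align the mass of $K - y_0$ with the translated positivity set of $j$, and where one must exploit that $z^* \neq 0$ so that $E_\alpha$ lives at positive distance from the origin; otherwise the continuity-in-$x$ argument would be thwarted by the possible singularity of $j$ at $0$.
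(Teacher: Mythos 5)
Your argument is correct and follows the same overall strategy as the paper's proof: reduce to condition (\ref{pos-j}) of Lemma~\ref{SSP-basic} via $J(x,y)\ge j(x-y)$, and verify it near a point of the form (point of $K$) plus (density point of a superlevel set of $j$ close to the origin). Two sub-steps are handled differently, and both of your variants work. To guarantee that the translated point leaves $\overline K$, the paper maximizes the linear functional $v\mapsto v\cdot p$ over the compact set $\overline K$ and then perturbs the maximizer to a nearby point of $K$, obtaining the quantitative bounds $\tfrac{|p|}{2}\le \dist(x_0,K)<\eps$; you instead argue that $K+z^*\subset\overline K$ would iterate to $\overline K+nz^*\subset\overline K$ for all $n$, contradicting $\diam K<\infty$ --- a cleaner qualitative argument which suffices, since all you need is $x^*\notin\overline K$ (positive distance to $K$ for the shrunken ball $M$ then being automatic). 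For the passage from a point to a neighborhood, the paper proves the lower bound only at $x_0$ and invokes continuity of $x\mapsto\int_K j(x-y)\,dy$ on $\R^N\setminus\overline K$; you keep the bound uniform from the start via continuity of the convolution $1_{B_r}\ast 1_{E_\alpha}$ together with the inclusion--exclusion bound $|S_1\cap S_2|\ge |S_1|+|S_2|-|B_r|$ for $S_1,S_2\subset B_r(0)$, which avoids justifying continuity of the full integral and never uses evenness of $j$ (which the paper carries along but does not really need in this lemma). Two small blemishes: the claim that the density point $z^*$ of $E_\alpha$ is ``necessarily'' nonzero is false --- if $j\ge\alpha$ a.e.\ near $0$, the origin can itself be a density-one point of $E_\alpha$; you should instead note that, since a.e.\ point of $E_\alpha$ is a density point and $|E_\alpha|>0$, a nonzero density point can be \emph{chosen}. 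Similarly, the closing remark that $E_\alpha$ ``lives at positive distance from the origin'' is neither true in general nor needed: the continuity of $1_{B_r}\ast 1_{E_\alpha}$ only involves indicator functions and is unaffected by the singularity of $j$; the sole role of $z^*\neq 0$ is in the unboundedness contradiction.
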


\begin{proof}
Let $K$ be as assumed and let $\epsilon>0$. Assumption (J2) implies that there exists $\delta>0$ such that the set 
$$
A_0 := \{z \in B_{\eps}(0) \setminus \{0\}\::\: j(z) \ge \delta\}
$$
has positive measure. Using (\ref{eq:density-one-assertion}), we may then choose a subset $A \subset A_0$ with $|A|>0$ and such that every point in $A$ is of density one for $A$. Since $j$ is even and therefore $A_0=-A_0$, we may also assume that $A=-A$. We then fix $p\in A$. Since $\overline K$ is compact by assumption, there exists a point $v_0 \in \overline K$ which maximizes the function 
$$
\overline K \to \R,\qquad v \mapsto v \cdot p,
$$
where $\cdot$ denotes the euclidean inner product on $\R^N$. This readily implies that $\dist(v_0+p,K)=|p|$. 
Moreover, since $0<|p|<\eps$, we may choose $w_0 \in K$ with $|v_0-w_0|<\min \{
\frac{|p|}{2},\eps-|p|\}$. Putting $x_0:= w_0 +p$, we then have
$$
\dist(x_0,K) \ge \dist(v_0+p,K) -|v_0-w_0| \ge \frac{|p|}{2}>0
$$
and
$$
\dist(x_0,K) \le \dist(v_0+p,K) +|v_0-w_0| <\eps.
$$
Moreover, since $p$ is a point of density one for $A$ and $w_0$ is a point of density one for $K$ by assumption, we may fix $r>0$ such that 
$$
|B_r(p)\cap A| \ge \frac{3}{4} |B_r(0)| \qquad \text{and}\qquad  |B_r(w_0) \cap K| \ge \frac{3}{4} |B_r(0)|.
$$
Then we have
\begin{align*}
\int_K j(x_0-y)\ dy &= \int_{x_0-K}j(y)\,dy \geq \delta |(x_0-K) \cap A| \ge \delta |(x_0-K) \cap B_r(p) \cap A|\\
&\ge \delta \Bigl(|B_r(p) \cap A|- |B_r(p) \setminus (x_0-K)|\Bigr)= \delta \Bigl(|B_r(p) \cap A|- |B_r(w_0) \setminus K|\Bigr)\\
&\ge \delta \Bigl(\frac{3}{4}|B_r(0)|-\frac{1}{4}|B_r(0)|\Bigr)= \frac{\delta}{2}|B_r(0)|>0.
\end{align*}
Since the map $x\mapsto \int_K j(x-y)\ dy$ is continuous on $\R^N \setminus \overline K$, we find an open neighborhood $M\subset\subset B_{\epsilon}(K)\setminus \overline K $ of $x_0$ such that 
\begin{equation*}
\inf_{x\in M} \int_K j(x-y)\ dy>0.  
\end{equation*}
Thus (\ref{pos-j}) holds, and hence the pair $(K,M)$ satisfies (SSP) by Lemma~\ref{SSP-basic}.
 \end{proof}
	
The next ingredient in the proof of Theorem \ref{hopf-simple2-variant1} are certain properties of lattices. In the following, we let $v_1,\ldots, v_N \in \R^N$ be linearly independent, and we let 
$$
G:=\sum \limits_{k=1}^{N}\Z v_k \;\subset\; \R^N
$$ 
be the lattice generated by $v_1,\dots,v_N$. The following basic observation is well-known. For the readers convenience, we include the short proof in the appendix.

\begin{lemma}\label{lattice-intersection}
Let $r>\frac{1}{2}\sum \limits_{k=1}^N|v_k|$ and $x_0 \in \R^N$. Then $B_r(x_0) \cap G\neq \emptyset$. 
\end{lemma}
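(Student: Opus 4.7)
The plan is short: since $v_1,\dots,v_N$ are linearly independent in $\R^N$, they form a basis, so any point $x_0\in\R^N$ admits a unique expansion $x_0=\sum_{k=1}^N \alpha_k v_k$ with $\alpha_k\in\R$. The natural candidate for a nearby lattice point is the ``rounded'' one obtained by replacing each $\alpha_k$ by an integer within distance $1/2$.

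More precisely, I would choose integers $n_k\in\Z$ with $|\alpha_k-n_k|\le \tfrac{1}{2}$ for each $k=1,\dots,N$, and set $y:=\sum_{k=1}^N n_k v_k\in G$. Then by the triangle inequality,
\[
|x_0-y| \;=\; \Bigl|\sum_{k=1}^N (\alpha_k-n_k)\,v_k\Bigr| \;\le\; \sum_{k=1}^N |\alpha_k-n_k|\,|v_k| \;\le\; \tfrac{1}{2}\sum_{k=1}^N |v_k| \;<\; r,
\]
using the assumption $r>\tfrac{1}{2}\sum_{k=1}^N|v_k|$ in the last step. Hence $y\in B_r(x_0)\cap G$, proving the claim.

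There is essentially no obstacle; the only thing to note is that the bound $\tfrac{1}{2}\sum_k|v_k|$ is exactly the worst case of this rounding argument (achieved when all coordinates are half-integers), which is why the hypothesis of the lemma is stated in precisely this form. The argument does not use anything about the kernel $J$ or any analytic structure, only elementary linear algebra and the triangle inequality.
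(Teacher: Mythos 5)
Your proof is correct and is essentially the paper's own argument: the paper covers $\R^N$ by translates of the fundamental domain $\{\sum_k \alpha_k v_k : |\alpha_k|\le \tfrac12\}$, which is exactly your coordinate-rounding step phrased geometrically, followed by the same triangle-inequality bound $\tfrac12\sum_k |v_k| < r$.
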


The next lemma is concerned with discrete paths in $G$ joining two given lattice points $x, x' \in G$. More precisely, we need an estimate for the diameter of those paths depending on the distance of $x$ and $x'$.

\begin{lemma}\label{lattice-path}	
For every $\rho \ge \sum \limits_{k=1}^{N}|v_k|$ and every $x,x' \in G$ with $|x-x'|<\rho$ there exist $n \in \N$ and points 
$$
w_0,\ldots, w_n \in B_{4^{N-1} \rho}(x')
$$
such that $w_0=x'$, $w_n=x$ and 
$$
w_{\ell}-w_{\ell-1} \in \{ \pm v_1,\dots, \pm v_N\} \qquad \text{for $\ell = 1,\dots,n$.}
$$
\end{lemma}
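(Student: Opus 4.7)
The plan is to prove the lemma by induction on $N$. For the base case $N = 1$, the lattice is $G = \Z v_1$; writing $x - x' = k v_1$ with $|k v_1| < \rho$ and traversing the segment in $|k|$ unit steps gives a path whose intermediate points all lie on $[x', x]$ and hence in $B_\rho(x') = B_{4^0 \rho}(x')$. For the inductive step, I would introduce $H := \operatorname{span}(v_1, \dots, v_{N-1})$ and $G' := \sum_{k=1}^{N-1}\Z v_k \subset H$, and decompose $x - x' = n v_N + z$ uniquely with $n \in \Z$ and $z \in G'$. If $n = 0$, apply the inductive hypothesis directly in $G'$ with the same $\rho$, obtaining a path of radius $\le 4^{N-2}\rho < 4^{N-1}\rho$. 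Otherwise, after flipping the sign of $v_N$ if needed, assume $n \ge 1$.

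The idea is to discretize the segment from $x'$ to $x$ into $n$ pieces of step $v_N$, inserting small $G'$-corrections at each stage so that the lattice anchors shadow the segment. Concretely, set $z_0 := 0$, $z_n := z$ and, for each $i \in \{1, \dots, n-1\}$, use Lemma~\ref{lattice-intersection} applied to the rank-$(N-1)$ lattice $G' \subset H$ to pick $z_i \in G'$ with $|z_i - (i/n)z| \le \tfrac12\sum_{k<N}|v_k| \le \rho/2$. Define the anchors $y_i := x' + i v_N + z_i$; then $y_0 = x'$, $y_n = x$, and the triangle inequality yields $|y_i - x'| \le \tfrac32\rho$. The full path is obtained by concatenating, for each $i \in \{0, \dots, n-1\}$, the single transition $y_i \to y_i + v_N$ followed by a $G'$-path from $y_i + v_N$ to $y_{i+1}$ supplied by the inductive hypothesis.

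To verify the bound $4^{N-1}\rho$, observe that $|z| < (n+1)\rho$ (using $|x - x'| < \rho$ and $|v_N| \le \rho$), so for interior transitions $|z_{i+1} - z_i| \le |z|/n + \rho \le 5\rho/2$ (since $|z|/n \le (1+1/n)\rho \le \tfrac32 \rho$ whenever $n \ge 2$), while the edge case $n = 1$ gives directly $|z_1 - z_0| = |z| < 2\rho$. Apply the inductive hypothesis to each $G'$-subpath with $\rho' := 5\rho/2 \ge \sum_{k<N}|v_k|$, and invoke reversibility of the $G'$-path to center the ball provided by the hypothesis at $y_{i+1}$ rather than $y_i + v_N$. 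Then the intermediate $G'$-points lie in $B_{4^{N-2}\cdot 5\rho/2}(y_{i+1})$ and hence within distance $\tfrac32\rho + \tfrac52 \cdot 4^{N-2}\rho$ from $x'$. This bound equals $4^{N-1}\rho$ exactly when $N = 2$ and is strictly smaller for $N \ge 3$, since $4^{N-1} - \tfrac52 \cdot 4^{N-2} = \tfrac32 \cdot 4^{N-2} \ge \tfrac32$.

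The main obstacle will be the tight numerical bookkeeping required to recover the specific factor $4^{N-1}$. Two refinements are essential: (i) centering the inductive ball at the forward anchor $y_{i+1}$ instead of $y_i + v_N$, which is legitimate because a $G'$-path from $A$ to $B$ reverses to one from $B$ to $A$; and (ii) exploiting $|z|/n \le (1 + 1/n)\rho$ (so $|z|/n \le \tfrac32\rho$ when $n \ge 2$) rather than the naive $|z|/n \le 2\rho$. Beyond this arithmetic, the geometric heart of the argument is Lemma~\ref{lattice-intersection}: its covering-radius bound for $G' \subset H$ is what permits the anchors $y_i$ to shadow the straight line from $x'$ to $x$ with bounded error at every discretization step.
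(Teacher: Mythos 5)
Your proof is correct, but it proceeds by a genuinely different mechanism than the paper's. The paper also inducts on the rank of the sublattice, but it strengthens the induction statement (Claim A requires both $|x|<\rho+\frac12\sum_{i<j}|v_i|$ and $\dist(x,\cW_{j-1})<\rho$) and then argues by contradiction with a minimal coefficient $k$ of $v_{j+1}$, peeling off one $v_{j+1}$-step at a time and re-anchoring via the fundamental domain of the sublattice; the radius $4^{N-1}\rho$ emerges from nesting balls of radius $4^{j-1}\cdot 2\rho$ inside $B_{4^j\rho}$. You instead give a direct construction: discretize the segment from $x'$ to $x$ into $n$ steps of $v_N$, use the covering-radius bound of Lemma~\ref{lattice-intersection} (applied within $H=\mathrm{span}(v_1,\dots,v_{N-1})$) to choose anchors $z_i$ shadowing the segment within $\rho/2$, and join consecutive anchors by one $v_N$-step plus an inductive $G'$-path at the dilated parameter $\rho'=\frac52\rho$. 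Your bookkeeping is right: $|y_i-x'|<\frac32\rho$, $|z_{i+1}-z_i|<\frac52\rho$ (using $|z|/n<(1+\frac1n)\rho$ and the separate $n=1$ case), and $\frac32\rho+\frac52\cdot4^{N-2}\rho\le 4^{N-1}\rho$ with equality only at $N=2$, where the strictness of the open balls still saves the statement; the reversal trick to center the inductive ball at $y_{i+1}$ is legitimate because the step set $\{\pm v_1,\dots,\pm v_N\}$ is symmetric, and without it (or without the sharper $|z|/n$ bound) the constant $4^{N-1}$ would indeed be missed at $N=2$. What your route buys is a constructive, contradiction-free argument that uses the lemma itself (uniform in $\rho$) as the induction hypothesis rather than a strengthened two-condition claim; what the paper's route buys is that it never needs to track anchors along the segment, only projections onto $\cW_j$. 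Two cosmetic points you should make explicit in a write-up: the induction statement should be phrased for $m\le N$ linearly independent vectors in $\R^N$ (or within their span), so that it genuinely applies to $G'\subset H$, and Lemma~\ref{lattice-intersection} must be invoked in the subspace $H$ for the points $(i/n)z\in H$, with the bound $\frac12\sum_{k<N}|v_k|$ coming from the closed fundamental parallelepiped exactly as in the paper's proof of that lemma; neither point is a gap.
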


The proof of this lemma is somewhat complicated, and we could not find a result of this type in the literature. We do not claim that the factor $4^{N-1}$ is optimal, but it is sufficient for our purposes. We postpone the proof of Lemma~\ref{lattice-path} to the appendix. We are now prepared to complete the proof of Theorem~\ref{hopf-simple2-variant1}.

	\begin{proof}[Proof of Theorem~\ref{hopf-simple2-variant1} (completed)] 
		We suppose that $u\not\equiv 0$ in $\Omega$, and we let 
$$
W:= \Bigl \{y \in \Omega\::\: \text{$\underset{B_{r}(y)}\essinf\: u >0$ for $r>0$ sufficiently small} \Bigr\}
$$
We then need to show that $W=\Omega$. The proof is divided into three steps.\\[0.1cm] 
\textit{\underline{Claim 1:}} $W$ is nonempty.\\[0.1cm]
		To see this, we first note that, since $u \not \equiv 0$ in $\Omega$, there exists $\delta>0$ and 
a measurable subset $K \subset \Omega$ with $|K|>0$ and such that
\begin{equation}
  \label{eq:K-inf-1-var-2}
\underset{K}\essinf\: u >0.
\end{equation}
Making $K$ smaller if necessary, we may also assume that $\overline K \subset \Omega$ and $\diam\ K<\infty$. 
Moreover, using~(\ref{eq:density-one-assertion}) and removing a set of measure zero from $K$ if necessary, we may assume that every point $x \in K$ is a point of density one for $K$. Consequently, with $\eps:= \dist(K,\partial \Omega)>0$, Lemma~\ref{subsolutionstart} yields the existence of a nonempty open set 
$$
M \subset \subset B_{\epsilon}(K)\setminus \overline K \subset \Omega
$$ 
such that the pair $(K,M)$ satisfies property (SSP). Then (\ref{eq:K-inf-1-var-2}) and Lemma~\ref{positivity-inherit} yield that $u$ is strictly positive in $M$. Consequently, $M \subset W$ and $W$ is nonempty.
 Thus Claim 1 is proved.\\[0.1cm]
\textit{\underline{Claim 2:}} If $x \in W$, then also $B_{r(x)}(x) \subset W$, where 
\begin{equation}
  \label{eq:def-r-x}
r(x):= \frac{4^{-N}}{3}\dist(x,\partial \Omega) \:>\:0.
\end{equation}
To see this, let $x_0 \in W$, put $r_0:= r(x_0)$, and let $x_1 \in B_{r_0}(x_0)$. Moreover, let $\epsilon \in (0,r_0)$ such that 
\begin{equation}
  \label{eq:epsilon-essinf}
\underset{B_{\epsilon}(x_0)}\essinf\: u>0.  
\end{equation}
By (J2), we may choose $\eps_1  \in (0,\frac{\epsilon}{4N})$ such that the function $j$ does not vanish a.e. in $B_{2\eps_1}(0) \setminus B_{\eps_1}(0)$. Consequently, there exists a subset $A\subset B_{2\eps_1}(0) \setminus B_{\eps_1}(0)$ with $|A|>0$ and  
$$
\underset{A}\essinf\: j >0.
$$
Since $j$ is an even function, we may also assume that $A=-A$. Moreover, using (\ref{eq:density-one-assertion}) and removing a set of measure zero from $A$ if necessary, we may also assume that every point in $A$ has density one with respect to $A$.
 As a consequence, for $0<\eps'<\eps'' <\eps_1$ we have 
\begin{equation}
  \label{eq:positivity-integral-v-j-0}
\inf_{x \in B_{\eps'}(A)}\int_{B_{\eps''}(x)}j(z)\,dz >0.
\end{equation}
Since $A$ has positive measure, we may now choose linearly independent vectors $v_1,\ldots,v_{N}\in A$. By (\ref{eq:positivity-integral-v-j-0}), we then have 
\begin{equation}
  \label{eq:positivity-integral-v-j}
\inf_{x \in B_{\eps'}(\pm v_j)}\int_{B_{\eps''}(x)}j(z)\,dz >0 \qquad \text{for $j=1,\dots,N$ and $0<\eps'<\eps'' <\eps_1$.}
\end{equation}
Let $G:= \sum \limits_{j=1}^n \Z v_j \subset \R^N$ be the lattice generated by $v_1,\dots,v_N$. Since 
$$
\sum\limits_{j=1}^{N}|v_j|\leq 2N\epsilon_1<\frac{\epsilon}{2},
$$
Lemma~\ref{lattice-intersection} implies that the intersection of $B_{\frac{\epsilon}{2}}(x_0)$ with the translated lattice $x_1 +G$ is nonempty. Hence there exists 
\begin{equation}
  \label{eq:def-x-0-prime}
x_0'\in (x_1+G) \cap B_{\frac{\epsilon}{2}}(x_0).  
\end{equation}
Furthermore we have 
		$$
		|x_1-x_0'|\leq |x_1-x_0|+|x_0-x_0'|\leq r_0 + \frac{\eps}{2} < 2 r_0 
\quad\text{and}\quad \sum\limits_{j=1}^{N}|v_j|< \frac{\epsilon}{2} < 2 r_0 
		$$
Hence Lemma \ref{lattice-path} yields the existence of 
\begin{equation}
  \label{eq:w_j-inclusion}
w_1,\ldots,w_n \in B_{2 \cdot 4^{N} r_0}(x_0') 
\end{equation}
such that $w_1=x_0'$, $w_n=x_1$, and 
\begin{equation}
  \label{eq:path-v-j-w-j}
w_{j+1}-w_j\in  \{\pm v_1,\dots,\pm v_N\} \qquad \text{for $j=1,\dots,n-1$.}
\end{equation}
For $j \in \N$ we now define $\epsilon_j:=\frac{\epsilon_1}{j}$, and we set 
$$
M_{j}:=B_{\epsilon_{2j-1}}(w_j)\quad \text{and}\quad  K_j:=B_{\epsilon_{2j}}(w_{j}) \subset M_j  \qquad \text{for $j=1,\dots,n$.}
$$
We note that 
\begin{equation}
  \label{eq:M-j-subset-Omega}
M_j \subset \Omega \qquad \text{for $j=1,\dots,N$,}
\end{equation}
since for $j=1,\dots,N$ we have
$$
\dist(w_j,\partial \Omega) \ge \dist(x_0',\partial \Omega)- 2 \cdot 4^{N} r_0 \ge \dist(x_0,\partial \Omega) - \frac{\eps}{2} - 2 \cdot 4^{N} r_0 = 4^{N} r_0 - \frac{\eps}{2} 
$$
by (\ref{eq:def-r-x}), (\ref{eq:def-x-0-prime}) and (\ref{eq:w_j-inclusion}) and thus 
$$
\dist(x,\partial \Omega) \ge \dist(w_j,\partial \Omega)-\eps_{2j-1} \ge 4^{N} r_0 - \frac{\eps}{2} -\eps_{1} \ge 4^{N} r_0 - \eps \ge (4^N-1)r_0>0 \quad 
\text{for $x \in M_j$.}
$$
We claim that 
\begin{equation}
  \label{eq:ssp-k-j-m-j+1}
\text{for $j=1,\dots,n-1$, the pair $(K_j,M_{j+1})$ satisfies property (SSP).}  
\end{equation}
Indeed, we clearly have $\diam\ K_j < \infty$, whereas $M_{j+1} \subset \R^N$ is an open set with 
$\dist(M_{j+1},K_j)>0$ since, by (\ref{eq:path-v-j-w-j}),
$$
|w_{j+1}-w_k| \ge \min_{j=1,\dots,N} |v_j| \ge \eps_1 > \eps_{2j} +\eps_{2j+1} \qquad \text{for $j=1,\dots,n-1$.} 
$$
Moreover, we have that 
\begin{align*}
\inf_{x \in M_{j+1}} \int_{K_j}J(x,y)\ dy \ge \inf_{x 
\in B_{\epsilon_{2j+1}}(w_{j+1})}\; \int_{B_{\eps_{2j}(w_j)}}j(x-y)\ dy&= \inf_{x \in B_{\epsilon_{2j+1}}(w_{j+1}-w_j)}\; \int_{B_{\eps_{2j}(w_j)}}j(x+w_j-y)\ dz\\
&= \inf_{x \in B_{\epsilon_{2j+1}}(w_{j+1}-w_j)}\; \int_{B_{\eps_{2j}(x)}}j(z)\ dz>0
\end{align*}
by (\ref{eq:positivity-integral-v-j}) and (\ref{eq:path-v-j-w-j}). Hence (\ref{eq:ssp-k-j-m-j+1}) follows from Lemma~\ref{SSP-basic}.
 Inductively, we now show that 
\begin{equation}
  \label{eq:induction-statement}
\underset{K_j}{\essinf}\:u >0 \qquad \text{for $j=1,\ldots,n$.}
\end{equation}
For $j=1$ this is true since, by definition and (\ref{eq:def-x-0-prime}), 
$$
K_1=B_{\frac{\eps_1}{2}}(w_{1})= B_{\frac{\eps_1}{2}}(x_0') \subset B_\eps(x_0)
$$ 
and (\ref{eq:epsilon-essinf}) holds. Moreover, if (\ref{eq:induction-statement}) holds for some $j \in \{1,\dots,n-1\}$, then (\ref{eq:M-j-subset-Omega}), (\ref{eq:ssp-k-j-m-j+1}) and Lemma~\ref{positivity-inherit} imply that $u$ is strictly positive on $M_{j+1}$, and therefore (\ref{eq:induction-statement}) holds for $j+1$ in place of $j$ since $K_{j+1} \subset \subset M_{j+1}$.\\
Applying (\ref{eq:induction-statement}) with $j=n$ yields 
$$
\underset{B_{\eps_{2n}}(x_1)}{\essinf}\:u =\underset{K_n}{\essinf}\:u \: >0
$$
and thus $x_1 \in W$. Hence Claim 2 is proved.\\[0.1cm]
We may now complete the proof of the theorem as follows. By Claim 1 we know that $W$ is nonempty. Moreover, by definition, $W \subset \Omega$ is open. Since $\Omega$ is connected, it thus suffices to show that $W$ is relatively closed in $\Omega$. To see this, let $(x_n)_n$ be a sequence in $W$ with 
$x_n \to x \in \Omega$ as $n \to \infty$. Since, by definition (\ref{eq:def-r-x}), 
$$
r(x_n) \to r(x)>0 \qquad \text{as $n \to \infty$,}
$$
there exists $n \in \N$ such that $x \in B_{r(x_n)}(x_n)$. Consequently, $x \in W$ by Claim 2. The proof is finished.
	\end{proof}
	
Finally, we complete the 

\begin{proof}[Proof of Theorem~\ref{main-theorem}]
We assume that $u \not \equiv 0$ in $\Omega$, and we let $\Omega' \subset \subset \Omega$ be an open set with $u \not \equiv 0$ in $\Omega'$. By assumption and Corollary~\ref{sec:supersolution-variational-supersolution}, the function $u \in \cV^J(\Omega')$ is a variational supersolution of the equation $Iu =c(x)u$ in $\Omega'$. Since also $u \ge 0$ in $\R^N$ by assumption and since (j\,\!1), (j\,\!2) imply (J1) -- (J3) for the kernel $(x,y) \mapsto J(x,y)= j(x-y)$, Theorem \ref{hopf-simple2-variant1} yields that $u$ is strictly positive in $\Omega'$. Since $\Omega$ is connected, it now follows that $u$ is strictly positive in $\Omega$. 
\end{proof}

\appendix 

\section{Appendix}\label{a}

In this section we give the proof of Lemmas~\ref{lattice-intersection} and \ref{lattice-path}. So in the following, we let $v_1,\ldots, v_N \in \R^N$ be linearly independent, and we let $G:=\sum \limits_{k=1}^{N}\Z v_k \subset \R^N$ be the corresponding lattice. 
We recall Lemma~\ref{lattice-intersection}.

\begin{lemma}\label{lattice-intersection-appendix}
Let $x_0 \in \R^N$ and $r>\frac{1}{2}\sum \limits_{k=1}^N|v_k|$. Then $B_r(x_0) \cap G\neq \emptyset$. 
\end{lemma}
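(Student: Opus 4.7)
The plan is straightforward: express $x_0$ in the basis $v_1,\dots,v_N$ and round each coordinate to the nearest integer. Since $v_1,\dots,v_N$ are linearly independent, they form a basis of $\R^N$, so we may write
$$
x_0 = \sum_{k=1}^N \alpha_k v_k \qquad \text{with unique $\alpha_1,\dots,\alpha_N \in \R$.}
$$

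For each $k$, I would pick an integer $n_k \in \Z$ with $|\alpha_k - n_k| \le \tfrac{1}{2}$ (the nearest integer to $\alpha_k$). Setting $g := \sum_{k=1}^N n_k v_k$, this is by construction an element of $G$, and the triangle inequality gives
$$
|x_0 - g| \;=\; \Bigl|\sum_{k=1}^N (\alpha_k - n_k)\, v_k\Bigr| \;\le\; \sum_{k=1}^N |\alpha_k - n_k|\,|v_k| \;\le\; \frac{1}{2}\sum_{k=1}^N |v_k| \;<\; r.
$$
Hence $g \in B_r(x_0) \cap G$, proving the lemma.

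There is no real obstacle here: the argument is essentially a one-line application of coordinate-wise rounding in the (possibly non-orthogonal) basis $v_1,\dots,v_N$. The only subtlety worth flagging is that the bound $\tfrac{1}{2}\sum |v_k|$ arises precisely from the worst-case rounding error $\tfrac{1}{2}$ per coordinate, which is why the hypothesis $r > \tfrac{1}{2}\sum |v_k|$ is the natural one (and why the strict inequality matters only to ensure that $|x_0-g|<r$ rather than $\le r$).
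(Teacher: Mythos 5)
Your proof is correct and is essentially the paper's own argument: the paper phrases it via the fundamental domain $\Pi=\bigl\{\sum_k \alpha_k v_k : |\alpha_k|\le \tfrac12\bigr\}$ whose $G$-translates cover $\R^N$, which is exactly your coordinate-wise rounding in the basis $v_1,\dots,v_N$ followed by the same triangle-inequality bound $\tfrac12\sum_k |v_k|<r$. No issues.
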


\begin{proof}
Consider the fundamental domain $\Pi:= \bigl \{\sum \limits_{k=1}^N \alpha_k v_k \::\: \text{$-\frac{1}{2} \le \alpha_k \le \frac{1}{2}$ for $k=1,\dots,N$} \bigr\}.$ Since the translates $v + \Pi$, $v \in G$ cover the whole space $\R^N$, there exists $v \in G$ with $x_0 -v \in \Pi$. Hence $x_0 -v = \sum \limits_{k=1}^N \alpha_k v_k$ with some $\alpha_k \in [-\frac{1}{2},\frac{1}{2}]$, $k=1,\dots,N$ and therefore $|x_0 - v| \le \frac{1}{2} \sum \limits_{k=1}^N |v_k|<r$. Consequently, $v \in G \cap B_r(x_0)$, so that $G \cap B_r(x_0) \not = \varnothing.$ 
\end{proof}

We now turn to the proof of Lemma~\ref{lattice-path}. It will be convenient to use the following definition. 
 
\begin{defi}
\label{sec:appendix-1-def-path}
Let $A \subset \R^N$ be an arbitrary subset, and let $x, x' \in A$ such that $x-x' \in G$. In the following, a {\em $G$-path in $A$ from $x'$ to $x$} is defined as an ordered set of points $w_0,\ldots, w_n\in A$ such that $w_0=x'$, $w_n=x$ and 
$$
w_{\ell}-w_{\ell-1} \in G_*:=\{ \pm v_1,\dots, \pm v_N\} \qquad \text{for $\ell = 1,\dots,N$.}
$$
\end{defi}

With this definition, we may now reformulate Lemma~\ref{lattice-path} as follows.

\begin{lemma}\label{lattice-path-appendix}	
For every $\rho \ge \sum \limits_{k=1}^N |v_k|$ and every $x,x' \in G$ with $|x-x'|<\rho$ there exists a $G$-path in $B_{4^{N-1} \rho}(x')$ from $x'$ to $x$. 
\end{lemma}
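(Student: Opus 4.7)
The plan is to prove the lemma by induction on the dimension $N$, reducing at each step to the $(N-1)$-dimensional sublattice obtained by ``peeling off'' the generator $v_N$.

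The base case $N=1$ is immediate: in $G = \Z v_1$, any two points $x,x'$ with $x-x' = n v_1$ are joined by the $G$-path $w_\ell := x' + \ell\,\mathrm{sgn}(n) v_1$, $\ell = 0,\dots,|n|$, which lies on the segment $[x',x]$ and so inside $B_{|x-x'|}(x') \subset B_\rho(x') = B_{4^{0}\rho}(x')$.

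For the inductive step, suppose the statement holds in dimension $N-1$. After translating, assume $x'=0$. Set $H := \mathrm{span}(v_1,\dots,v_{N-1})$, $G' := \sum_{k<N}\Z v_k \subset H$, $R' := \sum_{k<N}|v_k| \le \rho$, and consider the parallel affine layers $H_m := H + m v_N$, for which $G \cap H_m = m v_N + G'$. Decompose $x = y + n_N v_N$ uniquely with $y \in G'$ and $n_N \in \Z$. If $n_N = 0$, the inductive hypothesis in $G' \subset H$ immediately produces a $G'$-path (hence a $G$-path) from $0$ to $x$ in $B_{4^{N-2}\rho}(0)$, which is contained in $B_{4^{N-1}\rho}(0)$. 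Otherwise, assume without loss of generality $n_N > 0$, and construct a ``staircase'' $y_0,y_1,\dots,y_{n_N} \in G$ with $y_m \in G \cap H_m$, $y_0 = 0$, $y_{n_N}= x$, and each $y_m$ close to the chord $[0,x]$: applying Lemma~\ref{lattice-intersection-appendix} to $G'$ inside $H$, pick $u_m \in G'$ with $|u_m - (m/n_N) y|\le R'/2$ (and $u_0=0$, $u_{n_N}= y$), and set $y_m := u_m + m v_N$. Then $|y_m - (m/n_N)x| \le R'/2$, hence $|y_m| \le |x| + R'/2 \le 3\rho/2$.

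To connect $y_m$ to $y_{m+1}$, take one $+v_N$-step to $y_m + v_N \in H_{m+1}$ and then apply the inductive hypothesis inside $G'$ to build a $G'$-path from $y_m + v_N$ to $y_{m+1}$. The key elementary estimate is $|u_{m+1}-u_m|\le |y|/n_N + R'$, where decomposing $x$ orthogonally with respect to $H$ (writing $x = x_H + n_N h e$ for $e$ a unit normal to $H$ and $h = \langle v_N, e\rangle$) and using $|n_N|\ge 1$ yields $|y|/n_N \le |x|/|n_N| + |v_N| \le 2\rho$, so the horizontal jumps are bounded by $3\rho$. The inductive hypothesis, applied with radius parameter $3\rho$ (dominating $R'$), then gives each horizontal sub-path inside $B_{4^{N-2}\cdot 3\rho}(y_m + v_N)$, and concatenating the staircase with these sub-paths produces a $G$-path from $0$ to $x$ contained in a ball of radius $3\rho/2 + |v_N| + 3\cdot 4^{N-2}\rho$ around $0$, which is bounded by $4^{N-1}\rho$.

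The main obstacle is the careful bookkeeping of radii so that the factor $4^{N-1}$ is achieved at every inductive step: the crucial ingredient is the estimate $|y|/n_N \le |x|+|v_N|$, without which the horizontal displacements between consecutive layers would grow with the potentially large coefficient $n_N = \langle x,e\rangle/h$. A secondary point is to verify that the three contributions (staircase radius $3\rho/2$, the single $v_N$-step, and the inductively produced horizontal sub-paths) fit inside $B_{4^{N-1}\rho}(0)$; this is comfortable for large $N$ but tight for small $N$ and may require either a slightly sharper choice of $u_m$ (for instance, by choosing $u_m$ to minimize $|u_m|$ over the coset $(m/n_N)y + G' \cap B_{R'/2}$) or a separate direct treatment of $N=2$ via interleaving of $v_1$- and $v_2$-steps that tracks the chord $[0,x]$.
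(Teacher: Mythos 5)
Your staircase construction is essentially correct and gives a genuinely different implementation of the same underlying strategy as the paper. The paper also inducts on the number of generators and peels off the last one, but its inductive claim is a strengthened statement (requiring both $|x|<\rho+\frac{1}{2}\sum_{i<j}|v_i|$ and $\dist(x,\cW_{j-1})<\rho$), and its inductive step is a minimal-counterexample argument: it removes one copy of $v_{j+1}$ at a time, recenters by subtracting a nearby point $y^*$ of the sublattice $G_j$ chosen via the fundamental-domain argument of Lemma~\ref{lattice-intersection-appendix}, and applies the hypothesis with $2\rho$ in place of $\rho$. You instead lay down all layers at once, choosing $u_m\in G'$ near the chord points $(m/n_N)y$ and connecting consecutive layers by $(N-1)$-dimensional paths. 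The key quantitative point is the same in both proofs: the horizontal displacement per layer is of size $O(\rho)$, independent of the possibly large coefficient $n_N$ (for you via $|y|/n_N\le |x|+|v_N|$, for the paper via the recentering). Your route is more constructive and avoids the contradiction/minimality bookkeeping; the paper's version threads its constants so that the radius $4^{j-1}\rho$ closes uniformly at every step of the induction without any special low-dimensional case.

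The one genuine defect, which you yourself flag, is the radius bookkeeping at $N=2$. Your containment estimate needs $\frac{3\rho}{2}+|v_N|+3\cdot 4^{N-2}\rho\le 4^{N-1}\rho$, i.e. $4^{N-2}\ge \frac{3}{2}+|v_N|/\rho$, which holds for all $N\ge 3$ but fails for $N=2$ (there $4^{N-2}=1<\frac{3}{2}$), so as written the induction does not pass from $N=1$ to $N=2$. The repair is already contained in your base case: for $N=2$ the sub-path from $y_m+v_2$ to $y_{m+1}$ lies on the straight segment joining its endpoints, and both endpoints have norm less than $\frac{3\rho}{2}+|v_2|\le \frac{5\rho}{2}<4\rho$, so by convexity the entire path stays in $B_{4\rho}(0)$; note that merely sharpening the jump bound to $|u_{m+1}-u_m|\le |x|+|v_N|+R'<2\rho$ (with $R'=\sum_{k<N}|v_k|$), and hence invoking the hypothesis with $2\rho$ instead of $3\rho$, does not by itself rescue $N=2$. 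A minor further point: Lemma~\ref{lattice-intersection-appendix} as stated only yields $|u_m-(m/n_N)y|<r$ for every $r>R'/2$; either quote its fundamental-domain proof to get the bound $\le R'/2$, or work with $R'$ instead, which changes none of the estimates for $N\ge 3$. With these adjustments your proof is complete.
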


\begin{proof}
Without loss, we may assume that $x'=0$ in the following. We consider the affine subspaces 
$$
\cW_0:= \{0\},\qquad \cW_j:=\sum \limits_{i=1}^{j} \R v_i , \qquad j=1,\dots,N
$$
and the sublattices 
$$
G_{j}:= \sum_{k=1}^{j}  \Z v_k = G \cap \cW_j,\qquad j=1,\dots,N.
$$
We then prove the following claim by induction on $j$.\\ 
{\em { Claim A:} Let $j \in \{1,\dots,N\}$. For every $\rho \ge \sum \limits_{k=1}^N |v_k|$ and every $x \in G_j$ with 
$$
|x|< \rho+ \frac{1}{2} \sum_{i=1}^{j-1}|v_j|\qquad \text{and}\qquad \dist(x,\cW_{j-1})< \rho
$$
there exists a $G$-path in $B_{4^{j-1} \rho}(0)$ from $0$ to $x$.}\\

This is true for $j=1$, since in this case $x= k v_1$ for some $k \in \Z$, $|x|= \dist(x,\cW_0) < \rho$, and there is an obvious one-dimensional $G$-path in $B_{\rho}(0)$ from $0$ to $x$.

We now assume that Claim A is true for some fixed $j \in \{1,\dots,N-1\}$, i.e. 
\begin{equation}
  \label{eq:induction-hypothesis}
\left\{
  \begin{aligned}
&\text{For every $\rho \ge \sum \limits_{k=1}^N |v_k|$ and every $y \in G_{j}$ with $|y|< \rho+ \frac{1}{2} \sum_{i=1}^{j-1}|v_i|$}\\    
&\text{ and $\dist(y,\cW_{j-1})< \rho$ there exists a $G$-path in $B_{4^{j-1} \rho}(0)$ from $0$ to $y$.}  
  \end{aligned}
\right. 
\end{equation}
We fix $\rho \ge \sum \limits_{i=1}^{N}|v_i|$, and we suppose by contradiction that Claim A is false for $j+1$ and this choice of $\rho$.
Then there exists $x= y + k v_{j+1} \in G_{j+1}$ with $y \in G_{j}$ and $k \in \Z$ such that 
\begin{equation}
  \label{eq:assumption-inequalities}
|x|< \rho+ \frac{1}{2} \sum_{i=1}^{j}|v_i|, \qquad \dist(x,\cW_{j})< \rho,
\end{equation}
and such that there does not exist a $G$-path in 
$B_{4^{j}\rho}(0)$ from $0$ to $x$. Without loss we may assume that $k \ge 0$, and that $k$ is chosen minimally with this property. 
In the case $k=0$ we have $x=y$ and thus 
\begin{equation*}
|x|< \rho+ \frac{1}{2} \sum_{i=1}^{j}|v_i| \le 2\rho , \qquad \dist(x,\cW_{j-1}) \le |x|< 2\rho,
\end{equation*}
so that by (\ref{eq:induction-hypothesis}) -- applied with $2 \rho$ in place of $\rho$ --there exists a $G$-path in $B_{4^{j-1} 2\rho}(0) \subset B_{4^j \rho}(0)$ from $0$ to $x$. This contradicts our choice of $x$, and thus we have $k>0$. 

Let $x_1=y+(k-1)v_{j+1}$, and let $x_1^* \in \cW_{j}$ be the orthogonal projection of $x_1$ on $\cW_{j}$, so that 
\begin{equation}
  \label{eq:orthogonal-ineq}
|x_1^*| \le  |x_1| \qquad \text{and}\qquad |x_1-x_1^*| =\dist(x_1,\cW_{j}) \le |x_1|.
\end{equation}
Since the sets $\bigl\{y_*+ \sum\limits_{i=1}^{j}\alpha_iv_i\;:\; -\frac{1}{2} \leq \alpha_i\leq \frac{1}{2}\text{ for $i=1,\ldots, j$ }\bigr\}$, $y_*\in G_{j}$
cover $\cW_{j}$, there exists $y^* \in G_{j}$ and $\alpha_i \in [-\frac{1}{2},\frac{1}{2}]$ with $x_1^* = y^* + \sum \limits_{i=1}^{j} \alpha_i v_i$. The point $x':=x_1- y^*$ then satisfies 
\begin{align}
\dist(x',\cW_{j})=\dist(x_1,\cW_{j})&= (k-1) \, \dist(v_{j+1},\cW_{j}) \nonumber\\
& \le k \,\dist(v_{j+1},\cW_{j})= \dist(x,\cW_{j}) < \rho \label{appendix-add-ineq}  
\end{align}
and, by (\ref{eq:orthogonal-ineq}) and (\ref{appendix-add-ineq}),
\begin{equation}
  \label{eq:estimate-x-prime}
|x'|= |x_1-y^*| \le |x_1-x_1^*| + |x_1^*-y^*| \le  \dist(x_1,\cW_{j}) + \frac{1}{2}\sum_{i=1}^{j}|v_i| < \rho +  \frac{1}{2}\sum_{i=1}^{j}|v_i|.
\end{equation}
By the minimality property of $k$, this implies the existence of a $G$-path $\Gamma_1$ from $0$ to $x'$ in $B_{4^{j}\rho}(0)$. Moreover, by (\ref{eq:assumption-inequalities}) and (\ref{eq:orthogonal-ineq}), 
$$
|y^*| \le |x_1^*| +  \frac{1}{2}\sum_{i=1}^{j}|v_i| \le |x_1| +  \frac{1}{2}\sum_{i=1}^{j}|v_i| \le |x|+|v_{j+1}| + \frac{1}{2}\sum_{i=1}^{j}|v_i| < \rho + \sum_{i=1}^{j+1}|v_j| \le 2 \rho,
$$
and thus also $\dist(y^*,\cW_{j-1}) < 2\rho$. Thus (\ref{eq:induction-hypothesis}) -- applied with $2 \rho$ in place of $\rho$ -- yields a $G$-path from $0$ to $y_*$ in $B_{4^{j-1}2\rho}(0)$. By mere translation, this path gives rise to $G$-path $\Gamma_2$ from $x' = x_1 - y^*$ to $x_1$ in $B_{4^{j-1}2\rho}(x')$, whereas 
$$ 
B_{4^{j-1}2\rho}(x') \subset B_{4^{j} \rho}(0) \qquad \text{since $|x'| \le 2 \rho$ by (\ref{eq:estimate-x-prime}).}   
$$
Composing $\Gamma_1$ and $\Gamma_2$, we then get a $G$-path from $0$ to $x_1$ in $B_{4^{j}\rho}(0)$. Simply adding $x=y+ k v_j$ as an endpoint and using that $x \in B_{4^{j}\rho}(0)$ by assumption, we finally obtain a $G$-path from $0$ to $x$ in $B_{4^{j}\rho}(0)$. This contradicts our assumption and finishes the proof of Claim A for $j+1$.

By induction, the proof of Claim A is thus finished. Applying Claim A with $j=N$ and noting that 
$\dist(x,\cW_{N-1}) \le |x|$ for every $x \in G$, we finally deduce the claim of the lemma. 
\end{proof}

\bibliographystyle{amsplain}

\end{document}